\newtheorem{proposition}{Proposition}[section]
\newtheorem{theorem}{Theorem}[section]
\newtheorem{lemma}[proposition]{Lemma}
\newtheorem{corollary}[proposition]{Corollary}
\newtheorem{definition}{Definition}[section]
\newtheorem{remark}{Remark}
\newcommand\restr[2]{{
		\left.\kern-\nulldelimiterspace 
		#1 
		\vphantom{\big|}
		\right|_{#2} 
}}
\newenvironment{proof}{{\noindent \it Proof.}}{\hfill $\fbox{}$ \vspace*{5mm}}
\newcommand{\N}{\mathbb{N}}
\newcommand{\R}{\mathbb{R}}
\newcommand{\diag}{\mathop{\mathrm{diag}}}
\newcommand{\Eulerc}{\mathcal{L}_{\textnormal{dir},\alpha x^2}}
\newcommand{\Euler}{\mathcal{L}_{\textnormal{dir},\alpha x^2}^{(n)}}
\newcommand{\Eulerw}{\hat{\mathcal{L}}_{\textnormal{dir},\alpha x^2}^{(n)}}
\newcommand{\numerr}{\prescript{}{\alpha}{\textnormal{\textbf{err}}_{k}^{(n)}}}
\newcommand{\analerr}{\prescript{}{\alpha}{\textnormal{\textbf{err}}_{r,k}^{*,(n)}}}
\newcommand{\analerrExact}{\prescript{}{\alpha}{\textnormal{\textbf{err}}_{k}^{*,(n)}}}
\newcommand{\bfx}{\mathbf x}
\newcommand{\bfnn}{{\boldsymbol n}}\newcommand{\bfxx}{{\boldsymbol x}}\newcommand{\bfyy}{{\boldsymbol y}}
\newcommand{\bftheta}{{\boldsymbol\theta}}
\DeclareMathSymbol{\shortminus}{\mathbin}{AMSa}{"39}
\numberwithin{equation}{section}
\date{}
\begin{document}
	\title{Analysis of the spectral symbol associated to discretization schemes of linear self-adjoint differential operators\footnote{This is a preprint.}}
	\author{Davide Bianchi\footnote{University of Insubria - Como (Italy). Email: d.bianchi9@uninsubria.it} \textsuperscript{ ,}\footnote{Partially supported by INdAM-GNCS Gruppo Nazionale per il Calcolo Scientifico}}
	\maketitle
	
	\begin{abstract}
Given a linear self-adjoint differential operator $\mathcal{L}$ along with a discretization scheme (like Finite Differences, Finite Elements, Galerkin Isogeometric Analysis, etc.), in many numerical applications it is crucial to understand how good the (relative) approximation of the whole spectrum of the discretized operator $\mathcal{L}^{(\bfnn)}$ is, compared to the spectrum of the continuous operator $\mathcal{L}$.  The theory of Generalized Locally Toeplitz sequences allows to compute the spectral symbol function $\omega$ associated to the discrete matrix $\mathcal{L}^{(\bfnn)}$. 

We prove that the symbol $\omega$ can measure, asymptotically, the maximum spectral relative error $\mathcal{E}\geq 0$. It measures how the scheme is far from a good relative approximation of the whole spectrum of $\mathcal{L}$, and it suggests a suitable (possibly non-uniform) grid such that, if coupled to an increasing refinement of the order of accuracy of the scheme, guarantees $\mathcal{E}=0$. 
\end{abstract}

\section{Introduction}\label{sec:intro}

Denoting by $\mathcal{L}^{(\bfnn)}$ the $d_\bfnn\times d_\bfnn$ matrix that discretizes a linear self-adjoint differential operator $\mathcal{L}$, obtained by a discretization scheme like Finite Differences (FD), Finite Elements (FE), Isogeometric Galerkin Analysis (IgA), etc., several problems require that the spectrum of $\mathcal{L}^{(\bfnn)}$ converges to the (point) spectrum of the operator $\mathcal{L}$ uniformly with respect to $k=-d^-_\bfnn,\cdots,d^+_\bfnn$ (where $d^+_\bfnn$ is the sum of the null and the positive indices of inertia of $\mathcal{L}^{(\bfnn)}$ and $d^-_\bfnn$ is the negative index of inertia of $\mathcal{L}^{(\bfnn)}$), i.e., that
\begin{equation}\label{max_rel_spectrum}
\mathcal{E}:= \limsup_{\bfnn \to \infty}\mathcal{E}_\bfnn=0, \qquad  \mathcal{E}_\bfnn:=\max_{\substack{k=-d^-_\bfnn,\ldots, d^+_\bfnn\\k\neq 0}}\left|\frac{\lambda_k^{(\bfnn)}}{\lambda_k}-1\right|, 
\end{equation}
 where $\bfnn$ is the mesh finesses parameter, and $\lambda_{k}^{(\bfnn)}$ and $\lambda_{k}$ are the $k$-th positive/negative eigenvalues of the discrete and the continuous operator, respectively, sorted in increasing order. Typically, $\mathcal{E}>0$: the relative error estimates for the eigenvalues and eigenfunctions are good only in the \emph{lowest modes}, that is, for $|k|=1,\ldots, K_\bfnn$, where $K_\bfnn/d_\bfnn\to\sigma$ as $\bfnn\to \infty$ and such that $\sigma <<1$. In general, a large portion of the eigenvalues, the so-called \emph{higher modes}, are not approximations of their continuous counterparts in any meaningful sense. This may negatively affects the solutions obtained by discrete approximations of elliptic boundary-value problems, or parabolic and
 hyperbolic initial value problems. In these cases, all modes may contribute in the solution to some extent and inaccuracies in the approximation of higher modes can not always be ignored.
 
 Regarding this, see for example the spectral-gap problem of the one dimensional wave equation for the uniform observability of the control waves, \cite{IZ99,MZ15,EMZ16,BS18}, or structural engineering problems, see \cite[sections 3--6]{HER}.

In this setting, the theory of Generalized Locally Toeplitz (GLT) sequences provides the necessary tools to understand whether the methods used to discretize the operator $\mathcal{L}$ are effective in approximating the whole spectrum. The GLT theory originated from the seminal work of P. Tilli on Locally Toeplitz sequences \cite{Ti98} and later developed by S. Serra-Capizzano in \cite{Serra03,Serra06}. It was devised to compute and analyze the spectral distribution of matrices arising from the numerical discretization of integral equations and differential equations. 

Let $\hat{\mathcal{L}}^{(\bfnn)}$ be the discrete operator suitably weighted by a power of $d_\bfnn$, which depends on the dimension of the underlying space and on the maximum order of derivatives involved. It usually happens that the sequence of matrices $\left\{ \hat{\mathcal{L}}^{(\bfnn)} \right\}_{\bfnn}$ enjoys an asymptotic spectral distribution as $\bfnn\to \infty$, i.e., as the mesh goes to zero. More precisely, for any test function $F(t) \in C_c(\mathbb{C})$ it holds that
 \begin{equation*}\label{eq:spectral_symbol_intro}
 \lim_{\bfnn \to \infty} \frac{1}{d_\bfnn} \sum_{\substack{k=-d^-_\bfnn\\k\neq 0}}^{d^+_\bfnn}F\left(\lambda_k\left(\hat{\mathcal{L}}^{(\bfnn)}\right)\right) = \frac1{m(D)}\int_D  F(\omega(\boldsymbol{y}))m(d\boldsymbol{y}),
 \end{equation*}
where $\lambda_k\left(\hat{\mathcal{L}}^{(\bfnn)}\right), k=-d^-_\bfnn,\ldots,d^+_\bfnn$ are the eigenvalues of the weighted operator $\hat{\mathcal{L}}^{(\bfnn)}$ and $\omega: D\subset \R^M \to \mathbb{C}$ is referred to as the {\em spectral symbol} of the sequence $\left\{\hat{\mathcal{L}}^{(\bfnn)} \right\}_{\bfnn}$, see \cite[Equation (3.2)]{Tyrty96} in relation to Toeplitz matrices. %With abuse of notation we will often call $\omega$ the  spectral symbol of the unweighted discrete matrix operator $\mathcal{L}^{(\bfnn)}$.  

The GLT theory allows to compute the spectral symbol $\omega$ related to $\mathcal{L}^{(\bfnn)}$, especially if the numerical method employed to produce $\mathcal{L}^{(\bfnn)}$ belongs to the family of the so-called {\em local methods}, such as FD methods,
FE methods and collocation methods  with locally supported basis functions.

The symbol $\omega$ can measure the maximum spectral relative error $\mathcal{E}$ defined in \eqref{max_rel_spectrum} and it suggests a suitable (non-uniform) grid such that, if coupled to an increasing refinement of the order of accuracy of the scheme, guarantees $\mathcal{E}=0$.

Moreover, in several recent papers the sampling of the spectral symbol was suggested to be used to approximate the spectrum of the discrete matrix operators $\hat{\mathcal{L}}^{(\bfnn)}$ and $\mathcal{L}^{(\bfnn)}$. Unfortunately, this approach is not always successful in general, as we will show with an example. The main reference is the paper \cite{GSERSH18} which reviews the state-of-the-art of the symbol-based analysis for the eigenvalues distribution carried on in the framework of the isogeometric Galerkin approximation (IgA).

The paper is organized as follows. 
\begin{itemize}

	\item In Section \ref{sec:GLT}, the spectral symbol $\omega$ and the monotone rearrangement $\omega^*$ are introduced. In Section \ref{sec:asymptotic_distribution} we prove an asymptotic result, Theorem \ref{thm:discrete_Weyl_law}, which connects the eigenvalue distribution of a matrix sequence and the monotone rearrangement of its spectral symbol. It is the main tool for the results in Section \ref{sec:example} and Appendix \ref{sec:theory}. In particular, under suitable regularity assumptions on $\omega$, we prove that 
\begin{equation}\label{eq:asymptotic_comparison}
\mathcal{E}=\sup_{\substack{x\in(0,1)\\x:\zeta^{*}(x)\neq0}}\left| \frac{\omega^{*}(x)}{\zeta^{*}(x)} -1\right|,
\end{equation}
	where $\zeta$ is the (weighted) Weyl distribution function of the eigenvalues of $\mathcal{L}$ and $\omega$ is the spectral symbol associated to the numerical scheme applied to discretize $\mathcal{L}$, see Theorem \ref{thm:necessary_cond_for_uniformity} and Section \ref{ssec:application_to_self-adjoint_operators}.
	
	\item Section \ref{sec:example} is devoted to numerical experiments: the validity of \eqref{eq:asymptotic_comparison} is shown in Table \ref{table:maximum_rel_error} and Figure \ref{fig:eig_symbol_comparison}. Moreover, in Subsection \ref{ssec:example_uniform_3_points}  we provide an example about the unfeasibility to obtain an accurate approximation of the eigenvalues of a differential operator by just uniformly sampling the spectral symbol $\omega$.
	
	In Subsection \ref{ssec:FD_nonuniform} and Subsection \ref{ssec:galerkin} we generalize the results in the previous subsection to the case of central FD and IgA methods of higher order. By means of a suitable non-uniform grid, suggested by the spectral symbol and combined by an increasing order of the approximation, we obtain \eqref{max_rel_spectrum}, namely $\mathcal{E}=0$ .  
	
	In Subsection \ref{ssec:generalization}, we describe an application of our results to a class of linear self-adjoint elliptic differential operators on bounded domains.	
\end{itemize}

%Finally, in Section \ref{sec:hypothesis} we try to give an explanation concerning the reason why the sampling of the spectral symbol $\omega$ fails, in general, to approximate the eigenvalues of the continuous differential operator $\mathcal{L}$, comparing the relation \eqref{eq:spectral_symbol_intro} with the (generalized) Kolmogorov Theorem and making use of the perturbation theory. 

\subsection{About the notations}\label{ssec:notations}
We will write in bold all multi-dimensional variables and vector-valued functions. Given an integer $d\geq 1$, a $d$-index $\bfnn$ is an element of $\mathbb{N}^d$, that is, $\bfnn=\left(n_1, \ldots, n_d\right)$ with $n_j \in \mathbb{N}$ for every $j=1,\ldots, d$. Throughout this paper $\mathbb{N}^d$ will be endowed with the lexicographic ordering. We write $\bfnn \to \infty$ meaning that $\min_{j=1,\ldots,d}\{n_j\}\to \infty$.
	
 Given a $d$-index $\bfnn$, we let $d_\bfnn:= \prod_{j=1}^d n_j$. We will use the notation $A^{(\bfnn)},B^{(\bfnn)},\ldots$ to denote general $d_\bfnn\times d_\bfnn$ square matrices. When we will confront the spectrum of two sequences of matrices, we will assume that they have the same dimension. In the case $d=1$, $T^{(n)}$ will denote a Toeplitz matrix, i.e., a matrix with constant coefficients along its diagonals: $\left(T^{(n)}\right)_{i,j}= t_{i-j}$ for all $i,j=1,\ldots,d_n$, and with $\mathbf{t}=\left[t_{-d_n+1}, \ldots, t_0, \ldots, t_{d_n-1}\right] \in \mathbb{C}^{2d_n-1}$. If $t_k$ is the $k$-th Fourier coefficient of a complex integrable function $f$ defined over the interval $[-\pi, \pi]$, then $T^{(n)}= T^{(n)}(f)$ is referred to be the Toeplitz matrix generated by $f$.
 
 We will write $\mathcal{L}^{(\bfnn,\eta)}$ to denote a $d_\bfnn\times d_\bfnn$ square matrix which is the discretization of a linear differential operator $\mathcal{L}$ by means of a numerical scheme of order of approximation $\eta$. In the case where the approximation order $\eta$ is clear by the context, then we will omit it. If the discretized operator $\mathcal{L}^{(\bfnn)}$ is weighted by a constant depending on the finesses mesh parameter $\bfnn$, then we will denote it with $\hat{\mathcal{L}}^{(\bfnn)}$. We will use the subscripts {\em dir} and {\em BCs} to denote a (discretized) linear differential operator characterized with Dirichlet or generic boundary conditions, respectively. When it will be necessary to highlight the dependency of the differential operator on a variable coefficient $p(\bfxx)$, we will write it as subscript. So, for example, the weighted discretization of a diffusion operator with Dirichlet BCs by means of the IgA scheme of order $\eta$ will be denoted by 
 $$
 \hat{\mathcal{L}}^{(\bfnn,\eta)}_{\textnormal{dir},p(\bfxx)}.
 $$
 In the special case of the (negative) Laplace operator we will use the symbol $\Delta:= -\sum_{i=1}^d \partial^2_{x^2_i}$, and all the previous notation will apply.
 
 We will consider all the Euclidean spaces equipped with the usual Lebesgue measure $m(\cdot)$. We will not specify the dimension of $m(\cdot)$ by a subscript since it will be clear from the context.  
  
 We will use the letter $c$ for all the constants, making explicit the dependency on other parameters if needed.
 
 Finally, for any fixed $k,j \in \mathbb{N}\setminus\{0\}$ we will denote with $\lambda_{k}$ and $\lambda_{-j}$ the $k$-th non-negative and the $j$-th negative real eigenvalues of a given operator, respectively. Given a $d_\bfnn\times d_\bfnn$ matrix $X^{(\bfnn)}$ with $d_\bfnn$ real eigenvalues, we will denote with $d^+_\bfnn$ the sum of the null and positive indices of inertia  and with $d^-_\bfnn$ the negative index of inertia, i.e., 
 $$
 d^+_\bfnn:= \left|\left\{\lambda\left(X^{(\bfnn)}\right) \, : \, \lambda\left(X^{(\bfnn)}\right)\geq 0  \right\}  \right|, \qquad d^-_\bfnn:= \left|\left\{\lambda\left(X^{(\bfnn)}\right) \, : \, \lambda\left(X^{(\bfnn)}\right)< 0  \right\}  \right|.
 $$
  Clearly, $d_\bfnn = d^+_\bfnn + d^-_\bfnn$. The eigenvalues will be sorted in non-decreasing order, that is
  $$
  \ldots \leq \lambda_{-j} \leq \ldots \leq \lambda_{-1}< 0 \leq \lambda_1 \leq \ldots \lambda_k \leq \ldots 
  $$

\section{Spectral symbol and monotone rearrangement}\label{sec:GLT}
In this section we provide the definitions of spectral symbol of a sequence of matrices and its monotone rearrangement, which are the main tools we will use throughout this paper to study the asymptotic spectral distribution of sequences of discretization matrices.  
\subsection{Spectral symbol}

Hereafter, with the symbol $\left\{X^{(\bfnn)}\right\}_\bfnn$ we will denote a sequence of square matrices with increasing dimensions $d_\bfnn\times d_\bfnn$, i.e., such that $d_\bfnn \to \infty$ as $\bfnn \to \infty$. The following definition of spectral symbol has been slightly modified in accordance to our notation and purposes.

\begin{definition}[Spectral symbol]\label{def:ss_def}
	Let $\left\{X^{(\bfnn)}\right\}_\bfnn$ be a sequence of matrices and let $\omega:D\subset \R^M\to\mathbb{K}$ ($\mathbb{K}=\R$ or $\mathbb{C}$) be a Borel-measurable function and $D$ a Borel with $0<m(D)<\infty$, such that the Lebesgue integral 
	$$
	\iint_{D} \omega(\bfyy)m(d\bfyy)
	$$
exists, finite or infinite. We say that $\{X^{(\bfnn)}\}_\bfnn$ is distributed like $\omega$ in the sense of the
	eigenvalues, in symbols $\{X^{(\bfnn)}\}_\bfnn \sim_\lambda \omega$, if
	\begin{equation}\label{def_asym-bis-Matrix}
\lim_{\bfnn\rightarrow \infty}\frac{1}{d_\bfnn}\sum_{\substack{k=-d^-_\bfnn\\k\neq 0}}^{d^+_\bfnn}F\left(\lambda_k\left(X^{(\bfnn)}\right)\right)=\frac{1}{m(D)}\iint_{D}  F\left(\omega(\bfyy)\right)m(d\bfyy),\qquad\forall F\in C_c(\mathbb K).
	\end{equation}
	We will call $\omega$ the \textnormal{(spectral) symbol} of $\{X^{(\bfnn)}\}_\bfnn$.
\end{definition}

Relation \eqref{def_asym-bis-Matrix} is satisfied for example by Hermitian Toeplitz matrices generated by real-valued functions $\omega\in \textnormal{L}^1([-\pi,\pi])$, i.e., $\left\{T^{(n)}(\omega) \right\}_n\sim_\lambda \omega$, see \cite{Szego84,Tyrty96,Tyrty98}. For a general overview on Toeplitz operators and spectral symbol, see \cite{BG00,BS13}. What is interesting to highlight is that matrices with a Toeplitz-like structure naturally arise when discretizing, over a uniform grid, problems which have a translation invariance property, such as linear differential operators with constant coefficients.

\begin{remark}\label{rem:ssymbol_sampling}
In particular, if $D$ is compact, $\omega$ continuous, and $\lambda_k \in R_\omega= \left[\min \omega, \max \omega\right]$ for every $k\in \N$, then 
 taking $F(t)=t\chi_{R_\omega}(t)$, with $\chi_{R_\omega}$ a $C^\infty$ cut-off such that $\chi_{R_\omega}(t)\equiv 1$ on $R_\omega$, it holds that
	\begin{equation}\label{ss_def2}
	\lim_{\bfnn \to \infty} \frac{1}{d_\bfnn} \sum_{\substack{k=-d^-_\bfnn\\k\neq 0}}^{d^+_\bfnn}\lambda_k\left(X^{(\bfnn)}\right) = \frac{1}{m(D)}\iint_{D} \omega(\bfyy)m(d\bfyy).
	\end{equation}
	Because the Riemannian sum over equispaced points converges to the integral of the right hand side of the above formula, then \eqref{ss_def2} could suggest that the eigenvalues $\lambda_k\left(X^{(\bfnn)}\right)$ can be approximated by a pointwise evaluation of the symbol $\omega(\bfyy)$ over an equispaced grid of $D$, for $\bfnn\to \infty$, expect for at most an $o(d_\bfnn)$ of outliers, see Definition \ref{def:outliers}. This is mostly the content of \cite[Remark 3.2]{GS17} and \cite[Section 2.2]{GSERSH18}. 

We remark that in the special cases of Toeplitz matrices generated by enough regular functions $\omega$, 
$$
\lambda_k\left(T^{(n)}(\omega)\right)= \omega\left(\frac{k\pi}{d_n+1}\right) + O(d_n^{-1}) \qquad \mbox{for every } k=1,\cdots,d_n,
$$
see \cite{W58} and \cite{BBGM15}. In some sense, this justifies the informal meaning given above to the spectral symbol.
\end{remark}

That said, unfortunately the discretization of a linear differential operator does not always own a Toeplitz-like structure. Nevertheless, the GLT theory provides tools to prove the validity of \eqref{def_asym-bis-Matrix} for more general matrix sequences. For a review of the GLT theory we mainly refer to \cite{GS17,GS18} and all the references therein. We conclude this subsection with two definitions.

\begin{definition}[Essential range]\label{def:essential_range}
	Let $\omega$ be a real valued measurable function and define the set $R_\omega \subset \mathbb{R}$ as 
	\begin{equation*}
	t\in R_\omega \quad \Leftrightarrow \quad m\left(\left\{\boldsymbol{y} \in D \, : \, \left| \omega(\boldsymbol{y}) - t\right|< \epsilon   \right\}\right)>0 \quad \forall \epsilon >0.
	\end{equation*}
	We call $R_\omega$ the \emph{essential range} of $\omega$.  $R_\omega$ is closed.
\end{definition}

\begin{definition}[Outliers]\label{def:outliers}
	Given  a matrix sequence $\{X^{(\bfnn)}\}$ such that $\left\{X^{(\bfnn)}\right\}_{\bfnn} \sim_{\lambda} \omega$, if $\lambda_k\left( X^{(\bfnn)}\right)\notin R_\omega$ we call it an \emph{outlier}.
\end{definition}

\subsection{Monotone rearrangement}\label{ss:rearrangment}

Dealing with an univariate and monotone spectral symbol $\omega(\bfyy) = \omega(\theta)$ has several advantages. Unfortunately, in general $\omega$ is multivariate or not monotone. Nevertheless, it is possible to consider a rearrangement $\omega^*: (0,1) \to(\inf R_\omega, \sup R_\omega)$ such that $\omega^*$ is univariate, monotone nondecreasing and still satisfies the limit relation \eqref{def_asym-bis-Matrix}. This can be achieved in the following way.
\begin{definition} 
	Let $\omega : D \subset \R^M \to \R$ be Borel-measurable. Define
\begin{equation}\label{eq:rearrangment}
\omega^* :  (0,1)\to R_\omega, \qquad\omega^*(x) = \inf\left\{ t \in R_\omega\,:\, \phi_\omega(t)> x\right\} 
\end{equation}
where 
\begin{equation}\label{eq:rearrangment2}
\phi_\omega : \R \to [0,1], \qquad \phi_\omega(t) := \frac{1}{m(D)}m \left(\left\{\bfyy \in D \, : \, \omega(\bfyy) \leq t  \right\}\right),
\end{equation}
and where, in case of bounded $R_\omega$, we consider the extension $\omega^*: [0,1] \to R_\omega$,  see Theorem \ref{thm:discrete_Weyl_law}.
\end{definition}
In Analysis, $\omega^*$ is called \emph{monotone rearrangement} (see \cite[pg. 189]{Stein71}) while in Probability Theory it is called (generalized) inverse distribution function (see \cite[pg. 260]{Taylor97}). For \textquotedblleft historical\textquotedblright reasons we will use the analysts' name, see \cite[Definition 3.1 and Theorem 3.3]{DBFS93} and \cite{Serra98}, where the monotone rearrangement were first introduced in the context of spectral symbol functions.  Clearly, $\omega^*$ is a.e. unique, univariate, and monotone increasing, which make it a good choice for an equispaced sampling. On the other hand, $\omega^*$ could not have an analytical expression or it could be not feasible to compute, therefore it is often needed an approximation $\omega^*_r$. Hereafter we summarize the steps presented in \cite[Example 10.2]{GS17} and \cite[Section 3]{GSERSH18} in order to approximate the eigenvalues $\lambda_k\left(T^{(n)}\right)$ by means of an equispaced sampling of the rearranged symbol $\omega^*$ (or its approximated version $\omega^*_r$). For the sake of clarity and without loss of generality, we suppose $d=1$, $D=[0,1]\times [-\pi,\pi]$ and $\omega$ continuous.
\vspace{0.3cm}

\noindent\textbf{Algorithm}%\label{alg:omega} 
	\begin{enumerate}[1)]
		\item Fix $r \in \N$ such that $r=r(n)\geq d_n$, and fix the equispaced grid $\{(x_i, \theta_j)\}$ over $[0,1]\times[-\pi,\pi]$, where $x_i= \frac{i}{r+1}$, $\theta_j = -\pi + \frac{j2\pi}{r+1}$ for $i,j=1,\cdots, r$;  
		\item Get the set of samplings $\left\{ \omega(x_i,\theta_j) \right\}_{i,j=1}^r$ and form a nondecreasing sequence $\{\omega_1\leq \omega_2\leq \cdots\leq \omega_{r^2} \}$;
		\item Define $\omega^*_r :[0,1] \to [\min\omega, \max\omega]$ as the piecewise linear nondecreasing function which interpolates the samples $\{\min\omega=\omega_0\leq\omega_1\leq \omega_2\leq \cdots\leq \omega_{r^2}\leq\omega_{r^2+1}=\max\omega \}$ over the nodes $\{0,\frac{1}{r^2+1},\frac{2}{r^2+1},\cdots,\frac{r^2}{r^2+1},1\}$;
		\item Sample $\omega^*_r$ over the set $\left\{ \frac{k}{d_n+1}\right\}_{k=1}^{d_n}$ and define $\omega^{*,(n)}_{r,k}:= \omega^*_r\left( \frac{k}{d_n+1}\right)$. Obviously, if $\omega^*$ is available then use it instead of $\omega^*_r$ and define $
		\omega^{*,(n)}_{k}:= \omega^*\left( \frac{k}{d_n+1}\right)$.
	\end{enumerate}

As standard result in approximations of monotone rearrangements, it holds that $\| \omega^*_{r(n)} - \omega^* \|_\infty \to 0$ as $n\to \infty$, see \cite{CP79,T86}. 

\section{Asymptotic spectral distribution}\label{sec:asymptotic_distribution}
In this section we prove one of the main results about the asymptotic spectral distribution of the eigenvalues of a matrix sequence with given spectral symbol. 

\begin{definition}[Vague convergence]\label{def:vague_conv}
A measure $\mu(\cdot)$ on $\left(\R, \mathcal{B}\right)$, with $\mathcal{B}$ the Borel set of $\R$, is said a \emph{subprobability measure} (s.p.m) if $\mu(\R)\leq 1$. A sequence $\{\mu_n\}_n$ of s.p.m is said to \emph{converge vaguely} to a s.p.m. $\mu$ iff for every $a<b$ such that $\mu((a,b))=\mu([a,b])$, then
$$
\lim_{n \to \infty}\mu_n((a,b]) = \mu((a,b]),
$$ 	
and we write $\mu_n \xrightarrow{v} \mu$. See the definition given in \cite[p. 85 and Theorem 4.3.1]{Chung01}.
\end{definition}

It holds the following result (\cite[Theorem 4.4.1]{Chung01}).
\begin{proposition}\label{prop:vague_conv}
Let $\{\mu_n\}_n$ and $\mu$ be s.p.m. Then $\mu_n \xrightarrow{v} \mu$ iff 
\begin{equation*}
\lim_{n \to \infty} \int_{\R} F(t) \mu_n(dt) = \int_{\R} F(t) \mu(dt) \qquad \mbox{for every } F\in C_c(\R).
\end{equation*}
\end{proposition}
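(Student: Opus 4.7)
The plan is to prove both implications by a standard sandwich/approximation argument between indicator functions of intervals (whose boundaries are $\mu$-continuity points) and continuous compactly supported test functions.

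For the forward direction, I would assume $\mu_n \xrightarrow{v} \mu$ and fix $F \in C_c(\mathbb{R})$ with $\mathrm{supp}(F) \subset [-N,N]$. The key observation is that the set $J_\mu := \{a \in \mathbb{R} : \mu(\{a\}) > 0\}$ is at most countable, so that for every $\epsilon > 0$ I can choose a partition $-N = a_0 < a_1 < \cdots < a_m = N$ with $a_i \notin J_\mu$ (equivalently $\mu((a_{i-1},a_i)) = \mu([a_{i-1},a_i])$) and mesh so fine that the oscillation of the uniformly continuous $F$ on each subinterval is less than $\epsilon$. Setting $F_\epsilon := \sum_{i=1}^m F(a_i)\chi_{(a_{i-1},a_i]}$, vague convergence gives $\int F_\epsilon\, d\mu_n \to \int F_\epsilon\, d\mu$ term by term, and since $\|F - F_\epsilon\|_\infty \leq \epsilon$ with $\mu_n(\mathbb{R}),\mu(\mathbb{R}) \leq 1$, one concludes $\limsup_n |\int F\, d\mu_n - \int F\, d\mu| \leq 2\epsilon$, and $\epsilon \to 0$ finishes this direction.

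For the converse, I would assume the integral convergence and pick $a < b$ with $\mu((a,b)) = \mu([a,b])$, i.e.\ $\mu(\{a\}) = \mu(\{b\}) = 0$. The trick is to sandwich $\chi_{(a,b]}$ between two continuous compactly supported bump functions: for $\delta > 0$ choose $F_\delta^-, F_\delta^+ \in C_c(\mathbb{R})$ with
$$
F_\delta^- \leq \chi_{(a,b]} \leq F_\delta^+, \qquad F_\delta^- \equiv 1 \text{ on } [a+\delta, b-\delta], \qquad F_\delta^+ \equiv 1 \text{ on } [a-\delta,b+\delta],
$$
and both supported in $[a-2\delta, b+2\delta]$. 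Then $\int F_\delta^- d\mu_n \leq \mu_n((a,b]) \leq \int F_\delta^+ d\mu_n$; letting $n \to \infty$ via the hypothesis gives
$$
\int F_\delta^- d\mu \;\leq\; \liminf_{n} \mu_n((a,b]) \;\leq\; \limsup_{n} \mu_n((a,b]) \;\leq\; \int F_\delta^+ d\mu.
$$
Finally, as $\delta \to 0^+$, dominated convergence together with $\mu(\{a\}) = \mu(\{b\}) = 0$ forces both bracketing integrals to converge to $\mu((a,b])$, yielding vague convergence.

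I do not expect genuine obstacles here; the only point that requires mild care is ensuring the partition points in the forward direction avoid the countable set of atoms of $\mu$, and choosing the continuous sandwich in the converse so that the $\delta \to 0$ limit is controlled only via $\mu$-continuity at the two endpoints (rather than on the entire interval). Since the statement is explicitly attributed to Chung's textbook, the argument is classical and the role of the proposition in the paper is only to translate the vague-convergence language of Definition \ref{def:vague_conv} into the integral form that matches Definition \ref{def:ss_def}.
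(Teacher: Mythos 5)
Your proof is correct. Note that the paper itself gives no proof of this proposition --- it is quoted verbatim from \cite[Theorem 4.4.1]{Chung01} --- and your argument is precisely the classical one: approximation of $F$ by a step function built over $\mu$-continuity points for the forward implication (the countability of the atom set $J_\mu$ guaranteeing such a partition exists, and the bound $\mu_n(\R),\mu(\R)\leq 1$ turning the uniform error $\epsilon$ into a $2\epsilon$ bound on the integrals), and sandwiching $\chi_{(a,b]}$ between continuous compactly supported bumps for the converse, with $\mu(\{a\})=\mu(\{b\})=0$ collapsing the two bracketing limits onto $\mu((a,b])$. The only detail worth making fully explicit is that the outer partition points $\pm N$ may themselves lie in $J_\mu$, in which case one first enlarges the support interval to $[-N',N']$ with $\pm N'\notin J_\mu$ before partitioning; you already flagged this, so there is no gap.
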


Proposition \ref{def:vague_conv} and Definition \ref{def:ss_def} are connected. Let us set the atomic measure $\delta_x : \mathcal{B} \to \{0,1\}$ as 
$$
\delta_x (B):=\begin{cases}
1 & \mbox{if } x \in B,\\
0 & \mbox{if } x \notin B,
\end{cases}
$$
and define 
\begin{equation}\label{eq:discrete_measure}
\mu_\bfnn(\cdot):=\frac{1}{d_\bfnn}\sum_{\substack{k=-d^-_\bfnn\\k\neq 0}}^{d^+_\bfnn} \delta_{\lambda_k\left(X^{(\bfnn)}\right)}(\cdot).
\end{equation}
\begin{lemma}\label{lem:equivalent_definition_distribution}
Let $\{X^{(\bfnn)}\}_\bfnn$ be a matrix sequence and $\omega : D \subset \R^M \to \R$ be a Borel-measurable function such that $D$ is a Borel set with $0<m(D)<\infty$, and such that the Lebesgue integral $\int_D \omega(\bfyy) m(d\bfyy)$ exists, finite or infinite. Then $\{X^{(\bfnn)}\}_\bfnn \sim_\lambda \omega$ iff $\mu_\bfnn \xrightarrow{v} \mu_\omega$, where $\mu_\omega$ is the probability measure on $\R$ associated to $\phi_\omega$ defined in \eqref{eq:rearrangment2}, i.e., such that $\mu_\omega((-\infty,t])=\phi_\omega(t)$.
\end{lemma}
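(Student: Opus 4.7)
The plan is to reduce the statement to an equality of integrals via Proposition \ref{prop:vague_conv}. First I would confirm that both $\mu_\bfnn$ and $\mu_\omega$ are legitimate probability measures on $(\R, \mathcal{B})$. The discrete measure $\mu_\bfnn$ defined in \eqref{eq:discrete_measure} is a convex combination of $d_\bfnn$ Dirac atoms of equal weight $1/d_\bfnn$, so $\mu_\bfnn(\R) = 1$. For $\mu_\omega$, I would check that $\phi_\omega$ in \eqref{eq:rearrangment2} is a bona fide cumulative distribution function: nondecreasing by monotonicity of $m$, right-continuous by continuity of $m$ along decreasing intersections $\{\omega \le t\} = \bigcap_n \{\omega \le t+1/n\}$, with $\phi_\omega(-\infty) = 0$ and $\phi_\omega(+\infty) = 1$ since $\omega$ is real-valued and $m(D) < \infty$. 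Hence $\phi_\omega$ uniquely extends to a probability measure $\mu_\omega$ on $\mathcal{B}$.

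Next I would identify $\mu_\omega$ as the pushforward measure $\omega_\ast(m/m(D))$. Indeed, for every $t \in \R$,
$$
\omega_\ast(m/m(D))\bigl((-\infty, t]\bigr) = \frac{1}{m(D)}\, m\bigl(\{\bfyy \in D \,:\, \omega(\bfyy) \le t\}\bigr) = \phi_\omega(t) = \mu_\omega((-\infty,t]),
$$
so the two probability measures coincide on the $\pi$-system of half-lines and therefore on all Borel sets. The standard change-of-variables formula for pushforward measures then gives
$$
\int_\R F(t)\, \mu_\omega(dt) = \frac{1}{m(D)}\iint_D F\bigl(\omega(\bfyy)\bigr)\, m(d\bfyy)
$$
for every bounded Borel $F$. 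In particular this is well-defined for any $F \in C_c(\R)$, independently of whether $\int_D \omega\, dm$ is finite or infinite, since $F \circ \omega$ is bounded and $m(D) < \infty$.

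With these two identifications in hand, the equivalence is immediate. By Proposition \ref{prop:vague_conv} applied to the (sub)probability measures $\mu_\bfnn$ and $\mu_\omega$, vague convergence $\mu_\bfnn \xrightarrow{v} \mu_\omega$ amounts to
$$
\lim_{\bfnn\to\infty} \int_\R F(t)\, \mu_\bfnn(dt) = \int_\R F(t)\, \mu_\omega(dt), \qquad \forall\, F \in C_c(\R).
$$
Substituting the atomic expression \eqref{eq:discrete_measure} on the left, which evaluates to $\frac{1}{d_\bfnn}\sum_{k \ne 0} F(\lambda_k(X^{(\bfnn)}))$, and the change-of-variables identity on the right, one recovers \eqref{def_asym-bis-Matrix} verbatim, which is exactly the defining relation of $\{X^{(\bfnn)}\}_\bfnn \sim_\lambda \omega$ in Definition \ref{def:ss_def}. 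Both directions follow at once.

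The whole proof is essentially definition-chasing, so there is no genuine obstacle; the only nontrivial ingredients are the standard facts that $\phi_\omega$ defines a probability measure (which uses $m(D)<\infty$ and the real-valuedness of $\omega$) and the change of variables under a pushforward. The subprobability framework of Definition \ref{def:vague_conv} is actually not needed here because both $\mu_\bfnn$ and $\mu_\omega$ are genuine probability measures; invoking Proposition \ref{prop:vague_conv} in this setting is harmless.
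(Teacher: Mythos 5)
Your proof is correct and follows essentially the same route as the paper: identify $\mu_\omega$ as the law of $\omega$ viewed as a random variable on $\left(D,\mathcal{B}\cap D,\frac{1}{m(D)}m(\cdot)\right)$, rewrite both sides of \eqref{def_asym-bis-Matrix} as integrals against $\mu_\omega$ and $\mu_\bfnn$, and conclude via Proposition \ref{prop:vague_conv}. The extra details you supply (that $\phi_\omega$ is a genuine CDF and the $\pi$-system identification of the pushforward) are just explicit versions of what the paper dismisses as ``standard theory.''
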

\begin{proof}
By hypothesis, it is immediate to verify that $\omega$ is a random variable on the probability space $\left(D, \mathcal{B}\cap D, \frac{1}{m(D)}m(\cdot)\right)$, and that $\phi_\omega$ is its distribution function. Therefore, by standard theory, for every $F \in C_c(\R)$ it holds that 
\begin{align*}
&\int_{\R} F(t)\mu_\omega(dt) =\frac{1}{m(D)}\int_{D}F(\omega(\bfyy))m(d\bfyy), \qquad\frac{1}{d_\bfnn}\sum_{\substack{k=-d^-_\bfnn\\k\neq 0}}^{d^+_\bfnn}F\left(\lambda_k\left(X^{(\bfnn)}\right)\right)=\int_{\R} F(t)\mu_\bfnn(dt).
\end{align*}
Then the thesis follows at once from Proposition \ref{prop:vague_conv} and Definition \ref{def:ss_def}. 
\end{proof}

\begin{definition}[Discrete eigenvalues counting function]
	Let $\{X^{(\bfnn)}\}_\bfnn$ be a matrix sequence such that $\{X^{(\bfnn)}\}_\bfnn \sim_\lambda \omega$ and define the (discrete) \emph{eigenvalues counting function} $N(X^{(\bfnn)},\cdot) : \R \to \{0,1,\ldots,d_\bfnn\}$,
	\begin{equation}\label{eq:discrete_eig_counting_function}
	N(X^{(\bfnn)},t) := \left|\left\{ k=-d^-_\bfnn,\ldots,d^+_\bfnn \, : \, \lambda_k \left(X^{(\bfnn)}\right) \leq t  \right\}  \right|.
	\end{equation}
	It holds that
	$$
\mu_\bfnn((-\infty,t])	= \frac{N(X^{(\bfnn)},t)}{d_\bfnn}.
	$$
\end{definition}

 We have the following asymptotic relations.
\begin{theorem}[Discrete Weyl's law]\label{thm:discrete_Weyl_law}
Let $\{X^{(\bfnn)}\}_\bfnn$ be a matrix sequence such that $\{X^{(\bfnn)}\}_\bfnn \sim_\lambda \omega$. Suppose that $m\left(\left\{\bfyy\,:\, \omega(\bfyy)=t\right\}\right)=0$ for every $t\in R_\omega$ (or equivalently, that $\phi_{\omega}$ is continuous). Then 
\begin{subequations} 
\begin{equation}\label{eq:weyl-SLLN}
 \left\{X^{(\bfnn)}\right\}_\bfnn \sim_\lambda \omega^*(x), \quad x\in (0,1);
\end{equation} 
\begin{equation}\label{eq:discrete_Weyl_law1}
\lim_{\bfnn \to \infty}\frac{N(X^{(\bfnn)},t)}{d_\bfnn} = \phi_\omega(t), \quad \forall\,t \in \R.
\end{equation}
\end{subequations}
Define now the index $\hat{k} \in \{1,\ldots, d_\bfnn\}$ as $\vartheta_{X^{(\bfnn)}}(k)$, where $\vartheta_{X^{(\bfnn)}} : \{- d^-_\bfnn\left(X^{(\bfnn)}\right), \ldots,  d^+_\bfnn\left(X^{(\bfnn)}\right)\}\to \{1,\ldots, d_\bfnn\}$ is such that
\begin{equation}\label{def:re-index}
\hat{k}=\vartheta_{X^{(\bfnn)}}(k):= \begin{cases}
k + d^-_\bfnn\left(X^{(\bfnn)}\right) +1 & \mbox{if } k <0,\\
k + d^-_\bfnn\left(X^{(\bfnn)}\right) & \mbox{if } k >0,
\end{cases}\qquad 
\end{equation}
and let $\hat{k}=\hat{k}(\bfnn)$ be such that $\hat{k}(\bfnn)/d_\bfnn \to x_0^- \in (0,1)$ as $\bfnn\to \infty$. Then 
\begin{equation}\label{eq:discrete_Weyl_law2_1}
\lim_{\bfnn \to \infty} \lambda_{\hat{k}(\bfnn)}\left(X^{(\bfnn)}\right)  \in \left(\inf R_\omega, \sup R_\omega\right), \quad \sup_{t \in R_\omega} \left\{ t\leq \lim_{\bfnn \to \infty} \lambda_{\hat{k}(\bfnn)}\left(X^{(\bfnn)}\right)  \right\} = \lim_{x\to x_0^-} \omega^*\left(x\right).
\end{equation}
In particular, if $\omega^*$ is (left) continuous in $x_0$, then
\begin{equation}\label{eq:discrete_Weyl_law2_2}
\lim_{\bfnn \to \infty} \lambda_{\hat{k}(\bfnn)}\left(X^{(\bfnn)}\right) = \lim_{\bfnn \to \infty} \omega^*\left(\frac{\hat{k}(\bfnn)}{d_\bfnn}\right)=\omega^*\left(x_0\right).
\end{equation}
Finally, if $\lambda_{\hat{k}(\bfnn)}\left(X^{(\bfnn)}\right) \geq \inf\left(R_\omega\right)$  ($\leq \sup(R_\omega)$) definitely, then Equation \eqref{eq:discrete_Weyl_law2_2} holds for $x_0=0$ ($x_0=1$) as well. 
\end{theorem}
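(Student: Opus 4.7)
The strategy is to translate every eigenvalue statement into one about the empirical counting measures $\mu_\bfnn$ of \eqref{eq:discrete_measure}, use Lemma \ref{lem:equivalent_definition_distribution} to pass to the distribution function of $\omega$, and then apply quantile-function (generalized inverse) techniques. For \eqref{eq:weyl-SLLN}, I would invoke the standard equimeasurability of $\omega$ and its monotone rearrangement $\omega^*$: by the very construction in \eqref{eq:rearrangment}--\eqref{eq:rearrangment2}, both share $\phi_\omega$ as distribution function, so that $m(D)^{-1}\int_D F(\omega(\bfyy))\,m(d\bfyy)=\int_0^1 F(\omega^*(x))\,dx$ for every $F\in C_c(\mathbb{K})$; combined with $\{X^{(\bfnn)}\}\sim_\lambda\omega$ and Definition \ref{def:ss_def}, this immediately yields $\{X^{(\bfnn)}\}\sim_\lambda\omega^*$.

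For \eqref{eq:discrete_Weyl_law1}, starting from $\mu_\bfnn\xrightarrow{v}\mu_\omega$ (Lemma \ref{lem:equivalent_definition_distribution}) I would first argue tightness of $\{\mu_\bfnn\}$: the vague limit $\mu_\omega$ is a probability measure, hence tight, so for every $\epsilon>0$ there exists a bounded interval $I$ with $\mu_\omega(I)>1-\epsilon$, and testing $\mu_\bfnn$ against a $C_c$ cut-off equal to $1$ on $I$ forces $\mu_\bfnn(I)>1-2\epsilon$ eventually. Vague plus tight is weak convergence, so by the Helly--Bray principle $F_\bfnn(t):=N(X^{(\bfnn)},t)/d_\bfnn\to \phi_\omega(t)$ at every continuity point of $\phi_\omega$. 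The hypothesis $m(\omega^{-1}(\{t\}))=0$ for all $t\in R_\omega$ (trivially off $R_\omega$) is precisely continuity of $\phi_\omega$ on $\R$, so the convergence holds at every $t$. Monotonicity of $F_\bfnn$ together with continuity of the limit and P\'olya's theorem then upgrade the pointwise convergence to uniform convergence on $\R$.

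For \eqref{eq:discrete_Weyl_law2_1}--\eqref{eq:discrete_Weyl_law2_2}, the key starting point is the pair of counting inequalities
\[
F_\bfnn\bigl(\lambda_{\hat{k}(\bfnn)}\bigr)\geq\frac{\hat{k}(\bfnn)}{d_\bfnn},\qquad F_\bfnn(t)\leq \frac{\hat{k}(\bfnn)-1}{d_\bfnn}\quad\forall\,t<\lambda_{\hat{k}(\bfnn)},
\]
which follow directly from the definition of $F_\bfnn$ as a step function jumping by $1/d_\bfnn$ at each eigenvalue. Extracting a subsequence along which $\lambda_{\hat{k}(\bfnn)}\to L$, uniform convergence of $F_\bfnn$ together with continuity of $\phi_\omega$ let me pass to the limit in both inequalities and conclude $\phi_\omega(L^-)\leq x_0\leq \phi_\omega(L)$, hence $\phi_\omega(L)=x_0$, i.e.\ $L$ belongs to the closed level set $\phi_\omega^{-1}(\{x_0\})$, which by monotonicity and continuity is a (possibly degenerate) interval $[a,b]$. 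Since $\phi_\omega$ is constant on $[a,b]$, the open interval $(a,b)$ is disjoint from $R_\omega$; combined with the strict approach $\hat{k}(\bfnn)/d_\bfnn<x_0$, this identifies $\sup\{t\in R_\omega:t\leq L\}$ with $a=\lim_{x\to x_0^-}\omega^*(x)$, proving \eqref{eq:discrete_Weyl_law2_1}. Left continuity of $\omega^*$ at $x_0$ collapses $[a,b]$ to $\{\omega^*(x_0)\}$, giving \eqref{eq:discrete_Weyl_law2_2}; the boundary cases $x_0\in\{0,1\}$ reduce to the same argument once the assumption $\lambda_{\hat{k}(\bfnn)}\geq\inf R_\omega$ (resp.\ $\leq\sup R_\omega$) definitively rules out escape of mass to $\pm\infty$.

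The main obstacle lies in the pinning step just sketched: ruling out cluster points $L\in(a,b]$ produced by outliers drifting inside the plateau requires a careful interplay between the \emph{strict} inequality $(\hat{k}(\bfnn)-1)/d_\bfnn<\hat{k}(\bfnn)/d_\bfnn\to x_0^-$ and the quantitative corollary of weak convergence that, in every neighbourhood of the gap $(a,b)$, the fraction of eigenvalues contained there is $o(1)$; once this is in place, all remaining ingredients are standard applications of weak-convergence theory and generalized-inverse manipulations.
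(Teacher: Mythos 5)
Your proposal follows essentially the same route as the paper's proof: equimeasurability of $\omega$ and $\omega^*$ (both having distribution function $\phi_\omega$) for \eqref{eq:weyl-SLLN}; the vague convergence of Lemma \ref{lem:equivalent_definition_distribution} upgraded to pointwise convergence of the distribution functions and then, via P\'olya's theorem, to uniform convergence for \eqref{eq:discrete_Weyl_law1}; and counting-function inequalities plus an analysis of the level set of $\phi_\omega$ for \eqref{eq:discrete_Weyl_law2_1}--\eqref{eq:discrete_Weyl_law2_2}. You are in fact more explicit than the paper on two points it glosses over: the tightness needed to pass from vague to weak convergence (the paper jumps directly from $\mu_\bfnn \xrightarrow{v} \mu_\omega$ to convergence of $\mu_\bfnn((-\infty,t])$), and the possibility of ties among eigenvalues, which your two-sided inequalities $F_\bfnn(\lambda_{\hat{k}})\geq \hat{k}/d_\bfnn$ and $F_\bfnn(t)\leq(\hat{k}-1)/d_\bfnn$ for $t<\lambda_{\hat{k}}$ handle cleanly, whereas the paper simply writes $k(\bfnn)=N\left(X^{(\bfnn)},\lambda_{k(\bfnn)}\right)$.

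The step you defer, however --- pinning the cluster point $L$ to the left endpoint $a$ of the plateau $[a,b]=\phi_\omega^{-1}(\{x_0\})$ --- is a genuine gap, and the mechanism you propose does not close it. Knowing that the fraction of eigenvalues falling in any neighbourhood of the gap $(a,b)$ is $o(d_\bfnn)$ says nothing about the \emph{single} eigenvalue $\lambda_{\hat{k}(\bfnn)}$: one can build a sequence with $\{X^{(\bfnn)}\}_\bfnn\sim_\lambda\omega$, no outliers, $\hat{k}(\bfnn)/d_\bfnn\to x_0^-$ and $\lambda_{\hat{k}(\bfnn)}\left(X^{(\bfnn)}\right)=b$ for every $\bfnn$ (take roughly $x_0 d_\bfnn - o(d_\bfnn)$ eigenvalues distributed below $a$ according to $\phi_\omega$, an $o(d_\bfnn)$ block exactly at $b$, and the remaining ones above $b$); then $L=b\in R_\omega$ and $\sup\{t\in R_\omega \,:\, t\leq L\}=b\neq a=\lim_{x\to x_0^-}\omega^*(x)$. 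So when $\omega^*$ jumps at $x_0$, the second identity in \eqref{eq:discrete_Weyl_law2_1} cannot be forced by counting arguments of the kind you sketch: the strict approach $\hat{k}(\bfnn)/d_\bfnn<x_0$ controls the index, not the eigenvalue. For fairness, the paper's own proof is equally loose at this point --- it passes from $\phi_\omega(t_0)=x_0$ directly to $\lim_\bfnn\omega^*\left(k(\bfnn)/d_\bfnn\right)=\sup\{t\in R_\omega\,:\,t\leq t_0\}$, silently excluding $t_0=b$. In the case that the rest of the paper actually uses, namely $\omega^*$ (left) continuous at $x_0$, one has $a=b$, the plateau degenerates, every finite cluster point equals $\omega^*(x_0)$ (and the infinite ones are excluded by the same inequalities together with $\phi_\omega\to 0,1$ at $\mp\infty$), so your argument, including the boundary cases $x_0\in\{0,1\}$ under the definite bounds, is complete and coincides in substance with the paper's.
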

\begin{proof}
Let us observe that the function $\phi_\omega$ defined in \eqref{eq:rearrangment2} is the distribution function of $\omega$, while $\omega^*$ is the distribution function of $\phi_{\omega}$: $\omega^*$ is monotone increasing, right continuous and the possibly non-empty set of its point of discontinuity (jumps) is at most countable, see \cite[Lemma 6.3.10]{Taylor97}. By the assumptions on $\omega$ it follows that $\phi_{\omega}$ is continuous, then by standard results in Probability Theory it holds that $\phi_\omega \circ \omega:=X$ is uniformly distributed on $(0,1)$, i.e. $X\sim U(0,1)$, which implies that $\omega^*(X)$ and $\omega$ have the same distribution. Therefore, 
$$
\frac{1}{m(D)}\iint_{D}  F\left(\omega(\bfyy)\right)m(d\bfyy)= \mathbb{E}\left(F(\omega)\right)=\mathbb{E}\left(F(\omega^*(X))\right)= \int_0^1 F\left(\omega^*(x)\right)m(dx)  \quad\forall F \in C_c(\mathbb R).
$$
The above identity plus \eqref{def_asym-bis-Matrix} give \eqref{eq:weyl-SLLN}. Define now
	
	$$
	\phi_\bfnn (t) := \frac{N(X^{(\bfnn)},t)}{d_\bfnn}.
	$$
Clearly, $\phi_\bfnn$ is the distribution function of the s.p.m. $\mu_\bfnn$ in \eqref{eq:discrete_measure}, that is $\mu_\bfnn((-\infty,t])=\phi_\bfnn(t)$ for every $t\in \R$. By Lemma \ref{lem:equivalent_definition_distribution} we have that $\mu_\bfnn \xrightarrow{v} \mu_\omega$ and since $\phi_\omega$ is continuous, then $\mu_\omega((a,b))=\mu_\omega([a,b])$ for every $a<b \in \R$ and  
$$
\lim_{\bfnn \to \infty}\frac{N(X^{(\bfnn)},t)}{d_\bfnn}=  \lim_{\bfnn \to \infty}\phi_\bfnn(t) = \lim_{\bfnn \to \infty} \mu_\bfnn((-\infty,t]) = \mu((-\infty,t])=\phi_\omega(t) \qquad \mbox{for every } t \in \R,
$$
which is exactly \eqref{eq:discrete_Weyl_law1}. 

Without loss of generality and for the sake of simplicity, let now $d^-_\bfnn\left(X^{(\bfnn)}\right)=0$, such that $\hat{k}=k$, and define $\lambda_{k(\bfnn)}:= \lambda_{k(\bfnn)}\left(X^{(\bfnn)}\right)$.  By Equation \eqref{eq:discrete_Weyl_law1} and since $\phi_{\omega}$ is continuous, by a well know theorem of P\'{o}lya it holds that $\phi_\bfnn \to \phi_{\omega}$ uniformly, and therefore we can argue that
\begin{equation}\label{eq:2}
\lim_{\bfnn \to \infty} \frac{k(\bfnn)}{d_\bfnn}= \lim_{\bfnn \to \infty}\frac{N\left(X^{(\bfnn)},\lambda_{k(\bfnn)}\right))}{d_\bfnn} = \lim_{\bfnn \to \infty} \phi_\bfnn\left(\lambda_{k(\bfnn)}\right) = \phi_{\omega}\left(\lim_{\bfnn \to \infty}\lambda_{k(\bfnn)}\right).
\end{equation}
Since $k(\bfnn)/d_\bfnn \to x_0^-$ and since $\omega^*$ is right continuous with an at most countable number of points of jumps, then
the right-hand side of \eqref{eq:2} is well-defined. Moreover, since $x_0 \in (0,1)$ then by \eqref{eq:rearrangment2} and the Definition \ref{def:essential_range}, it holds that $\lim_{\bfnn \to \infty}\lambda_{k(\bfnn)}=t_0 \in (\inf R_\omega, \sup R_\omega)$. Finally, by the above relation \eqref{eq:2} and by \eqref{eq:rearrangment}, we can conclude that
$$
\lim_{\bfnn \to \infty}\omega^*\left( \frac{k(\bfnn)}{d_\bfnn}\right)= \sup_{t \in R_\omega} \left\{ t\leq t_0 \right\}.
$$
Let us observe now that $x_0$ is a jump discontinuity point if and only if there exist $t_1<t_2 \in R_\omega$ such that $R_\omega \subseteq (\inf R_\omega, t_1] \cup [t_2,\sup R_\omega)$ and $\phi_{\omega}(t)=x_0$ if and only if $t\in [t_1,t_2]$. Therefore, if $\omega^*$ is continuous in $x_0$, then $t_0=t_1=t_2 \in R_\omega$ and we have \eqref{eq:discrete_Weyl_law2_2}.
\end{proof}

In some sense, the limit relation \eqref{ss_def2} can be viewed as the strong law for large numbers for specially chosen sequences of dependent complex/real-valued random variables $\{ \lambda_{k(\bfnn)}\left(X^{(\bfnn)}\right)\}_\bfnn$. See \cite{Szego84} for the link between the spectrum of Hermitian matrices and equally distributed sequences (in the sense of Weyl) as defined in \cite[Definition 7.1]{Kuipers-Niederreiter}, and \cite{LL18} as a recent survey about equidistributions from a probabilistic point of view.

\begin{corollary}\label{cor:weak_clustering}
In the same hypothesis of Theorem \ref{thm:discrete_Weyl_law}, it holds that
$$
\lim_{\bfnn \to \infty}\frac{\left| \left\{ k=-d^-_\bfnn,\ldots,d^+_\bfnn \, : \, \lambda_k \left(X^{(\bfnn)}\right) \notin R_\omega \right\} \right|}{d_\bfnn} = 0,
$$
that is, the number of possible outliers is $o(d_\bfnn)$. In particular,
$$
\lim_{\bfnn \to \infty}\frac{\left| \left\{ k=-d^-_\bfnn,\ldots,d^+_\bfnn \, : \,  \lambda_k \left(X^{(\bfnn)}\right)\leq t,\, \lambda_k \left(X^{(\bfnn)}\right) \in R_\omega \right\} \right|}{d_\bfnn} = \phi_\omega(t).
$$
\end{corollary}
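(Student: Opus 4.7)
The plan is to first establish that $\mu_\bfnn(\R\setminus R_\omega)\to 0$, where $\mu_\bfnn$ is the empirical spectral measure of \eqref{eq:discrete_measure}; the second, refined statement will then follow by subtraction from the discrete Weyl law \eqref{eq:discrete_Weyl_law1}.

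For the first step I would first verify that $\mu_\omega$ is concentrated on $R_\omega$: by Definition \ref{def:essential_range}, for every $t\in\R\setminus R_\omega$ there is $\epsilon_t>0$ with $m(\omega^{-1}((t-\epsilon_t,t+\epsilon_t)))=0$, and a countable subcover argument exploiting the second countability of $\R\setminus R_\omega$ yields $\mu_\omega(R_\omega)=1$. Next I would combine the assumed continuity of $\phi_\omega$ with \eqref{eq:discrete_Weyl_law1} and P\'olya's theorem on monotone pointwise convergence to a continuous limit to promote $\phi_\bfnn\to\phi_\omega$ to uniform convergence on $\R$. Now decompose $\R\setminus R_\omega$ as a disjoint countable union of open intervals $(a_j,b_j)$ with endpoints in $R_\omega\cup\{\pm\infty\}$; since $\mu_\omega$ does not charge these intervals, $\phi_\omega$ is constant on each $[a_j,b_j]$, and therefore
\begin{equation*}
\mu_\bfnn((a_j,b_j))\leq \phi_\bfnn(b_j)-\phi_\bfnn(a_j) \longrightarrow \phi_\omega(b_j)-\phi_\omega(a_j)=0
\end{equation*}
for every $j$. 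In the setting typical of the paper---where $D$ is compact and $\omega$ continuous, so by Remark \ref{rem:ssymbol_sampling} $R_\omega=[\min\omega,\max\omega]$ is an interval, or more generally a finite union of closed intervals---summing this bound over the finitely many components gives $\mu_\bfnn(\R\setminus R_\omega)\to 0$ at once. For a general closed $R_\omega$ one first truncates: for $\epsilon>0$, uniform convergence of $\phi_\bfnn$ forces $\mu_\bfnn(\R\setminus[-N,N])<2\epsilon$ once $N$ is chosen so that $\phi_\omega(-N)<\epsilon/2$ and $1-\phi_\omega(N)<\epsilon/2$, and then one reduces the possibly countable family of components of $[-N,N]\cap R_\omega^c$ to a finite one.

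The main obstacle I expect is precisely this last finite truncation: summing the bound $\mu_\bfnn((a_j,b_j))\to 0$ over countably many $j$ is not automatic, and a clean conclusion requires the mild regularity $\mu_\omega(\partial R_\omega)=0$, which holds throughout the applications of Section \ref{sec:example} but can fail for pathological $R_\omega$ such as a fat Cantor set. Granted the first part, the second assertion follows immediately by writing
\begin{equation*}
\#\{k:\lambda_k\leq t,\ \lambda_k\in R_\omega\} = N(X^{(\bfnn)},t)-\#\{k:\lambda_k\leq t,\ \lambda_k\notin R_\omega\},
\end{equation*}
observing that the first term divided by $d_\bfnn$ tends to $\phi_\omega(t)$ by \eqref{eq:discrete_Weyl_law1}, and that the second term is bounded by the total outlier count, which is $o(d_\bfnn)$; dividing by $d_\bfnn$ and letting $\bfnn\to\infty$ yields the stated identity.
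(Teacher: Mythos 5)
Your argument is correct where it is complete, and for the second assertion it coincides with the paper's proof (subtract the outlier count, which is $o(d_\bfnn)$, from $N(X^{(\bfnn)},t)$ and invoke \eqref{eq:discrete_Weyl_law1}). For the first assertion, however, you take a genuinely different and in fact more scrupulous route. The paper argues by pure subtraction at the extremes of the essential range: it bounds the fraction of eigenvalues below $\inf R_\omega$ by $\phi_\bfnn(\inf R_\omega)\to\phi_\omega(\inf R_\omega)=0$ and then writes $\left|\{k:\lambda_k(X^{(\bfnn)})\in R_\omega\}\right| = N(X^{(\bfnn)},\sup R_\omega)-\left|\{k:\lambda_k(X^{(\bfnn)})<\inf R_\omega\}\right|$, so that $1-\phi_\omega(\sup R_\omega)=0$ concludes; note that this identity tacitly identifies $R_\omega$ with the interval $[\inf R_\omega,\sup R_\omega]$, i.e.\ it only controls outliers lying outside the convex hull of $R_\omega$ and never examines eigenvalues falling into internal gaps of $R_\omega$. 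You attack precisely those gaps, decomposing $\R\setminus R_\omega$ into its countably many open components, on whose closures $\phi_\omega$ is constant, and using P\'olya's theorem to control each component; this is exactly what is needed when $R_\omega$ is not an interval, and the obstruction you flag is real: summing over infinitely many components requires either finitely many gaps (as in all the examples of the paper, where $D$ is compact and connected and $\omega$ is continuous, so $R_\omega$ is an interval) or an extra condition such as $\mu_\omega(\partial R_\omega)=0$, under which the open-set portmanteau bound $\liminf_{\bfnn} \mu_\bfnn(\mathrm{int}\,R_\omega)\geq\mu_\omega(\mathrm{int}\,R_\omega)=1$ finishes at once. Without some such hypothesis the corollary as stated can actually fail (place the eigenvalues in the gaps of a Cantor-type essential range whose $\phi_\omega$ is continuous), a pathology that the paper's shorter argument simply does not see. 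In short, the paper's proof buys brevity and suffices for the interval-shaped symbols occurring in its applications, while yours buys generality and makes explicit the mild regularity that a gapped $R_\omega$ genuinely demands.
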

\begin{proof}
It is immediate from \eqref{eq:discrete_Weyl_law1}. Let us observe that
\begin{equation*}
0\leq \lim_{\bfnn \to \infty} \frac{\left| \left\{k \, : \, \lambda_k \left(X^{(\bfnn)}\right)< \inf R_\omega \right\} \right|}{d_\bfnn} \leq\lim_{n \to \infty} \frac{N(X^{(\bfnn)},\inf R_\omega)}{d_\bfnn}= \phi_\omega\left(\inf R_\omega\right) =0.
\end{equation*}
Moreover, since
\begin{align*}
\frac{\left| \left\{ k \, : \, \lambda_k \left(X^{(\bfnn)}\right) \notin R_\omega \right\} \right|}{d_\bfnn} &= \frac{\left| \left\{ k \, : \, \lambda_k \left(X^{(\bfnn)}\right) \in \R \right\} \right|}{d_\bfnn} - \frac{\left| \left\{ k \, : \, \lambda_k \left(X^{(\bfnn)}\right) \in R_\omega \right\} \right|}{d_\bfnn}\\
&= 1 - \frac{\left| \left\{ k \, : \, \lambda_k \left(X^{(\bfnn)}\right) \in R_\omega \right\} \right|}{d_\bfnn}\\
&= 1 - \frac{N(X^{(\bfnn)},\sup R_\omega)}{d_\bfnn}+\frac{\left| \left\{ k\, : \, \lambda_k \left(X^{(\bfnn)}\right)< \inf R_\omega \right\} \right|}{d_\bfnn},
\end{align*}
then passing to the limit we get
\begin{align*}
\lim_{\bfnn \to \infty} \frac{\left| \left\{ k \, : \, \lambda_k \left(X^{(\bfnn)}\right) \notin R_\omega \right\} \right|}{d_\bfnn} &= 1- \phi_\omega\left(\sup R_\omega\right) =0.
\end{align*}
The second part of the thesis follows instead by \eqref{eq:discrete_Weyl_law1} and the easy fact that
$$
\left| \left\{ k \, : \,  \lambda_k \left(X^{(\bfnn)}\right)\leq t,\, \lambda_k \left(X^{(\bfnn)}\right) \in R_\omega \right\} \right| = \left| \left\{ k \, : \,  \lambda_k \left(X^{(\bfnn)}\right)\leq t \right\} \right| - \left| \left\{ k \, : \,  \lambda_k \left(X^{(\bfnn)}\right)\leq t,\, \lambda_k \left(X^{(\bfnn)}\right) \notin R_\omega \right\} \right|,
$$
where
$$
\left| \left\{ k \, : \,  \lambda_k \left(X^{(\bfnn)}\right)\leq t,\, \lambda_k \left(X^{(\bfnn)}\right) \notin R_\omega \right\} \right| \leq \left| \left\{ k \, : \,  \lambda_k \left(X^{(\bfnn)}\right) \notin R_\omega \right\} \right|= o(d_\bfnn).
$$
\end{proof}

\begin{corollary}\label{cor:differentiability}
In the same hypothesis of Theorem \ref{thm:discrete_Weyl_law}, assume moreover that $\omega^*$ is absolutely continuous. Let $g: (\inf R_\omega,\sup R_\omega) \to \R$ be a differentiable real function and, using the same notation as in \eqref{def:re-index}, let $\{\hat{k}(\bfnn)\}_\bfnn$ be a sequence of integers such that
\begin{enumerate}[(i)]
	\item $\frac{\hat{k}(\bfnn)}{d_\bfnn} \to x_0 \in [0,1]$;
	\item $\lambda_{k(\bfnn)+1}\left(X^{(\bfnn)}\right)> \lambda_{k(\bfnn)}\left(X^{(\bfnn)}\right) \in [\inf R_\omega,\sup R_\omega]$ definitely for $\bfnn \to \infty$.
\end{enumerate} 
Then
$$
\lim_{\bfnn\to \infty} d_\bfnn\left[g\left(\lambda_{k(\bfnn)+1}\left(X^{(\bfnn)}\right) \right) - g\left(\lambda_{k(\bfnn)}\left(X^{(\bfnn)}\right) \right)\right] = \lim_{x\to x_0} \left(g(\omega^*(x))\right)' \quad \mbox{a.e.}
$$
\end{corollary}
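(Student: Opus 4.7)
My plan is to bridge the eigenvalue difference through the corresponding difference of $\omega^*$ at the sampling points $\hat{k}(\bfnn)/d_\bfnn$ and then apply the Lebesgue differentiation theorem, finally transferring the spacing result through $g$ by its differentiability.

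First I would record the regularity consequences of the hypothesis: absolute continuity of $\omega^*$ implies continuity on $[0,1]$, differentiability a.e., and the fundamental theorem of calculus $\omega^*(y)-\omega^*(x)=\int_x^y(\omega^*)'(s)\,ds$. Since $\hat{k}(\bfnn)/d_\bfnn\to x_0$ and, equivalently, $(\hat{k}(\bfnn)+1)/d_\bfnn\to x_0$, the continuity of $\omega^*$ at $x_0$ places us in the regime of \eqref{eq:discrete_Weyl_law2_2}, yielding
$$
\lim_{\bfnn\to\infty}\lambda_{\hat{k}(\bfnn)}\left(X^{(\bfnn)}\right)=\omega^*(x_0)=\lim_{\bfnn\to\infty}\lambda_{\hat{k}(\bfnn)+1}\left(X^{(\bfnn)}\right),
$$
while hypothesis (ii) locates both eigenvalues in $R_\omega$ eventually, so that the identification $\lambda_{\hat{k}(\bfnn)}\leftrightarrow \omega^*(\hat{k}(\bfnn)/d_\bfnn)$ is well posed through the inverse of $\phi_\omega$.

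Next I would control the symbol spacing via the Lebesgue differentiation theorem: at every Lebesgue point $x_0$ of $(\omega^*)'$ (a set of full measure in $[0,1]$), the contracting intervals $[\hat{k}/d_\bfnn,(\hat{k}+1)/d_\bfnn]$ satisfy
$$
d_\bfnn\bigl[\omega^*((\hat{k}+1)/d_\bfnn)-\omega^*(\hat{k}/d_\bfnn)\bigr] = d_\bfnn\int_{\hat{k}/d_\bfnn}^{(\hat{k}+1)/d_\bfnn}(\omega^*)'(s)\,ds \ \xrightarrow[\bfnn\to\infty]{}\ (\omega^*)'(x_0).
$$
I would then transfer this spacing to $g$ using differentiability of $g$ at $\omega^*(x_0)$: for $u_\bfnn,v_\bfnn\to\omega^*(x_0)$ one has $g(v_\bfnn)-g(u_\bfnn)=g'(\omega^*(x_0))(v_\bfnn-u_\bfnn)+o(|v_\bfnn-\omega^*(x_0)|+|u_\bfnn-\omega^*(x_0)|)$. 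Substituting $u_\bfnn=\lambda_{\hat{k}(\bfnn)}$ and $v_\bfnn=\lambda_{\hat{k}(\bfnn)+1}$, multiplying by $d_\bfnn$, and invoking the chain rule $(g\circ\omega^*)'(x_0)=g'(\omega^*(x_0))(\omega^*)'(x_0)$ valid a.e., one should recover the claimed limit, with the outer $\lim_{x\to x_0}$ on the right-hand side matching the a.e.\ derivative of $g\circ\omega^*$.

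The hardest step — and the one I expect to require most care — is promoting the pointwise asymptotics $\lambda_{\hat{k}(\bfnn)}-\omega^*(\hat{k}(\bfnn)/d_\bfnn)\to 0$ supplied by \eqref{eq:discrete_Weyl_law2_2} to the finer rate $o(1/d_\bfnn)$ that is needed in order to equate $d_\bfnn(\lambda_{\hat{k}+1}-\lambda_{\hat{k}})$ with $d_\bfnn[\omega^*((\hat{k}+1)/d_\bfnn)-\omega^*(\hat{k}/d_\bfnn)]$ asymptotically. The natural route is to combine the uniform P\'{o}lya-type convergence $\phi_{\bfnn}\to\phi_\omega$ (already invoked in the proof of Theorem \ref{thm:discrete_Weyl_law}) with the strict monotonicity implicit in the absolute continuity of $\omega^*$, inverting the discrete counting functions locally at Lebesgue points of $(\omega^*)'$; once this rate upgrade is secured, the chain-rule and differentiability manipulations above deliver the identity essentially by one application of Lebesgue differentiation.
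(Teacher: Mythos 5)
Your overall strategy coincides with the paper's: pass from the eigenvalue gap to the gap of $\omega^*$ at the nodes $\hat k(\bfnn)/d_\bfnn$ and $(\hat k(\bfnn)+1)/d_\bfnn$, then use absolute continuity (Lebesgue differentiation) and the differentiability of $g$ together with the chain rule. Those parts of your write-up are fine. The genuine gap is precisely the step you flag and then defer: to trade $d_\bfnn\bigl[g\bigl(\lambda_{\hat k(\bfnn)+1}\bigl(X^{(\bfnn)}\bigr)\bigr)-g\bigl(\lambda_{\hat k(\bfnn)}\bigl(X^{(\bfnn)}\bigr)\bigr)\bigr]$ for $d_\bfnn\bigl[g\bigl(\omega^*((\hat k(\bfnn)+1)/d_\bfnn)\bigr)-g\bigl(\omega^*(\hat k(\bfnn)/d_\bfnn)\bigr)\bigr]$ you need $\lambda_{\hat k(\bfnn)}\bigl(X^{(\bfnn)}\bigr)-\omega^*\bigl(\hat k(\bfnn)/d_\bfnn\bigr)=o\bigl(d_\bfnn^{-1}\bigr)$, whereas \eqref{eq:discrete_Weyl_law2_2} only gives $o(1)$. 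The repair you sketch cannot supply this: P\'olya's theorem upgrades pointwise to uniform convergence of $\phi_\bfnn\to\phi_\omega$ but carries no rate at all, and in fact no rate can follow from the stated hypotheses, because the relation $\{X^{(\bfnn)}\}_\bfnn\sim_\lambda\omega$ is insensitive to $o(1)$ perturbations of individual eigenvalues while the quantity $d_\bfnn\bigl(\lambda_{\hat k+1}-\lambda_{\hat k}\bigr)$ lives exactly at scale $d_\bfnn^{-1}$. Concretely, take $\omega(y)=y$ on $D=[0,1]$ and $X^{(n)}=\diag_{j=1,\ldots,n}\bigl\{(j-1+\delta_j^{(n)})/n\bigr\}$ with $\delta_j^{(n)}\in[0,1/2]$: all hypotheses of the corollary hold with $\omega^*(x)=x$, yet $n\bigl(\lambda_{k+1}-\lambda_k\bigr)=1+\delta_{k+1}^{(n)}-\delta_k^{(n)}$ can be steered away from $(\omega^*)'(x_0)=1$ or made non-convergent. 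So the ``rate upgrade'' is not a deferrable technicality; it is the crux, and it requires an additional assumption of eigenvalue--symbol approximation at rate $o\bigl(d_\bfnn^{-1}\bigr)$ (of the flavour recalled in Remark \ref{rem:ssymbol_sampling} for Toeplitz matrices with regular symbols), not a consequence of Theorem \ref{thm:discrete_Weyl_law}. A minor slip besides: strict monotonicity of $\omega^*$ comes from the continuity of $\phi_\omega$ assumed in Theorem \ref{thm:discrete_Weyl_law}, not from the absolute continuity of $\omega^*$.

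For context, the paper's own proof performs exactly the substitution in question without comment: it rewrites the difference quotient of $g$ at the eigenvalues as the difference quotient of $g\circ\omega^*$ at $\hat k(\bfnn)/d_\bfnn$ and $\hat k(\bfnn)/d_\bfnn+1/d_\bfnn$, invoking \eqref{eq:discrete_Weyl_law2_2}. Your proposal therefore reproduces the published argument while honestly isolating its weakest link; but as a self-contained proof it remains incomplete at that link, and the P\'olya-based route you propose does not deliver the missing $o\bigl(d_\bfnn^{-1}\bigr)$ control.
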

\begin{proof}
Since $\omega^*$ is absolutely continuous then it is differentiable almost everywhere. Let $x_0 \in [0,1]$ such that $(\omega^*)'_{|x=x_0}$ exists. Then it is a straightforward calculation from \eqref{eq:discrete_Weyl_law2_2}:
\begin{align*}
\lim_{\bfnn \to \infty} d_\bfnn\left[g\left(\lambda_{k(\bfnn)+1}\left(X^{(\bfnn)}\right) \right) - g\left(\lambda_{k(\bfnn)}\left(X^{(\bfnn)}\right) \right)\right] &= \lim_{\bfnn \to \infty}\frac{g\left(\lambda_{k(\bfnn)+1}\left(X^{(\bfnn)}\right) \right) - g\left(\lambda_{k(\bfnn)}\left(X^{(\bfnn)}\right) \right) }{\frac{1}{d_\bfnn}}\\
&= \lim_{\bfnn \to \infty}\frac{g\left(\omega^*\left(\frac{\hat{k}(\bfnn)}{d_\bfnn} +\frac{1}{d_\bfnn} \right)\right) - g\left(\omega^*\left(\frac{\hat{k}(\bfnn)}{d_\bfnn}\right)\right)}{\frac{1}{d_\bfnn}}\\
&=\lim_{\bfnn \to \infty}\frac{g\left(\omega^*\left(x_0 +\frac{1}{d_\bfnn} \right)\right) - g\left(\omega^*\left(x_0\right)\right)}{\frac{1}{d_\bfnn}}\\
&=\lim_{x\to x_0} \left(g(\omega^*(x))\right)'.
\end{align*}
\end{proof}

\begin{corollary}\label{cor:discrete_Weyl_law}
Let $\omega$ satisfies the hypothesis of Theorem \ref{thm:discrete_Weyl_law} and moreover let suppose that $R_\omega$ is bounded, and that $\omega^*$ is (left) continuous. Then, in presence of no outliers (definitely), the absolute error between a uniform sampling of $\omega^*$ and the eigenvalues of $X^{(\bfnn)}$ converges to zero, namely
\begin{equation*}%\label{eq:sup_convergence}
\mathcal{A}_\bfnn:= \max_{\substack{k=-d^-_\bfnn,\ldots,d^+_\bfnn\\k\neq 0}} \left\{\left|\lambda_{k}\left(X^{(\bfnn)}\right) - \omega^*\left(\frac{\hat{k}}{d_\bfnn+1}\right)\right|\right\} \to 0 \qquad \mbox{as } \bfnn\to \infty.
\end{equation*}
\end{corollary}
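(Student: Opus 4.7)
The strategy is to argue by contradiction, using the pointwise convergence \eqref{eq:discrete_Weyl_law2_2} from Theorem \ref{thm:discrete_Weyl_law} combined with uniform continuity of $\omega^*$ on the compact interval $[0,1]$.

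First I would observe that because $R_\omega$ is bounded, $\omega^*$ is defined on $[0,1]$ via the extension mentioned after \eqref{eq:rearrangment2}. The proof of Theorem \ref{thm:discrete_Weyl_law} recorded that $\omega^*$ is always right continuous; the hypothesis adds left continuity, so $\omega^*$ is continuous, hence uniformly continuous, on the compact interval $[0,1]$.

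Next, assume for contradiction that $\mathcal{A}_\bfnn \not\to 0$. Since each max is attained, there exist $\varepsilon>0$, a subsequence $\{\bfnn_j\}_j$ with $\bfnn_j\to\infty$, and indices $k(\bfnn_j)\neq 0$ with re-indexed value $\hat k(\bfnn_j)=\vartheta_{X^{(\bfnn_j)}}(k(\bfnn_j))\in\{1,\dots,d_{\bfnn_j}\}$ realizing the max, such that
$$\left|\lambda_{k(\bfnn_j)}\!\left(X^{(\bfnn_j)}\right) - \omega^*\!\left(\frac{\hat k(\bfnn_j)}{d_{\bfnn_j}+1}\right)\right| \geq \varepsilon.$$
Since $\hat k(\bfnn_j)/(d_{\bfnn_j}+1)\in(0,1)$, after a further extraction we may assume $\hat k(\bfnn_j)/d_{\bfnn_j}\to x_0\in[0,1]$; the ratio $\hat k(\bfnn_j)/(d_{\bfnn_j}+1)$ shares this limit.

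The key step is to apply Theorem \ref{thm:discrete_Weyl_law} to the subsequence $\{X^{(\bfnn_j)}\}_j$, which trivially inherits the spectral symbol $\omega$ from Definition \ref{def:ss_def}. The "no outliers definitely" hypothesis forces $\lambda_{k(\bfnn_j)}\!\left(X^{(\bfnn_j)}\right)\in R_\omega$ eventually, so both $\lambda_{k(\bfnn_j)}\geq \inf R_\omega$ and $\lambda_{k(\bfnn_j)}\leq \sup R_\omega$ hold definitely. This activates the final clause of Theorem \ref{thm:discrete_Weyl_law}, extending \eqref{eq:discrete_Weyl_law2_2} to the endpoints $x_0\in\{0,1\}$, and yields
$$\lambda_{k(\bfnn_j)}\!\left(X^{(\bfnn_j)}\right)\to \omega^*(x_0) \qquad \text{while} \qquad \omega^*\!\left(\frac{\hat k(\bfnn_j)}{d_{\bfnn_j}+1}\right)\to \omega^*(x_0)$$
by continuity of $\omega^*$ at $x_0$. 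Subtracting contradicts the lower bound $\varepsilon$.

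The main obstacle I expect is the treatment of boundary indices, namely the case $x_0\in\{0,1\}$, because the interior statement \eqref{eq:discrete_Weyl_law2_2} is not directly applicable there. This is precisely where the no-outliers assumption becomes indispensable: it supplies the supplementary condition $\lambda_{\hat k(\bfnn)}\in[\inf R_\omega,\sup R_\omega]$ that unlocks the endpoint extension stated at the end of Theorem \ref{thm:discrete_Weyl_law}. Without it, a boundary cluster of spurious eigenvalues escaping $R_\omega$ could keep $\mathcal{A}_\bfnn$ bounded away from zero even though Theorem \ref{thm:discrete_Weyl_law} continues to hold in the interior.
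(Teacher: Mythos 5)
Your proof is correct and takes essentially the same route as the paper: argue by contradiction, extract a subsequence with $\hat k(\bfnn_j)/d_{\bfnn_j}\to x_0\in[0,1]$, and use \eqref{eq:discrete_Weyl_law2_2} of Theorem \ref{thm:discrete_Weyl_law} — with the no-outliers hypothesis supplying $\lambda_{k}\left(X^{(\bfnn)}\right)\in[\inf R_\omega,\sup R_\omega]$ definitely, which activates the endpoint cases $x_0\in\{0,1\}$ — together with continuity of $\omega^*$ at $x_0$ to force both terms to the same limit. The only difference is cosmetic: the paper concludes via a small case split and a bound of the absolute error by a constant times the relative error, whereas you subtract the two limits directly, which is a slightly cleaner finish of the same argument.
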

\begin{proof}
Let us observe that since there are no outliers, then $\min(R_\omega)\leq \lambda_{k}\left(X^{(\bfnn)}\right) \leq \max(R_\omega)$ definitely for every $k$. Without loss of generality and for the sake of simplicity, let $d^-_\bfnn=0$. Suppose the thesis is false. Then there exists a sequence $k=k(\bfnn)$ such that $|\lambda_{k(\bfnn)}\left(X^{(\bfnn)}\right) - \omega^*\left(k(\bfnn)/(d_\bfnn+1)\right)| >\epsilon$ for some $\epsilon>0$. On the other hand, by the boundedness of $k(\bfnn)/(d_\bfnn+1)$ and by Theorem \ref{thm:discrete_Weyl_law}, passing to a subsequence we can assume that $k(\bfnn)/(d_{\bfnn}+1)$ converges to a point $x_0$ in $[0,1]$ and that \eqref{eq:discrete_Weyl_law2_2} holds. If $\omega^*\left(k(\bfnn)/(d_\bfnn+1)\right)=0$ definitely, then $|\lambda_{k(\bfnn)}\left(X^{(\bfnn)}\right) - \omega^*\left(k(\bfnn)/(d_\bfnn+1)\right)|\to 0$, which is a contradiction. At the same time, if  $\omega^*\left(k(\bfnn)/(d_\bfnn+1)\right)$ is not definitely identical to zero, then by passing again to a subsequence we can assume that $\omega^*\left(k(\bfnn)/(d_\bfnn+1)\right)\neq 0$, and by the boundedness of $R_\omega$
$$
\left|\lambda_{k(\bfnn)}\left(X^{(\bfnn)}\right) - \omega^*\left(\frac{k(\bfnn)}{d_\bfnn+1}\right)\right|\leq c\left|\frac{\lambda_{k(\bfnn)}\left(X^{(\bfnn)}\right)}{ \omega^*\left(\frac{k(\bfnn)}{d_\bfnn+1}\right)} - 1\right| \to 0,
$$
which is again a contradiction. 
\end{proof}

\subsection{Local and maximum spectral relative errors}
Exploiting the results obtained in the preceding section, we can now prescribe a way to measure the maximum relative error between two sequences of eigenvalues. 

Given two sequences of matrices $\{X^{(\bfnn)}\}_\bfnn, \{Y^{(\bfnn)}\}_\bfnn$ of the same dimension $d_\bfnn$, let us use the index notation as in \eqref{def:re-index}, in order to be able to compare the eigenvalues of $X^{(\bfnn)}$ and $Y^{(\bfnn)}$ from the lowest to the highest, for every $\bfnn$. See Figure \ref{fig:new_index}.

\begin{figure}[H]
 \begin{center}
	\begin{tikzpicture}
\draw [ ] ( -6,1)--( 6,1 );
\node ( left_end1 ) at (-6 , 1) {  };
\node ( right_end1 ) at (-6 , 1) {  };
\filldraw[black, fill=white] (0,1) circle (6pt);
\node ( zero1 ) at (0 ,1) {$0$};
\node (lambdaX) at (7.2,1.2) {$\lambda(X^{(6)})$};

\node[align=center,above] (X1) at (-5,1)  {\small{$k=\shortminus 2$}\\\small{$\hat{k}=1$}};
\filldraw[black,] (-5,1) circle (4pt);

\node[align=center,above] (X2) at (-3.2,1) {\small{$k=\shortminus 1$}\\\small{$\hat{k}=2$}};
\filldraw[black,] (-3.2,1) circle (4pt);

\node[align=center,above] (X3) at (1,1) {\small{$k= 1$}\\\small{$\hat{k}=3$}};
\filldraw[black,] (1,1) circle (4pt);

\node[align=center,above] (X4) at (2.5,1) {\small{$k= 2$}\\\small{$\hat{k}=4$}};
\filldraw[black,] (2.5,1) circle (4pt);

\node[align=center,above] (X5) at (4.2,1) {\small{$k= 3$}\\\small{$\hat{k}=5$}};
\filldraw[black,] (4.2,1) circle (4pt);

\node[align=center,above] (X6) at (5.8,1) {\small{$k= 4$}\\\small{$\hat{k}=6$}};
\filldraw[black,] (5.8,1) circle (4pt);

\node[align=center,below] (Y1) at (-5.6,-1)  {\small{$\hat{k}=1$}\\\small{$k=\shortminus 4$}};
\filldraw[black,] (-5.6,-1) circle (4pt);

\node[align=center,below] (Y2) at (-4,-1)  {\small{$\hat{k}=2$}\\\small{$k=\shortminus 3$}};
\filldraw[black,] (-4,-1) circle (4pt);

\node[align=center,below] (Y3) at (-2.5,-1)  {\small{$\hat{k}=3$}\\\small{$k=\shortminus 2$}};
\filldraw[black,] (-2.5,-1) circle (4pt);

\node[align=center,below] (Y4) at (-0.7,-1)  {\small{$\hat{k}=4$}\\\small{$k=\shortminus 1$}};
\filldraw[black,] (-0.7,-1) circle (4pt);

\node[align=center,below] (Y5) at (1.5,-1)  {\small{$\hat{k}=5$}\\\small{$k= 1$}};
\filldraw[black,] (1.5,-1) circle (4pt);

\node[align=center,below] (Y6) at (4.3,-1)  {\small{$\hat{k}=6$}\\\small{$k= 2$}};
\filldraw[black,] (4.3,-1) circle (4pt);

\draw [ ] ( -6,-1)--( 6,-1 );
\node ( left_end2 ) at (-6 , -1) {  };
\node ( right_end2 ) at (-6 , -1) {  };
\filldraw[black, fill=white] (0,-1) circle (6pt);
\node ( zero2 ) at (0 ,-1) {$0$};
\node (lambdaY) at (7.2,-1.3) {$\lambda(Y^{(6)})$};

\draw [dashed] (0,-2)--(0,-1.2); 
\draw [dashed] (0,-0.82)--(0,0.85); 
\draw [dashed] (0,1.2)--(0,2); 

\draw [dashed] (-6,1)--(-7,1); 
\draw [dashed] (6,1)--(7,1); 
\draw [dashed] (6,-1)--(7,-1); 
\draw [dashed] (-6,-1)--(-7,-1);

\draw[black] (4.3,-1) -- (5.8,1);
\draw[black] (1.5,-1) -- (4.2,1);
\draw[black] (-0.7,-1) -- (2.5,1);
\draw[black] (-2.5,-1) -- (1,1);
\draw[black] (-4,-1)  -- (-3.2,1);
\draw[black] (-5.6,-1) -- (-5,1);

	\end{tikzpicture}
\end{center}
\caption{Example of the new index notation $\hat{k}$ used to compare the eigenvalues of two matrices $X^{(\bfnn)}$, $Y^{(\bfnn)}$. In this case we set $\bfnn=6$ and non-null eigenvalues. Observe that the new labeling is made necessary due to the fact that, in general, $X^{(\bfnn)}$ and $Y^{(\bfnn)}$ can have different indices of inertia. In this example, $d^-_6(X^{(6)})=2$ and $d^-_6(Y^{(6)})=4$.}\label{fig:new_index}
\end{figure}
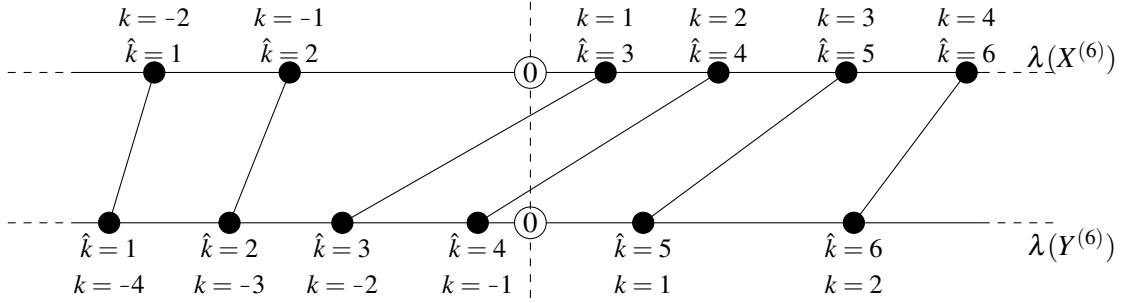

\begin{definition}[Local and maximum spectral relative errors]\label{def:max_spectral_err}
Given two sequences of matrices $\{X^{(\bfnn)}\}_\bfnn, \{Y^{(\bfnn)}\}_\bfnn$ of the same dimension $d_\bfnn$, define the following sequence $\{\mathcal{E}_\bfnn\}$: 
\begin{equation*}
\mathcal{E}_\bfnn:= \max_{\hat{k}=1,\ldots,d_\bfnn}\left\{\delta_{\hat{k}}^{(\bfnn)}\right\}, \qquad \delta_{\hat{k}}^{(\bfnn)}:=\begin{cases}\left| \frac{\lambda_{\hat{k}}\left(X^{(\bfnn)}\right)}{\lambda_{\hat{k}}\left(Y^{(\bfnn)}\right)}-1\right| & \mbox{if } \lambda_{\hat{k}}\left(Y^{(\bfnn)}\right) \neq 0,\\
0 & \mbox{if } \lambda_{\hat{k}}\left(Y^{(\bfnn)}\right) =\lambda_{\hat{k}}\left(X^{(\bfnn)}\right) = 0,\\
\infty & \mbox{if } \lambda_{\hat{k}}\left(X^{(\bfnn)}\right) \neq\lambda_{\hat{k}}\left(Y^{(\bfnn)}\right)=0.
\end{cases}
\end{equation*}
We call $\delta_{\hat{k}}^{(\bfnn)}$ \emph{the local spectral relative error}, and we call $\mathcal{E}:=\limsup_\bfnn \mathcal{E}_\bfnn$ the \emph{maximum spectral relative error}.
\end{definition}

\begin{theorem}\label{thm:necessary_cond_for_uniformity}
Fix two sequences of matrices of the same dimension $d_\bfnn$ such that $\{X^{(\bfnn)}\}_\bfnn \sim_\lambda \omega_1$ and  $\{Y^{(\bfnn)}\}_\bfnn \sim_{\lambda} \omega_2$. If both $\omega_1,\omega_2$ satisfy the hypothesis of Theorem \ref{thm:discrete_Weyl_law}, then
\begin{equation}\label{max_spectral_err_inequ}
\mathcal{E} \geq \max\left\{\sup_{\substack{x\in [0,1];\\x:\omega_2^*(x)\neq 0}} \left| \frac{\omega_1^*(x)}{\omega_2^*(x)} -1 \right|; \sigma\right\},
\end{equation}
where, recalling the definition of $\vartheta_{Y^{(\bfnn)}} : \{-d^-_\bfnn\left(Y^{(\bfnn)}\right) , \ldots,  d^+_\bfnn\left(Y^{(\bfnn)}\right)\} \to \{1,\ldots, d_\bfnn\}$ in \eqref{def:re-index}, 
\begin{equation*}
\sigma:=\begin{cases}
0 & \mbox{if } \omega_2^*(x)\neq 0 \, \forall \, x\in [0,1],\\
\sup_{\substack{k\in \mathbb{Z}\\k\neq 0}} \{\sigma_{k}\} & \mbox{if } \exists \, x_0\in [0,1] \mbox{ such that } \omega_2^*(x_0)=0,
\end{cases} \qquad  \sigma_{k}=\limsup_{\bfnn \to \infty}\left| \frac{\lambda_{\vartheta_{Y^{(\bfnn)}}(k)}\left(X^{(\bfnn)}\right)}{\lambda_{k}\left(Y^{(\bfnn)}\right)}-1\right| .
\end{equation*}
Moreover, if $\omega^*_1, \omega^*_2$ are (left) continuous and both the sequences do not have outliers, then equality holds in \eqref{max_spectral_err_inequ}.
\end{theorem}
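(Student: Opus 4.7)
My plan is to prove the inequality \eqref{max_spectral_err_inequ} and, separately, the reverse inequality under the additional hypotheses, reducing every assertion about $\mathcal{E}$ to the Discrete Weyl's law (Theorem \ref{thm:discrete_Weyl_law}) applied in parallel to $\{X^{(\bfnn)}\}_\bfnn$ and $\{Y^{(\bfnn)}\}_\bfnn$ against the common re-indexing $\hat{k} \in \{1,\ldots,d_\bfnn\}$. This is the natural setting, because both sequences share the total dimension $d_\bfnn$ and the local error $\delta^{(\bfnn)}_{\hat{k}}$ is defined eigenvalue-by-eigenvalue in this sorted index.

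For the $\sigma$ term in the lower bound, the argument is immediate: for each fixed $k \neq 0$ and each $\bfnn$ large enough that $\vartheta_{Y^{(\bfnn)}}(k)$ is defined, Definition \ref{def:max_spectral_err} yields $\mathcal{E}_\bfnn \geq \delta_{\vartheta_{Y^{(\bfnn)}}(k)}^{(\bfnn)}$; passing to $\limsup_\bfnn$ gives $\mathcal{E} \geq \sigma_k$, and the supremum over $k$ gives $\mathcal{E} \geq \sigma$. For the supremum term, I fix an $x_0 \in (0,1)$ with $\omega_2^*(x_0)\neq 0$ that is a continuity point of both $\omega_1^*$ and $\omega_2^*$ (the complement being at most countable by monotonicity) and choose a sequence $\hat{k}(\bfnn)$ with $\hat{k}(\bfnn)/d_\bfnn \to x_0^-$. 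Applying Theorem \ref{thm:discrete_Weyl_law} independently to $\{X^{(\bfnn)}\}$ and $\{Y^{(\bfnn)}\}$ yields $\lim_\bfnn \lambda_{\hat{k}(\bfnn)}(X^{(\bfnn)}) = \omega_1^*(x_0)$ and $\lim_\bfnn \lambda_{\hat{k}(\bfnn)}(Y^{(\bfnn)}) = \omega_2^*(x_0)$, so that $\mathcal{E}\geq \limsup_\bfnn \delta^{(\bfnn)}_{\hat{k}(\bfnn)} = |\omega_1^*(x_0)/\omega_2^*(x_0)-1|$. Taking the supremum over such $x_0$ and using monotonicity (discontinuities and boundary points being approximated by interior continuity points) delivers the full inequality.

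For the equality, under left continuity of $\omega_1^*,\omega_2^*$ and absence of outliers, I argue by contradiction. If $\mathcal{E}$ exceeded the right-hand side of \eqref{max_spectral_err_inequ} by some $\epsilon>0$, there would exist a subsequence of $\bfnn$ and indices $\hat{k}(\bfnn) \in \{1,\ldots,d_\bfnn\}$ with $\delta^{(\bfnn)}_{\hat{k}(\bfnn)} > \max + \epsilon$. By compactness of $[0,1]$ I pass to a further subsequence with $\hat{k}(\bfnn)/d_\bfnn \to x_0 \in [0,1]$. The left continuity of $\omega_i^*$ together with the boundary extension of Theorem \ref{thm:discrete_Weyl_law} (applicable because the no-outlier hypothesis, combined with Corollary \ref{cor:weak_clustering}, covers the endpoints $x_0\in\{0,1\}$) produces $\lambda_{\hat{k}(\bfnn)}(X^{(\bfnn)}) \to \omega_1^*(x_0)$ and $\lambda_{\hat{k}(\bfnn)}(Y^{(\bfnn)}) \to \omega_2^*(x_0)$. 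If $\omega_2^*(x_0)\neq 0$, the local error converges to $|\omega_1^*(x_0)/\omega_2^*(x_0)-1|$, bounded by the supremum term, contradicting the assumption. If $\omega_2^*(x_0)=0$, by monotonicity $\omega_2^*$ vanishes on an initial interval; via a diagonal selection the sequence $\hat{k}(\bfnn)$ can then be reduced to a sequence of fixed indices $k$, each captured by some $\sigma_k$, yielding $\limsup_\bfnn \delta^{(\bfnn)}_{\hat{k}(\bfnn)} \leq \sigma$ and again a contradiction.

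The delicate step is precisely this last case $\omega_2^*(x_0)=0$ in the equality direction: converting an arbitrary subsequence $\hat{k}(\bfnn)$ landing in the zero-region of $\omega_2^*$ into a fixed-index comparison controlled by $\sigma = \sup_k \sigma_k$ requires careful bookkeeping of the relationship between $\hat{k}(\bfnn)$, $d^-_\bfnn(Y^{(\bfnn)})$, and the measure-theoretic width of $\{\omega_2^* = 0\}$, which is exactly why $\sigma$ is defined via the fixed-$k$ family $\sigma_k$ rather than as a pointwise symbol quantity.
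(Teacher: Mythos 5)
Your lower bound and the part of the equality argument at points $x_0$ with $\omega_2^*(x_0)\neq 0$ are correct and essentially follow the paper's own route: reduce $\mathcal{E}$ to limits of local errors along subsequences $\hat{k}(\bfnn_j)/d_{\bfnn_j}\to x_0$ and apply Theorem \ref{thm:discrete_Weyl_law} to $\{X^{(\bfnn)}\}_\bfnn$ and $\{Y^{(\bfnn)}\}_\bfnn$ simultaneously, using the no-outlier hypothesis for the endpoint values $x_0\in\{0,1\}$. The genuine gap is in the remaining case $\omega_2^*(x_0)=0$ of the equality direction. A first (minor) inaccuracy: under the standing hypotheses $\phi_{\omega_2}$ is continuous, so $\omega_2^*$ cannot be constant on a set of positive length; its zero set is a \emph{single} point $x_0$, and it need not be an initial segment of $[0,1]$ --- when $Y^{(\bfnn)}$ has negative eigenvalues, $x_0$ is the interior point with $d^-_\bfnn\left(Y^{(\bfnn)}\right)/d_\bfnn\to x_0$, and after the re-labeling $\vartheta_{Y^{(\bfnn)}}$ the problematic subsequences are exactly those whose signed $Y$-index $k(\bfnn_j)=\vartheta_{Y^{(\bfnn_j)}}^{-1}\left(\hat{k}(\bfnn_j)\right)$ satisfies $k(\bfnn_j)/d_{\bfnn_j}\to 0$.

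The decisive problem is your assertion that ``via a diagonal selection the sequence $\hat{k}(\bfnn)$ can be reduced to a sequence of fixed indices $k$.'' This fails whenever $k(\bfnn_j)\to\pm\infty$ sublinearly, e.g. $k(\bfnn_j)\approx\sqrt{d_{\bfnn_j}}$, so that $k(\bfnn_j)/d_{\bfnn_j}\to 0$ but no subsequence has a constant index: then both $\lambda_{\hat{k}(\bfnn_j)}\left(X^{(\bfnn_j)}\right)$ and $\lambda_{\hat{k}(\bfnn_j)}\left(Y^{(\bfnn_j)}\right)$ tend to $\omega_1^*(x_0)=\omega_2^*(x_0)=0$, the local error is a genuine $0/0$ limit, and no single $\sigma_k$ ``captures'' it, so $\limsup_j\delta^{(\bfnn_j)}_{\hat{k}(\bfnn_j)}\leq\sigma$ does not follow from your selection argument. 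This is precisely the case the paper treats separately: for bounded $\{k(\bfnn_j)\}$ one can indeed pass to a constant-$k$ subsequence and bound by $\sigma_k$, but for $k(\bfnn_j)\to\infty$ one needs the additional estimate $\lim_j\delta^{(\bfnn_j)}_{\hat{k}(\bfnn_j)}\leq\limsup_{k\to\infty}\limsup_{\bfnn\to\infty}\left|\lambda_{\vartheta_{Y^{(\bfnn)}}(k)}\left(X^{(\bfnn)}\right)/\lambda_k\left(Y^{(\bfnn)}\right)-1\right|\leq\sigma$, an interchange-of-limits step that must be stated and justified and which your sketch omits entirely --- even though, as you yourself note, this boundary layer is the very reason $\sigma$ appears in \eqref{max_spectral_err_inequ}.
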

\begin{remark}\label{rem_:0}
In the special case that $d^-_\bfnn\left(Y^{(\bfnn)}\right)=d^-_\bfnn\left(X^{(\bfnn)}\right)$, definitely, then we do not need the re-labeling and $\sigma_k = \limsup_{\bfnn \to \infty} \delta_{k}^{(\bfnn)}$, for any fixed $k \in \mathbb{Z}\setminus\{0\}$.
\end{remark}
\begin{proof}
Let us begin observing that 
$$
\mathcal{E}= \sup \left\{ \lim_{\bfnn_j \to \infty} \delta_{\hat{k}(\bfnn_j)}^{(\bfnn_j)} \, : \,   \left\{\delta_{\hat{k}(\bfnn_j)}^{(\bfnn_j)}\right\}_{\bfnn_j} \in F  \right\} \qquad F:= \left\{ \left\{ \delta_{\hat{k}(\bfnn_j)}^{(\bfnn_j)} \right\}_{\bfnn_j} \, : \, \exists \lim_{\bfnn_j \to \infty} \delta_{\hat{k}(\bfnn_j)}^{(\bfnn_j)} \; \mbox{finite or infinite} \right\}.
$$ 	
%If there exists a subsequence $\{\delta_{\hat{k}(\bfnn_j)}^{(\bfnn_j)}\}_{\bfnn_j}$ such that $\lim_{\bfnn_j \to \infty} \delta_{\hat{k}(\bfnn_j)}^{(\bfnn_j)}=\infty$, then the thesis is trivial. Then, let us suppose that all the subsequences $\{\delta_{\hat{k}(\bfnn_j)}^{(\bfnn_j)}\}_{\bfnn_j} \in F$ have finite limit. 
The thesis will follow immediately if we prove that equality in \eqref{max_spectral_err_inequ} holds  when $\omega^*_1, \omega^*_2$ are left continuous and $\lambda_{k}\left(X^{(\bfnn)}\right)\in R_{\omega_1}, \lambda_{k}\left(Y^{(\bfnn)}\right)\in R_{\omega_2}$ definitely. Indeed, the set of points of jumps for $\omega^*_1$ and $\omega^*_2$ is at most countable, and by passing to a subsequence, for every $x_0\in (0,1)$, $\omega^*_{2}(x_0)\neq 0$, there exists a sequence $\hat{k}(\bfnn(j))$ such that $\hat{k}(\bfnn(j))/d_{\bfnn_j}\to x_0^-$ and the limit $\lim_{\bfnn_j \to \infty} \delta_{\hat{k}(\bfnn_j)}^{(\bfnn_j)}$ exists finite, due to \eqref{eq:discrete_Weyl_law2_1}. Therefore, let us suppose that both $\omega_1, \omega_2$ are continuous and $\{X^{(\bfnn)}\}_\bfnn, \{Y^{(\bfnn)}\}_\bfnn$ do not have outliers. In this case, by Theorem \ref{thm:discrete_Weyl_law}, it happens that for every sequence $\delta_{\hat{k}(\bfnn_j)}^{(\bfnn_j)} \in F$, by passing to a subsequence,
$$
\lim_{\bfnn_j \to \infty} \frac{k(\bfnn_j)}{d_{\bfnn_j}} = x_0^- \in [0,1], \quad \lim_{\bfnn_j \to \infty} \lambda_{\hat{k}(\bfnn_j)}\left(X^{(\bfnn_j)}\right) = \omega_1^*(x_0) \quad \mbox{ and }\quad \lim_{\bfnn_j \to \infty} \lambda_{\hat{k}(\bfnn_j)}\left(Y^{(\bfnn_j)}\right) = \omega_2^*(x_0). 
$$
If $\omega^*_{2}(x)\neq 0$ for every $x\in [0,1]$, then we can conclude that 
$$
\mathcal{E} = \sup_{x\in [0,1]} \left| \frac{\omega_1^*(x)}{\omega_2^*(x)} -1 \right|.
$$
Notice that there can exists at most one point $x_0\in [0,1]$ such that $\omega_2^*(x_0)=0$. If $\omega_1^*(x_0)\neq 0$ then we can conclude that $\mathcal{E}=\infty$. Let us suppose now that $d_\bfnn=n$ and that $\omega^*_1(x),\omega^*_2(x)> 0$ in $(0,1]$, $\omega^*_1(0)=\omega_2^*(0)=0$ (such that, in particular, $d_n^-(X^{(n)})=d_n^-(Y^{(n)})=0$). If $k(n_j)/n_j\to x_0\in (0,1]$, then again it holds that 
$$
\lim_{n_j \to \infty} \delta_{k(n_j)}^{(n_j)} = \left|\frac{\omega_1^*(x_0)}{\omega_2^*(x_0)} -1\right|.
$$
We need then to study what happens to $\lim_{n_j \to \infty} \delta_{k(n_j)}^{(n_j)}$ when $k(n_j)/n_j\to 0$. If $\left\{k(n_j)\right\}$ is bounded, then
$$
\lim_{n_j \to \infty} \delta_{k(n_j)}^{(n_j)} \leq s:=\sup_{k \in \N}\left\{\limsup_{n_j \to \infty}\left| \frac{\lambda_{k}\left(X^{(n_j)}\right)}{\lambda_{k}\left(Y^{(n_j)}\right)}-1\right|\right\} .
$$
If $k(n_j)\to \infty$, then 
$$
\lim_{n_j \to \infty} \delta_{k(n_j)}^{(n_j)} = \lim_{n_j \to \infty} \left| \frac{\lambda_{k(n_j)}\left(X^{(n_j)}\right)}{\lambda_{k(n_j)}\left(Y^{(n_j)}\right)}-1\right| \leq \limsup_{k \to \infty}\limsup_{n_j \to \infty}\left| \frac{\lambda_{k}\left(X^{(n_j)}\right)}{\lambda_{k}\left(Y^{(n_j)}\right)}-1\right|\leq s.
$$
Therefore,
$$
\sup \left\{ \lim_{n_j \to \infty} \delta_{k(n_j)}^{(n_j)} \, : \,   \left\{\delta_{k(n_j)}^{(n_j)}\right\}_{n_j} \in F  \mbox{ and } k(n_j)/n_j \to 0 \right\} =s.
$$
In the general case, for $d_\bfnn^-\left(Y^{(\bfnn)}\right)>0$, the thesis follows by the same arguments after a suitable re-labeling of the indices. Indeed, observe that $\omega_2^*(x_0)=0$ if and only if $d^-_{\bfnn}\left(Y^{(\bfnn)}\right)/d_{\bfnn}\to x_0$ and that $\hat{k}(\bfnn_j)/d_{\bfnn_j}\to x_0 \in [0,1]$ if and only if $k(\bfnn_j)/d_{\bfnn_j}\to 0$, where $k(\bfnn_j)$ is the index of the eigenvalues of $Y^{(\bfnn)}$. Since
$$
 \delta_{k(\bfnn_j)}^{(\bfnn_j)} = \left| \frac{\lambda_{\hat{k}(\bfnn_j)}\left(X^{(\bfnn_j)}\right)}{\lambda_{\hat{k}(\bfnn_j)}\left(Y^{(\bfnn_j)}\right)}-1\right|=\left| \frac{\lambda_{\vartheta_{Y^{(\bfnn)}}(k(\bfnn_j))}\left(X^{(\bfnn_j)}\right)}{\lambda_{k(\bfnn_j)}\left(Y^{(\bfnn_j)}\right)}-1\right|,
$$ 
we can conclude. 
\end{proof}

\subsection{Linear self-adjoint differential operators and eigenvalue distribution}\label{ssec:application_to_self-adjoint_operators}
The asymptotic distribution of the eigenvalues for partial differential operators on general manifolds has been widely studied and developed, see for example \cite{Safarov,Levendorskii} and all the references therein. The topic is too vast to cover it properly, therefore we will concentrate our examples only to a couple of cases: Sturm-Liouville operators for the one dimensional case and elliptic self-adjoint operators for the multi-dimensional case, see Section \ref{sec:example} and Appendix \ref{sec:theory}. Nevertheless, the tools presented in this section can be applied to study the quality of a discretization scheme to preserve the discrete spectrum of many classes of self-adjoint operators. The approach is the following: given an operator $\mathcal{L}$ and its discretized version $\mathcal{L}^{(\bfnn)}$, if $\{h(d_\bfnn)\mathcal{L}^{(\bfnn)}\}_\bfnn \sim_{\lambda} \omega$ for a function $h : \N \to \R$ (which depends on the dimension of the underlying space and the higher order of derivatives involved), then study the asymptotic expression 
\begin{equation}\label{eq:continuous_weyl_function}
\zeta_\bfnn(t)=\frac{N^{(\bfnn)}\left(\mathcal{L},t\right)}{d_\bfnn}:=\frac{\left|k=-d^-_\bfnn,\ldots,d^+_\bfnn \, : \, h(d_\bfnn)\lambda_k\left(\mathcal{L}\right)\leq t\right|}{d_\bfnn}.
\end{equation}
We have the following result.
\begin{theorem}\label{thm:MSRE}
Let $\mathcal{L}$ be a self-adjoint linear operator and let  $\mathcal{L}^{(\bfnn)}$ be a $d_\bfnn\times d_\bfnn$ matrix obtained from $\mathcal{L}$ by a discretization scheme. Let $d^-_\bfnn, d^+_\bfnn$ be the negative and non-negative indices of inertia of $\mathcal{L}^{(\bfnn)}$. Suppose that:
\begin{enumerate}[(i)]
	\item $\lambda_{k}\left(\mathcal{L}^{(\bfnn)}\right) \to \lambda_{k}\left(\mathcal{L}\right)$ as $\bfnn \to \infty$ for every fixed $k \in \mathbb{Z}\setminus\{0\}$;\label{MSRE:item1}
	\item $\{h(d_\bfnn)\mathcal{L}^{(\bfnn)}\}_\bfnn \sim_{\lambda} \omega$ for some fixed $h : \N \to \R$;\label{MSRE:item2}
	%\item $h(d_\bfnn)\lambda_k\left(\mathcal{L}^{(\bfnn)} \right) \in R_\omega$ definitely for every $k=-d^-_\bfnn,\ldots,d^+_\bfnn$;\label{MSRE:item3}
	\item $\omega$ satisfies the condition of Theorem \ref{thm:discrete_Weyl_law};\label{MSRE:item4}
	\item $\lim_{\bfnn \to \infty} \zeta_\bfnn(t) = \zeta(t)$, where $\zeta_\bfnn$ is defined in \eqref{eq:continuous_weyl_function} and $\zeta : \R \to [0,1]$ is continuous; \label{MSRE:item5}
%	\item $\zeta^*$ continuous.\label{MSRE:item6}
\end{enumerate}
Then 
\begin{equation*}
\mathcal{E} \geq \sup_{\substack{x\in(0,1)\\x\,:\, \zeta^*(x)\neq 0}} \left|  \frac{\omega^*(x)}{\zeta^*(x)} -1\right|.
\end{equation*}
Moreover, if $\omega^*$,$\zeta^*$ are continuous and $h(d_\bfnn)\lambda_k\left(\mathcal{L}^{(\bfnn)} \right) \in R_\omega$ definitely, then equality holds.
\end{theorem}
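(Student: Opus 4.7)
The plan is to reduce Theorem \ref{thm:MSRE} to a direct application of Theorem \ref{thm:necessary_cond_for_uniformity}. Set
\begin{equation*}
X^{(\bfnn)}:=h(d_\bfnn)\mathcal{L}^{(\bfnn)}, \qquad Y^{(\bfnn)}:=\mbox{diag}\bigl(h(d_\bfnn)\lambda_k(\mathcal{L})\bigr),\; k\in\{-d_\bfnn^-,\ldots,d_\bfnn^+\}\setminus\{0\},
\end{equation*}
using the same indices of inertia as $\mathcal{L}^{(\bfnn)}$. Since the common prefactor $h(d_\bfnn)$ cancels in every ratio $\lambda_{\hat k}(X^{(\bfnn)})/\lambda_{\hat k}(Y^{(\bfnn)})$, the maximum spectral relative error between $\{X^{(\bfnn)}\}$ and $\{Y^{(\bfnn)}\}$ in the sense of Definition \ref{def:max_spectral_err} coincides with $\mathcal{E}$ from \eqref{max_rel_spectrum}.

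By hypothesis (ii), $\{X^{(\bfnn)}\}_\bfnn\sim_\lambda\omega$, and by (iii) $\omega$ satisfies the assumptions of Theorem \ref{thm:discrete_Weyl_law}. For the second sequence, note that the s.p.m.\ associated to $Y^{(\bfnn)}$ has distribution function exactly $\zeta_\bfnn$ from \eqref{eq:continuous_weyl_function}. By (iv), $\zeta_\bfnn\to\zeta$ pointwise with $\zeta$ continuous, so by P\'olya's theorem the convergence is uniform and the associated s.p.m.\ converges vaguely to the probability measure with distribution function $\zeta$. A direct computation from \eqref{eq:rearrangment}--\eqref{eq:rearrangment2} gives $\phi_{\zeta^*}=\zeta$, so Lemma \ref{lem:equivalent_definition_distribution} yields $\{Y^{(\bfnn)}\}_\bfnn\sim_\lambda\zeta^*$. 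Continuity of $\zeta$ is equivalent to $\zeta^*$ fulfilling the hypothesis of Theorem \ref{thm:discrete_Weyl_law}, and because $\zeta^*$ is already nondecreasing on $(0,1)$ its own monotone rearrangement equals $\zeta^*$.

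Applying Theorem \ref{thm:necessary_cond_for_uniformity} to the pair $(\{X^{(\bfnn)}\},\{Y^{(\bfnn)}\})$ with symbols $\omega$ and $\zeta^*$ then yields
\begin{equation*}
\mathcal{E}\;\geq\;\max\Bigl\{\sup_{\substack{x\in[0,1]\\ \zeta^*(x)\neq 0}}\Bigl|\frac{\omega^*(x)}{\zeta^*(x)}-1\Bigr|,\;\sigma\Bigr\}\;\geq\;\sup_{\substack{x\in(0,1)\\ \zeta^*(x)\neq 0}}\Bigl|\frac{\omega^*(x)}{\zeta^*(x)}-1\Bigr|,
\end{equation*}
which is the first assertion.

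For the equality part, the plan is to invoke the equality statement of Theorem \ref{thm:necessary_cond_for_uniformity}, which requires continuity of $\omega^*$ and $\zeta^*$ together with the absence of outliers in both sequences. Continuity is assumed, and the absence of outliers for $\{X^{(\bfnn)}\}$ is exactly $h(d_\bfnn)\lambda_k(\mathcal{L}^{(\bfnn)})\in R_\omega$; the eigenvalues of $Y^{(\bfnn)}$ lie in $R_{\zeta^*}$ because $\zeta^*$ is built as the monotone rearrangement of the limit distribution $\zeta$ of the empirical CDFs generated by those very eigenvalues. Finally, the term $\sigma$ vanishes: for each fixed $k\neq 0$ hypothesis (i) forces $\lambda_k(\mathcal{L}^{(\bfnn)})$ and $\lambda_k(\mathcal{L})$ to share sign for $\bfnn$ large, so the re-indexing $\vartheta_{Y^{(\bfnn)}}$ appearing in Theorem \ref{thm:necessary_cond_for_uniformity} eventually acts as the identity at every fixed $k$, giving $\sigma_k=\limsup_\bfnn|\lambda_k(\mathcal{L}^{(\bfnn)})/\lambda_k(\mathcal{L})-1|=0$ by (i), hence $\sigma=0$. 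The main obstacle will be the careful verification that $Y^{(\bfnn)}$ is indeed outlier-free with respect to $R_{\zeta^*}$---namely that the rescaled exact eigenvalues of $\mathcal{L}$ remain inside the essential range of the rearrangement of the limit Weyl distribution---together with the bookkeeping of inertia indices when $d_\bfnn^-(X^{(\bfnn)})$ and $d_\bfnn^-(Y^{(\bfnn)})$ need not agree at finite $\bfnn$; once these subtleties are handled, the conclusion is a direct application of the machinery developed in the preceding subsections.
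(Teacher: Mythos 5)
Your proposal is correct and follows essentially the same route as the paper: it builds the comparison sequence $Y^{(\bfnn)}=h(d_\bfnn)\,\mathrm{diag}_k\{\lambda_k(\mathcal{L})\}$ with the inertia indices of $\mathcal{L}^{(\bfnn)}$, identifies $\zeta_\bfnn$ as its empirical distribution so that $\{Y^{(\bfnn)}\}_\bfnn\sim_\lambda\zeta^*$ via Lemma \ref{lem:equivalent_definition_distribution} and Theorem \ref{thm:discrete_Weyl_law}, and then applies Theorem \ref{thm:necessary_cond_for_uniformity}, with hypothesis (i) forcing $\sigma=0$. The only difference is presentational: the ``obstacles'' you flag are handled by the very construction you chose (since $d^-_\bfnn(X^{(\bfnn)})=d^-_\bfnn(Y^{(\bfnn)})$ by definition of $Y^{(\bfnn)}$, Remark \ref{rem_:0} makes the re-labeling trivial, and $\lambda_k(Y^{(\bfnn)})\in R_{\zeta^*}=\overline{\zeta^*((0,1))}$ is exactly the point the paper asserts from the same ingredients), so no additional argument beyond what you outline is needed.
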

\begin{proof}
	 Define 
	$$
	X^{(\bfnn)}:= h(d_\bfnn) \mathcal{L}^{(\bfnn)}, \qquad Y^{(\bfnn)}:= h(d_\bfnn)\diag_{\substack{k=-d^-_\bfnn,\ldots,d^+_\bfnn\\k\neq 0}} \left\{ \lambda_{k}\left(\mathcal{L}\right) \right\}.
	$$
By Item \eqref{MSRE:item5}, Lemma \ref{lem:equivalent_definition_distribution} and Theorem \ref{thm:discrete_Weyl_law}, it follows immediately that $\left\{Y^{(\bfnn)}\right\} \sim_\lambda \zeta^*$, that $\lambda_k\left(Y^{(\bfnn)} \right) \in R_{\zeta^*}= \overline{\zeta^*((0,1))}$ for every $k=-d^-_\bfnn,\ldots,d^+_\bfnn$, and that therefore the sequence $\left\{Y^{(\bfnn)}\right\}$ satisfies the hypothesis of Theorem \ref{thm:necessary_cond_for_uniformity}. By items \eqref{MSRE:item2}-\eqref{MSRE:item4}, we have that $\left\{X^{(\bfnn)}\right\}\sim_\lambda \omega$ and it satisfies too the hypothesis of Theorem \ref{thm:necessary_cond_for_uniformity}. Therefore, 
$$
\mathcal{E} \geq \max \left\{\sup_{\substack{x\in(0,1)\\x\,:\, \zeta^*(x)\neq 0}} \left|  \frac{\omega^*(x)}{\zeta^*(x)} -1\right|; \sigma \right\}.
$$ 
By Item  \eqref{MSRE:item1}, it follows that $\sigma=0$ and we conclude.
\end{proof}

The (weighted) Weyl function $\zeta$ is known for many kind of differential operators. See Subsection \ref{ssec:FD_nonuniform} and Subsection \ref{ssec:generalization} for a couple of examples.

\section{Numerical experiments}
All the computations are performed on MATLAB R2018b running on a desktop-pc with an Intel i5-4460H @3.20 GHz CPU and 8GB of RAM.
\subsection{Application to Euler-Cauchy differential operator}\label{sec:example}
We begin our analysis with respect to a toy-model example. In this subsection, the main focuses are:
\begin{itemize}
	\item to show numerical evidences of Theorem \ref{thm:necessary_cond_for_uniformity}, i.e., that the monotone rearrangement $\omega^*(x)$ of the spectral symbol $\omega(\bfyy)$ measures the maximum spectral relative error $\mathcal{E}$. See Subsections \ref{ssec:example_uniform_3_points}-\ref{ssec:FD_nonuniform}-\ref{ssec:galerkin};
	\item to disprove that, in general, a uniform sampling of the spectral symbol $\omega(\bfyy)$ can provide an accurate approximation of the eigenvalues of the weighted and un-weighted discrete operators $\hat{\mathcal{L}}^{(n)}$ and $\mathcal{L}^{(n)}$, respectively. See Subsection \ref{ssec:example_uniform_3_points};
	\item to show that a discretization scheme can lead to $\mathcal{E}=0$ if coupled with a suitable (possibly non-uniform) grid and an increasing order of approximation. See Subsections \ref{ssec:FD_nonuniform}-\ref{ssec:galerkin}.
\end{itemize}

Let us fix $\alpha>0$ and let us consider the following self-adjoint operator with Dirichlet BCs,
\begin{equation}\label{eq:Euler-Cauchy}
\Eulerc[u](x) := -\left(\alpha x^2 u'(x)\right)', \qquad \textnormal{dom}\left(\Eulerc\right)=W^{1,2}_0((1,\textrm{e}^{\sqrt{\alpha}})).
\end{equation} 
%with domain 
%\begin{align}
%\textnormal{dom}\left(\Eulerc\right)= &\left\{u \in \textnormal{L}^2([1,\textrm{e}^{\sqrt{\alpha}}]) \, : \, u'\in \textnormal{AC}((1,\textrm{e}^{\sqrt{\alpha}})),\right. \nonumber\\
%&\left.\quad \Eulerc[u] \in \textnormal{L}^2([1,\textrm{e}^{\sqrt{\alpha}}]), \,  u(1)=u(\textrm{e}^{\sqrt{\alpha}})=0\right\}. \label{eq:Euler-Cauchy2}
%\end{align}     
The formal equation is an Euler-Cauchy differential equation and by means of the Liouville transformation 
\begin{equation}\label{eq:liouville_transform}
y(x)= \int_1^x \left(\sqrt{\alpha}t\right)^{-1}dt,
\end{equation}
the operator \eqref{eq:Euler-Cauchy} is (spectrally) equivalent to 
\begin{equation}\label{eq:Euler-Cauchy_normal_form}
\Delta_{\textnormal{dir},\alpha}[v](y):= -v''(y) - \frac{\alpha}{4}v(y), \qquad \textnormal{dom}\left(\Delta_{\textnormal{dir},\alpha}\right)=W^{1,2}_0((0,1))),
\end{equation} 
which is a self-adjoint operator in Schroedinger form with constant potential $V(y)\equiv -\alpha/4$. For a general review, see for example \cite{Davies,Z15,EM99}. It is clear that
$$
\lambda_k \left(\Eulerc\right) = \lambda_k\left(\Delta_{\textnormal{dir},\alpha} \right) = \lambda_k\left(\Delta_{\textnormal{dir}} \right) +\frac{\alpha}{4} = k^2 \pi^2 +\frac{\alpha}{4}\quad \mbox{for every } k\geq 1,
$$    
where
\begin{equation*}
\Delta_{\textnormal{dir}}[v](y):= -v''(y), \qquad \textnormal{dom}\left(\Delta_{\textnormal{dir}}\right)=W^{1,2}_0((0,1))).
\end{equation*} 
For later reference, notice that
\begin{equation}\label{eq:shift}
\lim_{\alpha \to 0} \lambda_k \left(\Eulerc \right) = \lambda_k\left(\Delta_{\textnormal{dir}}  \right) = k^2 \pi^2 \quad \mbox{for every } k\geq 1,
\end{equation}
namely, the diffusion coefficient $p(x)=\alpha x^2$ produces a constant shift of $\alpha/4$ to the eigenvalues of the unperturbed Laplacian operator with Dirichlet BCs, i.e., $\Delta_{\textnormal{dir}}$.

We introduce the following definition.

\begin{definition}[Numerical and analytic spectral relative errors]\label{def:num_anal_error}
	Let $\Euler$ be the discrete differential operator obtained from \eqref{eq:Euler-Cauchy} by means of a generic numerical discretization method. If 
	$$
	\left\{(d_n+1)^{-2}\;\;\Euler\right\}_n \sim_{\lambda} \prescript{}{\alpha}{\omega}(x,\theta), \qquad (x,\theta) \in [1,\textrm{e}^{\sqrt{\alpha}}]\times [-\pi,\pi],
	$$
	then fix $n,n',r \in \N$, with $n'>>n$ and $r=r(n)\geq n$ and compute the following quantities
	$$
	\numerr = \left| \frac{\lambda_k\left(\Euler\right)}{\lambda_k\left(\mathcal{L}_{\textnormal{dir},\alpha x^2}^{(n')}\right)} -1\right|, \qquad 
	\analerr = \left| \frac{(d_n+1)^2\prescript{}{\alpha}{\omega^{*,(n)}_{r,k}}}{\lambda_k\left(\mathcal{L}_{\textnormal{dir},\alpha x^2}^{(n')}\right)} -1 \right|\qquad \mbox{for }k=1,\cdots, d_n.
	$$
	Specifically, $\prescript{}{\alpha}{\omega^{*,(n)}_{r,k}}= \prescript{}{\alpha}{\omega^*_{r}}\left(\frac{k}{d_n+1}\right)$, where $\prescript{}{\alpha}{\omega^*_{r}}$ is the (approximated) monotone rearrangement of the spectral symbol $\prescript{}{\alpha}{\omega}$ obtained by the procedure described in the algorithm of Section \ref{ss:rearrangment}. We call $\numerr$ the {\em numerical spectral relative error} and $\analerr$ the {\em analytic spectral relative error}. The difference between these definitions and \ref{def:max_spectral_err} is that in this case we are using as a comparing sequence the eigenvalues of the same discrete operator on a finer mesh, since supposedly we do not know the exact eigenvalues of the continuous operator but we do know that $\lambda_k\left(\mathcal{L}_{\textnormal{dir},\alpha x^2}^{(n')}\right)$ converges to the exact eigenvalue as $n'\to \infty$. We say that $\prescript{}{\alpha}{\omega}$ \emph{spectrally approximates} the discrete differential operator $\Euler$ if 
	$$
	\limsup_{n \to \infty}\; \left(\analerr\right) =0 \qquad \mbox{for every fixed } k.
	$$
\end{definition}

\subsubsection{Approximation by $3$-points central FD method on uniform grid}\label{ssec:example_uniform_3_points}

In our example, if we apply the standard central $3$-point FD scheme as in Appendix \ref{ssec:FD} with $\eta=1$ and $\tau(x)=x$, then the sequence of the weighted discretization matrices $\Eulerw= (n+1)^{-2}\Euler$ of the operator \eqref{eq:Euler-Cauchy} has spectral symbol 
$$
\prescript{}{\alpha}{\omega(x,\theta)}= \frac{\alpha x^2}{\left(\textrm{e}^{\sqrt{\alpha}}-1\right)^2}4\sin^2\left(\frac{\theta}{2}\right), \qquad \mbox{with}\quad D=[1,\textrm{e}^{\sqrt{\alpha}}]\times [0,\pi].
$$

Working with this toy-model problem in the $3$-points central FD scheme provides us a further advantage, since we can analytically calculate the monotone rearrangement $\prescript{}{\alpha}{\omega^*}$, or at least a finer approximation than $\prescript{}{\alpha}{\omega^*_r}$  which does not depend on the extra parameter $r$ and is less computationally expensive. Indeed, from equation \eqref{eq:rearrangment2} we have that
\begin{equation}\label{eq:phi_alpha}
\phi_{\prescript{}{\alpha}{\omega}} : \left[0,\frac{4\alpha\textrm{e}^{2\sqrt{\alpha}}}{\left(\textrm{e}^{\sqrt{\alpha}}-1\right)^2}\right] \to \left[0,1\right], \qquad 
\end{equation}
where
\begin{align*}
\phi_{\prescript{}{\alpha}{\omega}}(t)  &= \frac{1}{\pi\left(\textrm{e}^{\sqrt{\alpha}}-1\right)} m \left\{(x,\theta) \in [1,\textrm{e}^{\sqrt{\alpha}}]\times [0,\pi] \, : \, \frac{\alpha x^2\left(2-2\cos(\theta)\right)}{\left(\textrm{e}^{\sqrt{\alpha}}-1\right)^2} \leq t  \right\} \\
&= \frac{1}{\pi\left(\textrm{e}^{\sqrt{\alpha}}-1\right)}\cdot \begin{cases}
\int_1^{\textrm{e}^{\sqrt{\alpha}}} 2\arcsin\left( \frac{\left(\textrm{e}^{\sqrt{\alpha}}-1\right)\sqrt{t}}{2\sqrt{\alpha}x}\right) \, m(dx) & \mbox{if } t \in \left[0, \frac{4\alpha}{\left(\textrm{e}^{\sqrt{\alpha}}-1\right)^2}\right],\nonumber\\
\pi\left(\frac{\left(\textrm{e}^{\sqrt{\alpha}}-1\right)\sqrt{t}}{2\sqrt{\alpha}}-1\right) + \int_{\frac{\left(\textrm{e}^{\sqrt{\alpha}}-1\right)\sqrt{t}}{2\sqrt{\alpha}}}^{\textrm{e}^{\sqrt{\alpha}}} 2\arcsin\left( \frac{\left(\textrm{e}^{\sqrt{\alpha}}-1\right)\sqrt{t}}{2\sqrt{\alpha}x}\right) \, m(dx) & \mbox{if } t \in \left[\frac{4\alpha}{\left(\textrm{e}^{\sqrt{\alpha}}-1\right)^2}, \frac{4\alpha\textrm{e}^{2\sqrt{\alpha}}}{\left(\textrm{e}^{\sqrt{\alpha}}-1\right)^2}\right]
\end{cases}\nonumber\\
&= \frac{1}{\pi\left(\textrm{e}^{\sqrt{\alpha}}-1\right)}\cdot \begin{cases}
\Phi\left(t,\textrm{e}^{\sqrt{\alpha}}\right) - \Phi\left(t,1\right)& \mbox{if } t \in \left[0, \frac{4\alpha}{\left(\textrm{e}^{\sqrt{\alpha}}-1\right)^2}\right],\nonumber\\
\pi\left(\frac{\left(\textrm{e}^{\sqrt{\alpha}}-1\right)\sqrt{t}}{2\sqrt{\alpha}}-1\right) + 
\left[ \Phi\left(t,\textrm{e}^{\sqrt{\alpha}}\right) - \Phi\left(t,\frac{\left(\textrm{e}^{\sqrt{\alpha}}-1\right)\sqrt{t}}{2\sqrt{\alpha}}\right)  \right]& \mbox{if } t \in \left[\frac{4\alpha}{\left(\textrm{e}^{\sqrt{\alpha}}-1\right)^2}, \frac{4\alpha\textrm{e}^{2\sqrt{\alpha}}}{\left(\textrm{e}^{\sqrt{\alpha}}-1\right)^2}\right],
\end{cases}\nonumber
\end{align*}
and where
\begin{equation*}
\Phi(t,x) = \frac{\left(\textrm{e}^{\sqrt{\alpha}}-1\right)\sqrt{t}\log\left(2\alpha x\left(\sqrt{1-\frac{\left(\textrm{e}^{\sqrt{\alpha}}-1\right)^2}{4\alpha x^2}}\right) +1\right)}{\sqrt{\alpha}} + 2x\arcsin\left( \frac{\left(\textrm{e}^{\sqrt{\alpha}}-1\right)\sqrt{t}}{2\sqrt{\alpha}x}\right).
\end{equation*}

Since we have an analytic expression for $\phi_{\prescript{}{\alpha}{\omega}}(t)$, it is then possible to compute a numerical approximation of its generalized inverse $\prescript{}{\alpha}{\omega^*}$over the uniform grid $\left\{ \frac{k}{n+1} \right\}_{k=1}^n$, for example by means of a Newton method. This approximation of the monotone rearrangement does not depend on the extra parameter $r$: therefore, when we will compute the analytical spectral relative error with respect to $\prescript{}{\alpha}{\omega^*}$ we will write $\analerrExact$ without the subscript $r$.

\begin{figure}[H]
	\centering
	\includegraphics[width=12cm]{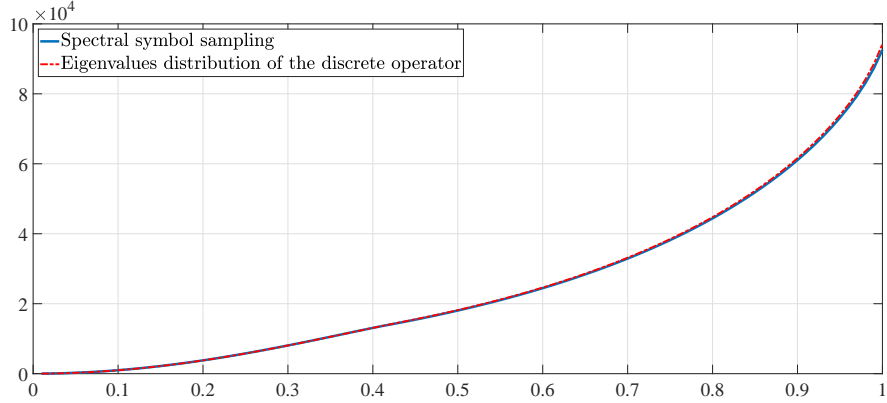}
	\captionof{figure}{For $\alpha=1$, comparison between the distribution of the first $n=10^2$ eigenvalues of the discrete operator $\Euler$ (red-dotted line) and the $n$-equispaced samples of $(n+1)^2\prescript{}{\alpha}{\omega^*_r}$ with $r=10^3$ (blue-continuous line). On the $x$-axis is reported the quotient $k/n$, for $k=1,\ldots,n$. The sovrapposition of the graphs is explained by Theorem \ref{thm:discrete_Weyl_law} and the limit \eqref{eq:discrete_Weyl_law2_2}.}\label{fig:eig_symbol_comparison}
	\end{figure}

In Figure \ref{fig:eig_symbol_comparison} it is possible to check that an equispaced sampling of $(n+1)^2\prescript{}{\alpha}{\omega^*_r}$ asymptotically distributes exactly as the eigenvalues of the unweighted discrete operator $\Euler$. Indeed, $\phi_{\prescript{}{\alpha}{\omega}}$ is continuous and strictly monotone increasing which implies that $\prescript{}{\alpha}{\omega}^*$ is continuous: then relation \eqref{eq:discrete_Weyl_law2_2} applies.  

Moreover, according with equation \eqref{eq:shift}, we observe that 
\begin{equation*}
\lim_{\alpha \to 0} \phi_{\prescript{}{\alpha}{\omega}}(t) = \frac{2}{\pi}\arcsin\left(\frac{\sqrt{t}}{2}\right) \qquad t\in [0,4], \qquad \lim_{\alpha \to 0}\prescript{}{\alpha}{\omega^*} (x) = 4\sin^2\left(\frac{\pi x}{2}\right) \qquad x \in [0,1],
\end{equation*}
which means that the  monotone rearrangement $\prescript{}{\alpha}{\omega^*}$ converges to the spectral symbol $\omega$ as $\alpha \to 0$, that is, to the spectral symbol which characterizes the differential operator $\Delta_{\textnormal{dir}}$ discretized by means of a 3-points FD scheme.  %In Figure \ref{fig:symbols_distribution_comparison} and Table \ref{table:uniform_convergence_symbol_alpha} it is visually and numerically summarized this last observation. 
The eigenvalues of $(n+1)^{-2}\left(\Delta_{\textnormal{dir}}^{(n)}\right)$ are the exact sampling of $\omega(\theta)=4\sin^2\left(\theta/2\right)$ over the uniform grid $\left\{ \frac{k\pi}{n+1} \right\}_{k=1}^n$, see \cite[p. 154]{Smith85}. This asymptotic behaviour reflects what we already observed in \eqref{eq:shift}.

All these remarks would suggest that $\prescript{}{\alpha}{\omega^*}$, or equivalently $\prescript{}{\alpha}{\omega^*_r}$, spectrally approximates the weighted discrete operator $\Eulerw$.  

%\begin{center}
%	\begin{figure}[ht]
%	\includegraphics[width=15cm]{symbols_distribution_comparison.eps}
%	\captionof{figure}{Graphs of the monotone rearrangment of the spectral symbols $\prescript{}{\alpha}{\omega^*_r}$ for different values of $\alpha$ and fixed $n=10^2, r=10^3$. In a black-line with star-shaped dots it is drawn the spectral symbol $\omega(\theta)=2-2\cos(\theta\pi)$, which is the spectral symbol for the limit case $\alpha=0$.}\label{fig:symbols_distribution_comparison}
%	\end{figure}
%\end{center}

%\begin{table}[ht]
%	\centering
%	\begin{tabular}{c|c|c|c|c|} 
%		\cline{2-5}
%		& $\alpha=1$         & $\alpha=10e-02$    & $\alpha=10e-05$    & $\alpha=10e-10$     \\ 
%		\hline
%		\multicolumn{1}{|l|}{$\left\|\prescript{}{\alpha}{\omega^*} - \omega\right\|_\infty$} & 5.8049  & 0.3912 &  0.0103&  4.2392e-06 \\
%		\hline
%	\end{tabular}\captionof{table}{for $\alpha \to 0$, we confront the sup-norm of the difference between the monotone rearrangment $\prescript{}{\alpha}{\omega^*}$ and the spectral symbol $\omega(\theta)=2-2\cos(\theta\pi)$, which characterizes the 3-point FD scheme discretization of the $1d$ Dirichlet Laplacian over $[0,1]$. The pointwise evaluation of $\prescript{}{\alpha}{\omega^*}$ and $\omega$ is made over the uniform grid $\left\{\frac{k}{n+1}\right\}_{k=1}^n$ with $n=10^3$, whereas the approximation of $\prescript{}{\alpha}{\omega^*}$ is obtained evaluating $\prescript{}{\alpha}{\phi^{-1}}$ from \eqref{eq:phi_alpha} by means of the \texttt{fzero()} function from \textsc{Matlab} r2018b.}\label{table:uniform_convergence_symbol_alpha}
%\end{table}

Unfortunately, this conjecture looks to be partially proven wrong by Figure \ref{fig:analytical_err_1}: it shows the comparison between the graphs of the numerical spectral relative error $\numerr$ and the analytic spectral relative error $\analerr$, for several different increasing values of the parameter $r$. We observe a discrepancy in the analytical prediction of the eigenvalue error $\analerr$, for small $k<<n$, with respect to the numerical relative error $\numerr$.

	\begin{figure}
		\centering
	\includegraphics[width=12cm]{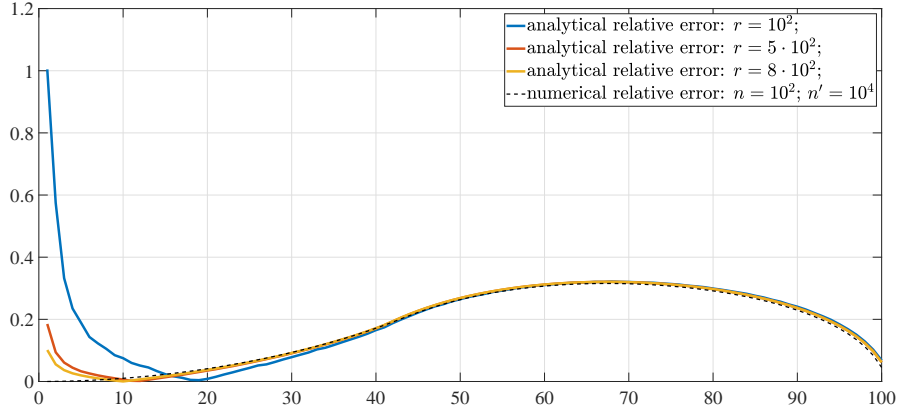}
	\captionof{figure}{Comparison between the numerical spectral relative error $\textbf{err}_k^{(n)}$ and the analytic spectral relative error $\tilde{\textbf{err}}_{k,r}^{(n)}$ for increasing $r=10^2,5\cdot 10^2,8\cdot10^2$. The values of $n$ and $n'$ are fixed at $10^2$ and $10^4$, respectively, and $\alpha=1$. The maximum discrepancy between the numerical relative error and the analytical relative errors is achieved for $k=1$ in all the three cases, and apparently it decreases as $r$ increases.}\label{fig:analytical_err_1}
		\end{figure}

In particular, the maximum discrepancy is achieved at $k=1$, for every $r$. The discrepancy apparently decreases as the number of grid points $r$ increases, as well observed in \cite[Figure 48]{GSERSH18} for some test-problems in the setting of Galerkin discretization by linear $C^0$ B-spline. In that same paper, some plausible hypothesis and suggestions were advanced:
\begin{itemize}
	\item the discrepancy could depend on the fact that it was used $\prescript{}{\alpha}{\omega^*_r}$ instead of $\prescript{}{\alpha}{\omega^*}$, and then that discrepancy should tend to zero in the limit $r\to \infty$, since $\prescript{}{\alpha}{\omega^*_r} \to \prescript{}{\alpha}{\omega^*}$.
	\item numerical instability of the analytic relative error $\analerr$ for small eigenvalues,\cite[Remark 3.1]{GSERSH18}.
	\item Change the sampling grid into an \textquotedblleft almost uniform\textquotedblright grid: see \cite[Rermark 3.2]{GSERSH18} for details.
\end{itemize}

The problem is that these hypothesis, which stem from numerical observations, cannot be validated: the descent to zero of the observed  discrepancy as $r$ increases is only apparent. Indeed, what happens is that, for every fixed $k$ it holds
%as we are going to show, for every fixed $k$ it holds that 
\begin{equation}\label{eq:c_alpha,k}
\left|\analerr \right| \to c_{\alpha,k}= \frac{\frac{\alpha}{4}}{k^2\pi^2+\frac{\alpha}{4}}>0 \quad \mbox{as }r,n \to \infty,
\end{equation}
with $c_{\alpha,k}$ independent of $n$ and $n'$. This is the content of Proposition \ref{prop:non_good_approximation} and Remark \ref{rem:not_good_approximation}, with $p(x)=\alpha x^2,w(x)\equiv 1, q(x)\equiv 0$.

We then have a lower bound for the analytic spectral relative error which can not be avoided by refining the grid points. Of course, as $n\to \infty$, then $c_{\alpha,k} \to 0$ as $k$ increases. Those remarks are summarized in Table \ref{table:table_saturation}.

%	\begin{figure}[ht]
%		\centering
%	\includegraphics[width=12cm]{analytical_error_saturation.eps}
%	\captionof{figure}{Comparison between the analytic relative errors $\analerr$ for differents $r=8\cdot 10^2,10^3,2\cdot 10^3$ with $n=10^2$ and with fixed $\alpha=4\pi^2$. Observe that for $k=1$ and $k=2$ then $c_{4\pi^2,1}=0.5$ and $c_{4\pi^2,2}=0.2$, respectively, with $c_{\alpha,k}$ given in \eqref{eq:c_alpha,k}. As $r$ increases we can see that $\analerr$ tends to $c_{\alpha,k}$ for $k=1,2$.}\label{fig:analytical_error_saturation}
%\end{figure}

\begin{table}
	\centering
	\begin{tabular}{|c|l|c|c|c|} 
		\cline{3-5}
		\multicolumn{2}{l|}{}                                                                              & \multicolumn{3}{c|}{$|\tilde{\textbf{err}}^{(n)}_k/c_{\alpha,k} -1|$ }        \\ 
		\cline{3-5}
		\multicolumn{2}{c|}{}   & \multicolumn{1}{l|}{$n=10^2$ } & $n=10^3$             & $n=10^4$              \\ 
		\hline
		\multirow{3}{*}{\rotcell{$\alpha=0.1$ }}                                 & $c_{\alpha,1}=0.0025$                                                                                         & 0.0326                & 3.3223e-04  & 3.3283e-06   \\ 
		\cline{2-5}
		& $c_{\alpha,5}=1.0131e-04$                                                                                     & 20.3811               & 0.2076      & 0.0021       \\ 
		\cline{2-5}
		& $c_{\alpha,10}=2.5330e-05$                                                                                    & 325.3811              & 3.3222      & 0.0333       \\ 
		\hline\hline
		\multirow{3}{*}{\rotcell{$\alpha=1$ }}                                   & $c_{\alpha,1}=0.0247$                                                                                         & 0.0041               & 4.1363e-05  & 4.1438e-07   \\ 
		\cline{2-5}
		& $c_{\alpha,5}=0.0010$                                                                                         & 2.5395                & 0.0259      & 2.5899e-04   \\ 
		\cline{2-5}
		& $c_{\alpha,10}=2.5324e-04$                                                                                    & 40.6422               & 0.4136      & 0.0041       \\ 
		\hline\hline
		\multirow{3}{*}{\rotcell{$\alpha=2$}} & $c_{\alpha,1}=0.0482$                                                                                         & 0.0026                & 2.6120e-05  & 2.6167e-07   \\ 
		\cline{2-5}
		& $c_{\alpha,5}=0.0020$                                                                                         & 1.6056                & 0.0163      & 1.6354e-04   \\ 
		\cline{2-5}
		& $c_{\alpha,10}=5.0635e-04$                                                                                    & 25.7979               & 0.2612      & 0.0026       \\ 
		\hline\hline
		\multirow{3}{*}{\rotcell{$\alpha=5$}} & $c_{\alpha,1}=0.1124$                                                                                         & 0.0020              & 2.0008e-05  & 2.0044e-07   \\ 
		\cline{2-5}
		& $c_{\alpha,5}=0.0050$                                                                                         & 1.2389                &0.0125      & 1.2528e-04   \\ 
		\cline{2-5}
		& $c_{\alpha,10}=0.0013$                                                                                        & 20.4017               &0.2002      & 1.2528e-04   \\
		\hline
	\end{tabular}
\captionof{table}{For every fixed $k$ and $\alpha$, the analytic relative error $\analerrExact$ converges to the lower bound $c_{\alpha,k}$ as $n$ increases, where $c_{\alpha,k}$ is given in \eqref{eq:c_alpha,k}. Observe that $\analerrExact$ seems to be monotone decreasing of order $O(n^{-2})$. The approximation of $\prescript{}{\alpha}{\omega^*}$ is obtained evaluating $\prescript{}{\alpha}{\phi^{-1}}$ from \eqref{eq:phi_alpha} by means of the \texttt{fzero()} function from \textsc{Matlab} r2018b.}\label{table:table_saturation}
\end{table}

The problem lies on the wrong informal interpretation given to the limit relation in Definition \ref{def:ss_def}, and suggested by Remark \ref{rem:ssymbol_sampling}. Indeed, the asymptotic equality \eqref{def_asym-bis-Matrix} tells us that 
\begin{equation}\label{convergence_to_ICDF}
\left(\frac{k(n)}{n}, \lambda_{k(n)}\left(\Eulerw\right) \right)  \to \left(x, \prescript{}{\alpha}{\omega^*}(x) \right) \qquad \mbox{as }n\to \infty
\end{equation}
%or equivalently
%\begin{equation}\label{convergence_to_ICDF2}
%\left(\frac{k(n)}{n}, \lambda_{k(n)}\left(\Euler\right) \right)  \to \left(x, (n+1)^2\prescript{}{\alpha}{\omega^*}(x) \right) \qquad \mbox{as %}n\to \infty,
%\end{equation}
for every $k(n)$ such that $k(n)/n\to x \in [0,1]$, see Theorem \ref{thm:discrete_Weyl_law}. Therefore, since $\prescript{}{\alpha}{\omega^*}\in C([0,1])$ and $R_\omega$ is bounded, it follows that 
$$
\left\|\prescript{}{\alpha}{\omega^{*,(n)}_{r,k}}-\lambda_k\left(\Eulerw\right)\right\|_\infty\to 0 \quad \mbox{as } n\to \infty,
$$
by Corollary \ref{cor:discrete_Weyl_law} and as observed for example in \cite[Example 10.2 p. 198]{GS17}. On the contrary, a uniform sampling of the symbol $\omega$ does not necessarily provide an accurate approximation of the eigenvalues of the operator $\Eulerw$, in the sense of the relative error. The uniform sampling of the symbol works perfectly only for specific subclass of discretization schemes and operators, but it fails in general.

As a last remark, there does not exist an \textquotedblleft almost\textquotedblright uniform grid as well, nor in an asymptotic sense as described in \cite[Rermark 3.2]{GSERSH18}. %This can be easily seen from Equation \eqref{eq:no_uniform_grid}. If we consider a diffeomorphism $\tau :[0,1] \to [0,1]$ such that $\tau(0)=0$, $\tau(1)=1$, then for every fixed $k$ and $n$ large enough we have that
%$$
%\left|\frac{(n+1)^2\prescript{}{\alpha}{\omega^*}\left(\tau\left(\frac{k}{n+1}\right)\right)}{k^2\pi^2+\frac{\alpha}{4}} -1\right| \approx \left|\frac{\left[(n+1)\tau\left(\frac{k}{n+1}\right)\pi\right]^2}{k^2\pi^2+\frac{\alpha}{4}} -1\right|,
%$$
%and the right-hand side is zero if and only if
%$$
%\tau\left(\frac{k}{n+1}\right) = \left(\frac{k}{n+1}\right)\sqrt{1+ \frac{\alpha}{4k^2\pi^2}}.
%$$
%Of course, $\sqrt{1+ \frac{\alpha}{4k^2\pi^2}}\to 1$ as $k\to \infty$, but it is clear that $\left(\frac{k}{n+1}\right)\sqrt{1+ \frac{\alpha}{4k^2\pi^2}}> \left(\frac{k}{n+1}\right)$ for every fixed $k$. 
Knowing the exact sampling grid which guarantees $\prescript{}{\alpha}{\omega^*}$ to spectrally approximate the discrete differential operator is equivalent to know the eigenvalue distribution of the original differential operator.

What we can do instead is to apply Theorem \ref{thm:MSRE}: by Item \eqref{item_spectral_conv_thm:FD_symbol} of Theorem \ref{thm:FD_symbol},  by the continuity of $\phi_{\prescript{}{\alpha}{\omega}}$ and by Equation \eqref{eq:3}, then items \eqref{MSRE:item1}-\eqref{MSRE:item5} of Theorem \ref{thm:MSRE} are satisfied. Moreover, since it holds that $\mathcal{L}_{\textnormal{dir},\alpha \tau(x)^2}^{(n)}$ does not have outliers by \cite[Theorem 2.2]{Serra00}, we conclude that
\begin{equation}\label{eq:limit_relative_error_FD_3_point}
\mathcal{E}= \max_{x\in[0,1]} \left| \frac{\prescript{}{\alpha}{\omega^*}(x)}{x^2\pi^2} -1\right|>0.
\end{equation}
In Figure \ref{fig:eig_distribution_VS_exact} and Table \ref{table:maximum_rel_error} it is numerically checked the validity of \eqref{eq:limit_relative_error_FD_3_point}.

\begin{table}
	\centering
	\begin{tabular}{|l|c|c|c|c|} 
		\cline{3-5}
		\multicolumn{1}{c}{}           &                                                                                                        & $n=10^2$              & $n=10^3$              & $n=5\cdot10^3$         \\ 
		\hline
		\multirow{2}{*}{$\alpha=0.5$ } & $|\frac{\max|\lambda^{(n)}_k/\lambda_k|}{\max| \prescript{}{\alpha}{\omega^*}(x)/x^2\pi^2|}-1|$  & 0.0104                & 0.0010                & 2.0853e-04             \\ 
		\cline{2-5}
		& $\bar{k}/n$                                                                                            & 0.7900                & 0.7880                & 0.7878                 \\ 
		\hline\hline
		\multirow{2}{*}{$\alpha=1$ }   & $|\frac{\max|\lambda^{(n)}_k/\lambda_k|}{\max| \prescript{}{\alpha}{\omega^*}(x)/x^2\pi^2|}-1|$  & 0.0158                & 0.0016                & 3.1754e-04             \\ 
		\cline{2-5}
		& $\bar{k}/n$                                                                                            & 0.6700                & 0.6680                & 0.6676                 \\ 
		\hline\hline
		\multirow{2}{*}{$\alpha=1.2$ } & $|\frac{\max|\lambda^{(n)}_k/\lambda_k|}{\max| \prescript{}{\alpha}{\omega^*}(x)/x^2\pi^2|}-1|$  & 0.0180                & 0.0018                & 3.6226e-04             \\ 
		\cline{2-5}
		& $\bar{k}/n$                                                                                            & 0.64                  & 0.6310                & 0.6302                 \\ 
		\hline\hline
		\multirow{2}{*}{$\alpha=3$ }   & $|\frac{\max|\lambda^{(n)}_k/\lambda_k|}{\max| \prescript{}{\alpha}{\omega^*}(x)/x^2\pi^2|}-1|$  & 0.0518                & 0.0097               &0.0032             \\ 
		\cline{2-5}
		& $\bar{k}/n$                                                                                            & 1               & 1              & 1                 \\ 
		\hline
	\end{tabular}\caption{In this table we check numerically the validity of Theorem \ref{thm:necessary_cond_for_uniformity} for different values of $\alpha$ and $n$. It can be seen that for every $\alpha$, as $n$ increases then the relative error between $\max_{k=1,\ldots,n}\left|\lambda_k\left(\Euler\right)/\lambda_k\left(\Eulerc\right)\right|$ and $\max_{x\in[0,1]}\left|\prescript{}{\alpha}{\omega^*}(x)/x^2\pi^2\right|$ decreases, validating \eqref{eq:limit_relative_error_FD_3_point}. In the table it is reported as well the ratio $\bar{k}/n$, where $\bar{k}$ is the $k$-th eigenvalue which achieves the maximum relative error between $\lambda_k\left(\Euler\right)$ and $\lambda_k\left(\Eulerc\right)$. We can notice that $\bar{k}/n$ tends to a fixed value in $(0,1]$ as $n$ increases. The approximation of $\prescript{}{\alpha}{\omega^*}$ is obtained evaluating $\prescript{}{\alpha}{\phi^{-1}}$ from \eqref{eq:phi_alpha} by means of the \texttt{fzero()} function from \textsc{Matlab} r2018b.}\label{table:maximum_rel_error}
\end{table}

\begin{figure}
	\centering
	\includegraphics[width=12cm]{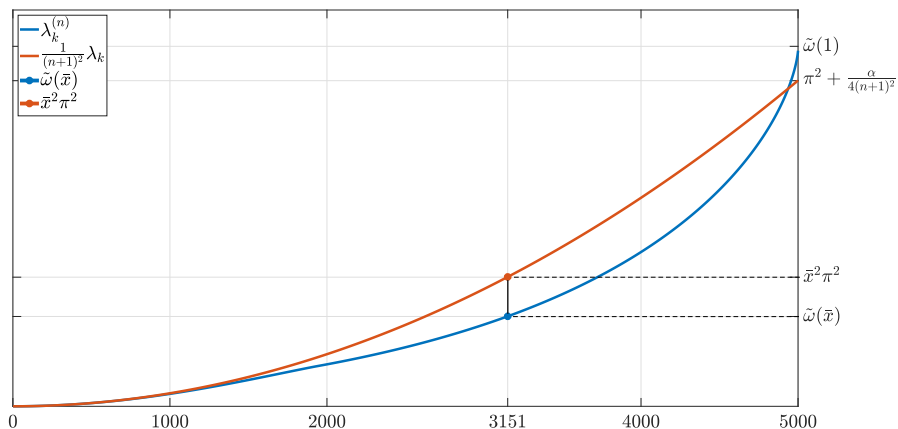}
	\captionof{figure}{For $\alpha=1.2$ and $n=5\cdot10^3$, comparison between the eigenvalues distribution of the weighted discrete differential operator $\Eulerw$ and the exact eigenvalues of the differential operator $\Eulerc$, weighted by $(n+1)^2$. The maximum spectral relative error $\mathcal{E}$ is obtained for $\bar{k}\approx 3151$, which corresponds to the maximum of $|\prescript{}{\alpha}{\omega^*}(x)/x^2\pi^2 -1|$, achieved at $\bar{x}\approx 0.6301 \approx \bar{k}/n$. See Table \ref{table:maximum_rel_error}.}\label{fig:eig_distribution_VS_exact}
\end{figure}

\subsubsection{Discretization by $(2\eta+1)$-points central FD method on non-uniform grid}\label{ssec:FD_nonuniform}
Clearly, everything said in the preceding Subsection \ref{ssec:example_uniform_3_points} remains valid even if we increase the order of accuracy of the FD method, namely, the spectral symbol $\prescript{}{\alpha}{\omega_\eta}$ of equation \eqref{eq:FD_symbol} does not spectrally approximate the discrete differential operator $\mathcal{L}_{\textnormal{dir},\alpha x^2}^{(n,\eta)}$, in the sense of the relative error, for any $\eta\geq 1$. %See Figure \ref{fig:analytic_num_comparison_eta} in relation with Figure \ref{fig:analytical_err_1}.

%\begin{center}
%	\begin{figure}[ht]
%		\centering
%		\subfloat[Uniform central FD with $\eta=4$]{
%			\includegraphics[height=33mm]{analytic_num_comparison_eta_4.eps}  
%			\label{subfig:eta_4}
%		}
%		\subfloat[Uniform central FD with $\eta=8$]{
%			\centering
%			\includegraphics[height=33mm]{analytic_num_comparison_eta_8.eps}  
%			\label{subfig:eta_8}
%		}
%		\captionof{figure}{Graphic comparison between the numerical relative error $\textbf{err}_k^{(n)}$ and the analytic relative error $\tilde{\textbf{err}}_{k,r}^{(n)}$ as in Definition \ref{def:num_anal_error}, for increasing $r=10^2,5\cdot 10^2,8\cdot10^2$. The values of $n$ and $n'$ are fixed at $10^2$ and $10^3$, respectively, and $\alpha=1$. In Figure \ref{subfig:eta_4} it has been used a $5$-points central FD discretization on uniform grid, while in Figure \ref{subfig:eta_8} it has been used a $9$-points central FD discretization on uniform grid. As it happened in Figure \ref{fig:analytical_err_1}, it is displayed an evident discrepancy between the numerical relative error and the analytical relative errors for the first eigenvalues, which is explained by \eqref{eq:c_alpha,k} and Proposition \ref{prop:non_good_approximation}.}\label{fig:analytic_num_comparison_eta}
%	\end{figure}
%\end{center}

What is interesting instead is to change the sampling grid and to increase the order of accuracy $\eta$ of the FD discretization method, see Appendix \ref{ssec:FD}. Indeed, as it was observed in \eqref{eq:limit_relative_error_FD_3_point}, it is not possible to achieve $\mathcal{E}=0$ if $\prescript{}{\alpha}{\omega^*_\eta}(x) \neq x^2\pi^2$. From \eqref{eq:FD_symbol}, for every $\eta\geq 1$ it is easy to check that $\max_{(x,\theta)\in[a,b]\times [0,\pi]}\prescript{}{\alpha}{\omega_{\eta}}(x,\theta)= \prescript{}{\alpha}{\omega^*_\eta}(1)\neq \pi^2$, and so we do not have any improvement by just increasing the order of accuracy $\eta$. On the other hand, observe that if we fix a new sampling grid $\left\{\bar{x}_j\right\}_{j=1}^n = \left\{\tau(x_j)\right\}_{j=1}^n$, with $\tau : [1,\textrm{e}^{\sqrt{\alpha}}]\to [1,\textrm{e}^{\sqrt{\alpha}}]$ a diffeomorphism, then
\begin{itemize}
	\item from Corollary \ref{cor:FD_uniform} and equation \eqref{eq:FD_symbol}, 
	$$
	\lim_{\eta\to \infty}\prescript{}{\alpha}{\omega_\eta}(x,\theta) = \frac{\alpha \tau(x)^2}{\left(\tau'(x)\right)^2\left(\textrm{e}^{\sqrt{\alpha}}-1\right)^2} \theta^2 \qquad \mbox{for every } (x,\theta) \in [1,\textrm{e}^{\sqrt{\alpha}}]\times [0,\pi];
	$$
	\item defining $\tau_2(y) = \textrm{e}^{\sqrt{\alpha}y}$, then $\frac{\alpha \tau_2(y)^2}{\left(\tau_2'(y)\right)^2}\equiv 1$.
\end{itemize}

In some sense, the spectral symbol $\prescript{}{\alpha}{\omega_\eta}$ suggests us to change the uniform grid $\left\{x_j\right\}_{j=1}^n \subset [1,\textrm{e}^{\sqrt{\alpha}}]$
%$$
%\left\{x_j\right\}_{j=1}^n=\left\{1+ %\left(\textrm{e}^{\sqrt{\alpha}}-1\right)\frac{j}{n+1}\right\}_{j=1}^n \subset %[1,\textrm{e}^{\sqrt{\alpha}}]
%$$
by means of the diffeomorphism induced by the Liouville transformation. Indeed, from \eqref{eq:liouville_transform} we have that
\begin{equation*}
y(x)= \frac{\log(x)}{\sqrt{\alpha}} \quad \mbox{for } x \in [1,\textrm{e}^{\sqrt{\alpha}}], \qquad x(y)= \textrm{e}^{\sqrt{\alpha}y} \quad \mbox{for } y \in [0,1],
\end{equation*}  
and therefore we can compose a $C^\infty$-diffeomorphism $\tau: [1,\textrm{e}^{\sqrt{\alpha}}]\to [1,\textrm{e}^{\sqrt{\alpha}}]$ such that
\begin{equation}\label{eq:L-transform1}
\tau :  [1,\textrm{e}^{\sqrt{\alpha}}] \xrightarrow[]{\tau_1} [0,1] \xrightarrow[]{\tau_2}  [1,\textrm{e}^{\sqrt{\alpha}}], \qquad\tau_1(x) = \frac{1}{\textrm{e}^{\sqrt{\alpha}}-1}\left(x-1\right), \qquad \tau_2(y)= \textrm{e}^{\sqrt{\alpha}y}.
\end{equation}
The new non-uniform grid is then given by
\begin{equation}\label{eq:non_uniform_grid}
\left\{\bar{x}_j\right\}_{j=1}^n = \left\{\tau(x_j)\right\}_{j=1}^n,
\end{equation}
and it holds that
%\begin{equation}\label{eq:nec_condition}
%\lim_{\eta\to \infty}\prescript{}{\alpha}{\omega_{\eta}}(\tau(x),\theta) = \lim_{\eta\to \infty} f_\eta(\theta)=\theta^2 \qquad \theta \in %[0,\pi],
%\end{equation}
%and therefore
$$
\lim_{\eta\to \infty}\prescript{}{\alpha}{\omega_{\eta}^*}(x) =\pi^2 x^2 \qquad x \in [0,1].
$$
Moreover, with reference to \eqref{eq:continuous_weyl_function} and Theorem \ref{thm:MSRE}, it is easy to prove that
\begin{equation}\label{eq:3}
\zeta_n(t)= \frac{\left| \left\{ k=1,\ldots,n \, : \, \frac{\lambda_k\left(\Eulerc\right)}{n^2} \leq t \right\} \right|}{n}\to \zeta(t)=\frac{\sqrt{t}}{\pi}, \qquad \mbox{that is } \zeta^*(x)= \pi^2x^2.
\end{equation}
Since the discretization scheme is convergent, for every fixed $k$ the local spectral relative error $\delta_k^{(n)}$ (see Definition \ref{def:max_spectral_err}) converges to zero as $n \to \infty$, and by \cite[Theorem 2.2]{Serra00} it holds that $\lambda_k \left(\mathcal{L}_{\textnormal{dir},\alpha \tau(x)^2}^{(n,\eta)}\right) \in R_{\prescript{}{\alpha}{\omega_\eta}}$ for every $k,\ldots, n$, and for every fixed $\eta\geq 1$. From these remarks, we can apply Theorem \ref{thm:MSRE} and it follows that
$$
\mathcal{E}=\mathcal{E}(\eta) =  \left| \frac{\omega_{\eta}^*(x)}{\pi^2 x^2} -1 \right| \to 0 \qquad \mbox{as }\eta \to \infty.
$$

See Figure \ref{fig:comparison_non_uniform_FD} and Table \ref{tab:uniform_approx}. 

%Finally, in Table \ref{table:maximum_rel_error_FD_eta} we check numerically the validity of Theorem \ref{thm:necessary_cond_for_uniformity}.

\begin{figure}
	\centering
	\subfloat[Uniform central FD with $\eta=1$]{
		\includegraphics[width=8cm]{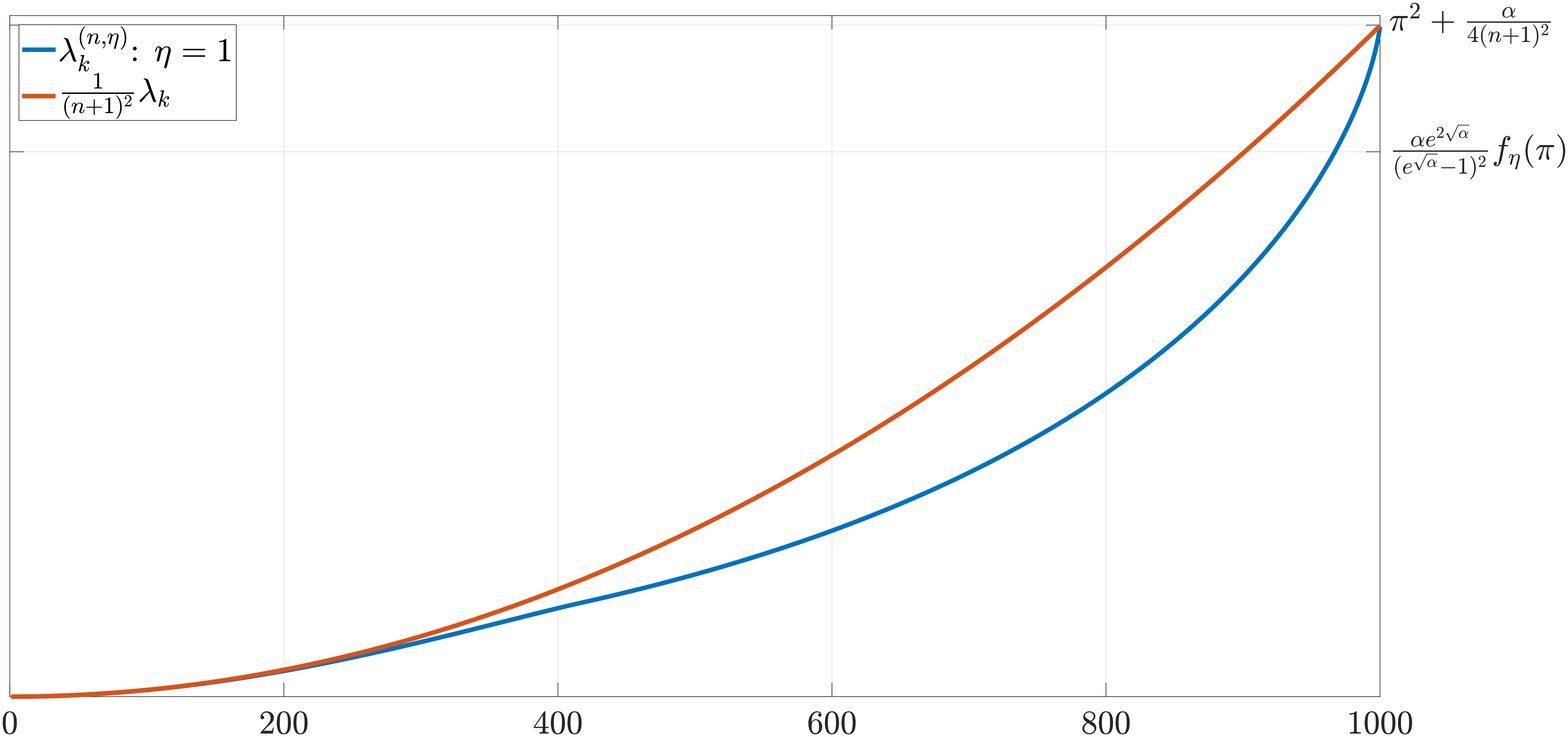}  
		\label{subfig:unif_FD_eta_1}
	}
	\subfloat[Uniform central FD with $\eta=15$]{
		\centering
		\includegraphics[width=8cm]{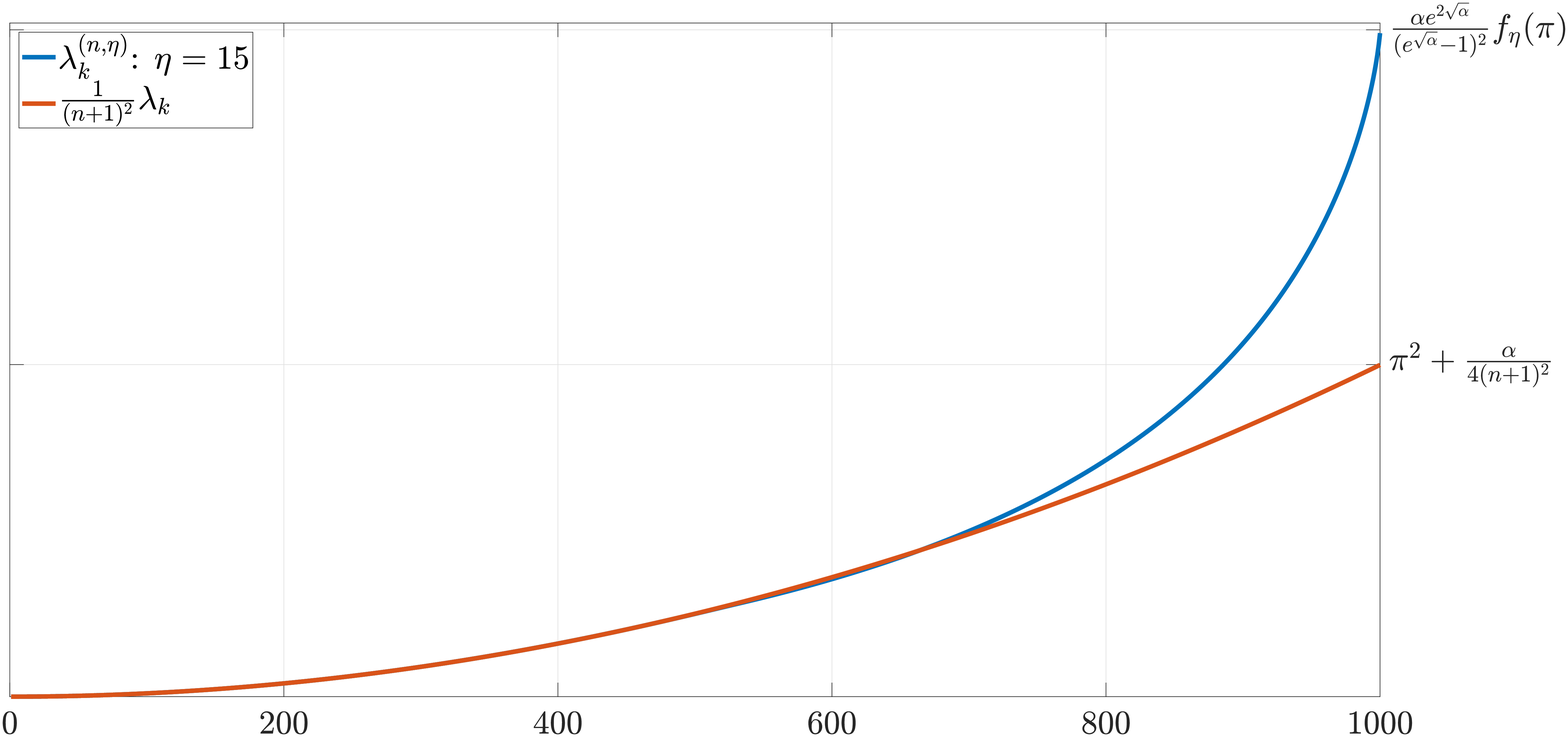}  
		\label{subfig:unif_FD_eta_15}
	}

\centering
\subfloat[Non-uniform central FD with $\eta=1$]{
	\includegraphics[width=8cm]{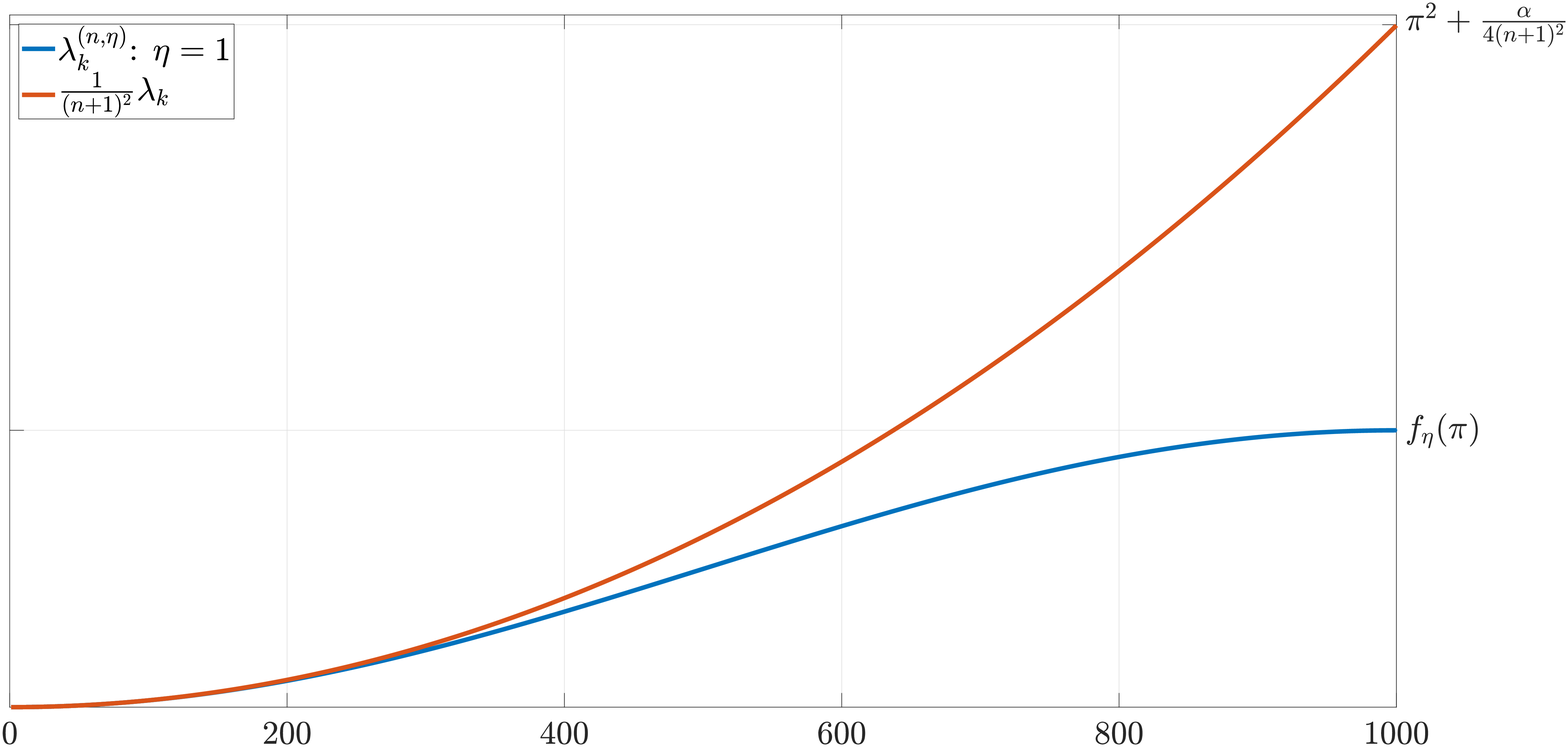}  
	\label{subfig:non_unif_FD_eta_1}
}
\subfloat[Non-uniform central FD with $\eta=15$]{
	\centering
	\includegraphics[width=8cm]{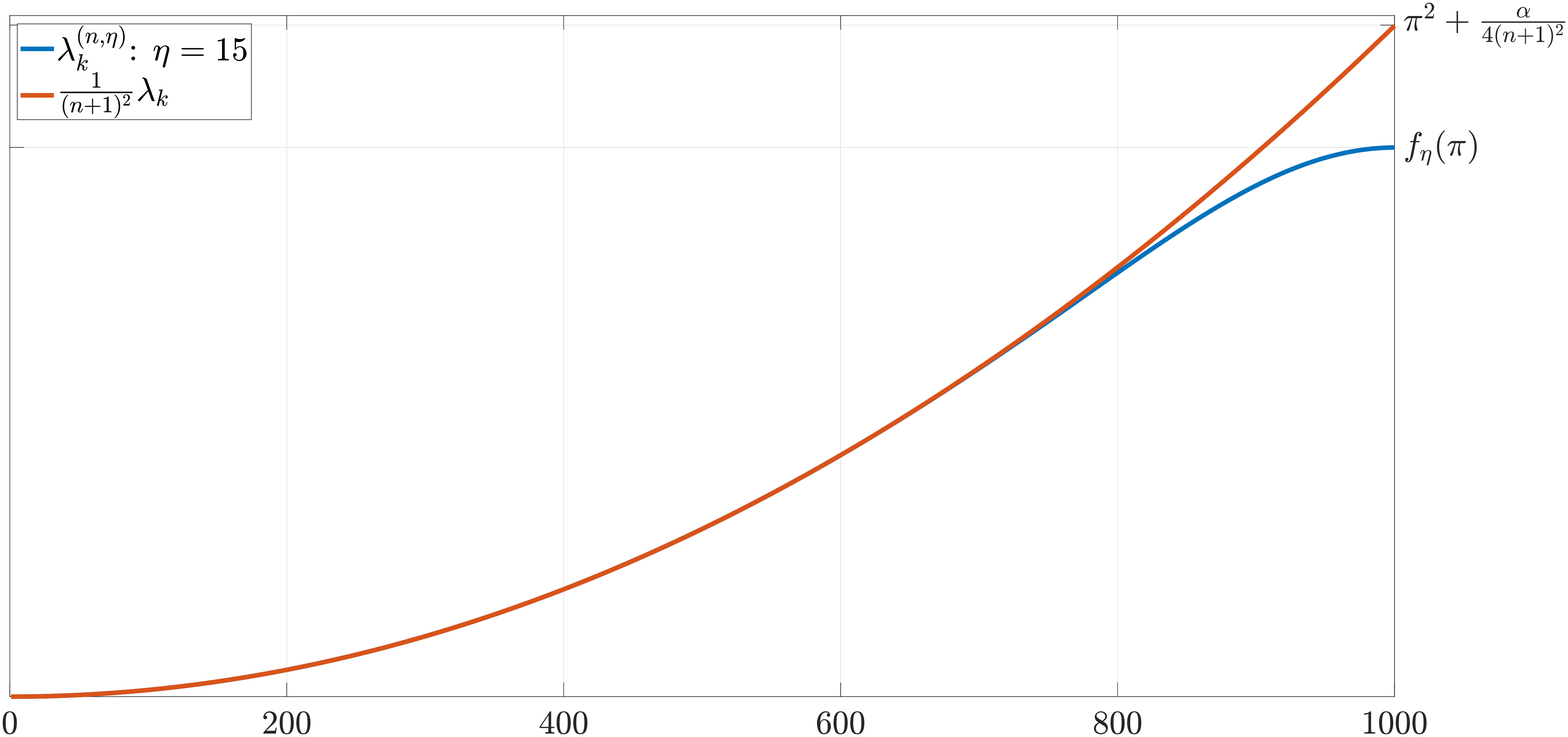}  
	\label{subfig:non_unif_FD_eta_15}
}
\captionof{figure}{Graphic comparison between the eigenvalues distribution of the weighted discrete differential operators $\hat{\mathcal{L}}_{\textnormal{dir},\alpha \tau(x)^2}^{(n,\eta)}$ obtained by means of $(2\eta+1)$-points central FD discretization on uniform ($\tau(x)=x$) and non-uniform grids ($\tau$ as in \eqref{eq:L-transform1}). The parameters $\alpha$ and $n$ are fixed, with $\alpha=1$ and $n=10^3$, while $\eta$ changes. Let us observe that in figures \ref{subfig:non_unif_FD_eta_1}, \ref{subfig:non_unif_FD_eta_15}, i.e., in the case of central FD discretization on the non-uniform grid given by \eqref{eq:non_uniform_grid}, the graph of the eigenvalue distribution seems to converge uniformly to the graph of the exact eigenvalues $(n+1)^{-2}\lambda_k$, as $\eta$ increases. The same phenomenon does not happen in the case of central FD discretization on uniform grid, as it is clear from figures \ref{subfig:unif_FD_eta_1},\ref{subfig:unif_FD_eta_15}. See Table \ref{tab:uniform_approx} for a numerical comparison of the maximum spectral relative errors.}\label{fig:comparison_non_uniform_FD}
\end{figure}
%\begin{figure}[ht]
%	\centering
%	\subfloat[Relative errors for uniform central FD]{
%		\includegraphics[width=8cm]{FD_uniformGrid_error.eps}  
%		\label{subfig:FD_uniformGrid_error}
%	}
%	\subfloat[Relative errors for non-uniform central FD]{
%		\centering
%		\includegraphics[width=8cm]{FD_NonuniformGrid_error.eps}  
%		\label{subfig:FD_NonuniformGrid_error}
%	}
%	\captionof{figure}{Graphic comparison between the eigenvalues relative errors of the discrete differential operators $\mathcal{L}_{\textnormal{dir},\alpha \tau(x)^2}^{(n,\eta)}$ obtained by means of $(2\eta+1)$-points central FD discretization on uniform ($\tau(x)=x$) and non-uniform grids ($\tau$ as in \eqref{eq:L-transform1}-\eqref{eq:L-transform2}), for different values of $\eta$. The parameters $\alpha$ and $n$ are fixed, with $\alpha=1$ and $n=10^3$. In Subfigure \ref{subfig:FD_uniformGrid_error}, where it is used the standard uniform grid, we notice that increasing the order $\eta$ produces a better approximation for the first half eigenvalues but it worsen the approximation in the last half part. On the contrary, from Subfigure \ref{subfig:FD_NonuniformGrid_error} where it is used the non-uniform grid given by \eqref{eq:non_uniform_grid}, increasing the order $\eta$ produces a well-behaved uniform relative approximation.}\label{fig:comparison_FD_err}
%\end{figure}

\begin{table}
	\centering
	\begin{tabular}{l|c|c|c|} 
		\cline{2-4}
		& \multicolumn{3}{c|}{$\mathcal{E}=\mathcal{E}(\eta)$ }  \\ 
		\cline{2-4}
		& \multicolumn{1}{c||}{$\eta=1$ } & \multicolumn{1}{c||}{$\eta=10$ } & $\eta=15$          \\ 
		\cline{2-4}
		& $n=10^2$                        & $n=10^3$                         & $n=15\cdot10^2$           \\ 
		\hline
		\multicolumn{1}{|l|}{uniform grid}     & 0.3155                          & 0.9057                          & 1.0101             \\ 
		\hline
		\multicolumn{1}{|l|}{non-uniform grid} & 0.5867
		                          & 0.2210                           & 0.1819             \\
		\hline
	\end{tabular}
	\captionof{table}{Comparison between the maximum spectral relative errors $\mathcal{E}$ of the discrete differential operators $\mathcal{L}_{\textnormal{dir},\alpha \tau(x)^2}^{(n,\eta)}$ obtained by means of $(2\eta+1)$-points central FD discretization on uniform ($\tau(x)=x$) and non-uniform grids ($\tau$ as in \eqref{eq:L-transform1}), for increasing values of $n$ and $\eta$. The parameter $\alpha$ is fixed, with $\alpha=1$. We observe that in the uniform grid case, as $n$ and $\eta$ increase the maximum increases as well. On the contrary, in the non-uniform grid case given by \eqref{eq:non_uniform_grid}, the maximum decreases as both $n$ and $\eta$ increase.}\label{tab:uniform_approx}
\end{table}

\subsubsection{IgA discretization by B-spline of degree $\eta$ and smoothness $C^{\eta-1}$}\label{ssec:galerkin}
In this subsection we continue our analysis in the IgA framework. We just collect all the numerical results of the tests, which confirm again what observed in subsections \ref{ssec:example_uniform_3_points} and \ref{ssec:FD_nonuniform}. The only difference relies on the fact that we took out the largest eigenvalues of the discrete operator $\mathcal{L}_{\textnormal{dir},\alpha \tau(x)^2}^{(n,\eta)} $. This is due to the fact that the IgA discretization suffers of a fixed number of outliers which depends on the degree $\eta$ and it is independent of $n$, see \cite[Chapter 5.1.2 p. 153]{CHB}. So, we consider only the eigenvalues $\lambda_k\left(\mathcal{L}_{\textnormal{dir},\alpha \tau(x)^2}^{(n,\eta)}\right)$ which belong to $R_{\prescript{}{\alpha}{\omega_{\eta}}}$.
%$$
%\lambda_k\left(\prescript{\textrm{e}^{\sqrt{\alpha}}}{1}{\mathcal{L}_{\textnormal{dir},\alpha \tau(x)^2}^{(n+\eta-1,\eta)}}\right) \in R_{\prescript{}{\alpha}{\omega_{\eta}}}.
%$$
We stress out the fact that the number of outliers is fixed for every $n$, in accordance with Corollary \ref{cor:weak_clustering}.

%In Figure \ref{fig:analytic_num_comparison_IgA_eta} we observe again the discrepancy between the analytic relative errors and the numerical relative error. 
%\begin{center}
%	\begin{figure}[th]
%		\centering
%		\subfloat[Uniform IgA with $\eta=1$]{
%			\includegraphics[height=33mm]{analytic_num_comparison_IgA_eta_1.eps}  
%			\label{subfig:anal_IgA_eta_1}
%		}
%		\subfloat[Uniform IgA with $\eta=4$]{
%			\centering
%			\includegraphics[height=33mm]{analytic_num_comparison_IgA_eta_4.eps}  
%			\label{subfig:anal_IgA_eta_4}
%		}
%		\captionof{figure}{Graphic comparison between the numerical relative error $\textbf{err}_k^{(n)}$ and the analytic relative error $\tilde{\textbf{err}}_{k,r}^{(n)}$ as in Definition \ref{def:num_anal_error}, for increasing $r=10^2,5\cdot 10^2,8\cdot10^2$. The values of $n$ and $n'$ are fixed at $10^2$ and $10^3$, respectively, and $\alpha=1$. In Figure \ref{subfig:anal_IgA_eta_1} it has been used an IgA discretization of order $\eta=1$, while in Figure \ref{subfig:anal_IgA_eta_4} it has been used an IgA discretization of order $\eta=4$. Both of them are made on a uniform grid $\tau(x)=x$. As it happened in Figure \ref{fig:analytical_err_1}, it is displayed an evident discrepancy between the numerical relative error and the analytical relative errors for the first eigenvalues, which is explained by Proposition \ref{prop:non_good_approximation}.}\label{fig:analytic_num_comparison_IgA_eta}
%	\end{figure}
%\end{center}

In Figure \ref{fig:comparison_non_uniform_IgA} we compare the graphs of the eigenvalue distributions between the discrete eigenvalues $\lambda_k\left(\mathcal{L}_{\textnormal{dir},\alpha \tau(x)^2}^{(n,\eta)}\right)$ and the exact eigenvalues $\lambda_k\left(\Eulerc\right)$, for different values of $\eta$ on uniform ($\tau(x)=x$) and non-uniform grids ($\tau$ as in \eqref{eq:L-transform1}). They line up with the numerics of Table \ref{table:maximum_rel_error_IgA_eta}: if the sampling grid is given by \eqref{eq:non_uniform_grid}, the maximum spectral relative error $\mathcal{E}$ decreases as the order of approximation increases.

	\begin{table}
		\centering
		\begin{tabular}{l|c|c|c|} 
			\cline{2-4}
			& \multicolumn{3}{c|}{$\mathcal{E}_n$ }  \\ 
			\cline{2-4}
			& \multicolumn{1}{c||}{$\eta=1$ } & \multicolumn{1}{c||}{$\eta=5$ } & $\eta=10$           \\ 
			\cline{2-4}
			& $n=10^2$                        & $n=5\cdot10^2$                        & $n=10^3$            \\ 
			\hline
			\multicolumn{1}{|l|}{uniform grid}     & 1.7653                          & 1.1971                          & 1.2005             \\ 
			\hline
			\multicolumn{1}{|l|}{non-uniform grid} & 0.4433                          & 0.0513                          & 0.0268              \\
			\hline
		\end{tabular}
	\captionof{table}{Comparison between the maximum spectral relative errors of the discrete differential operators $\mathcal{L}_{\textnormal{dir},\alpha \tau(x)^2}^{(n,\eta)}$ obtained by means of IgA discretization of order $\eta$ on uniform ($\tau(x)=x$) and non-uniform grids ($\tau$ as in \eqref{eq:L-transform1}), for increasing values of $\eta$ and $n$. The parameter $\alpha$ is fixed, with $\alpha=1$. We observe that in the non-uniform grid case given by \eqref{eq:non_uniform_grid}, the maximum spectral relative error decreases as $n$ and $\eta$ increase. Let us notice that we did not take in consideration the outliers, see Definition \ref{def:outliers}.}\label{table:maximum_rel_error_IgA_eta}
\end{table}

\begin{figure}
	\centering
	\subfloat[Uniform IgA with $\eta=1$]{
		\includegraphics[width=8cm]{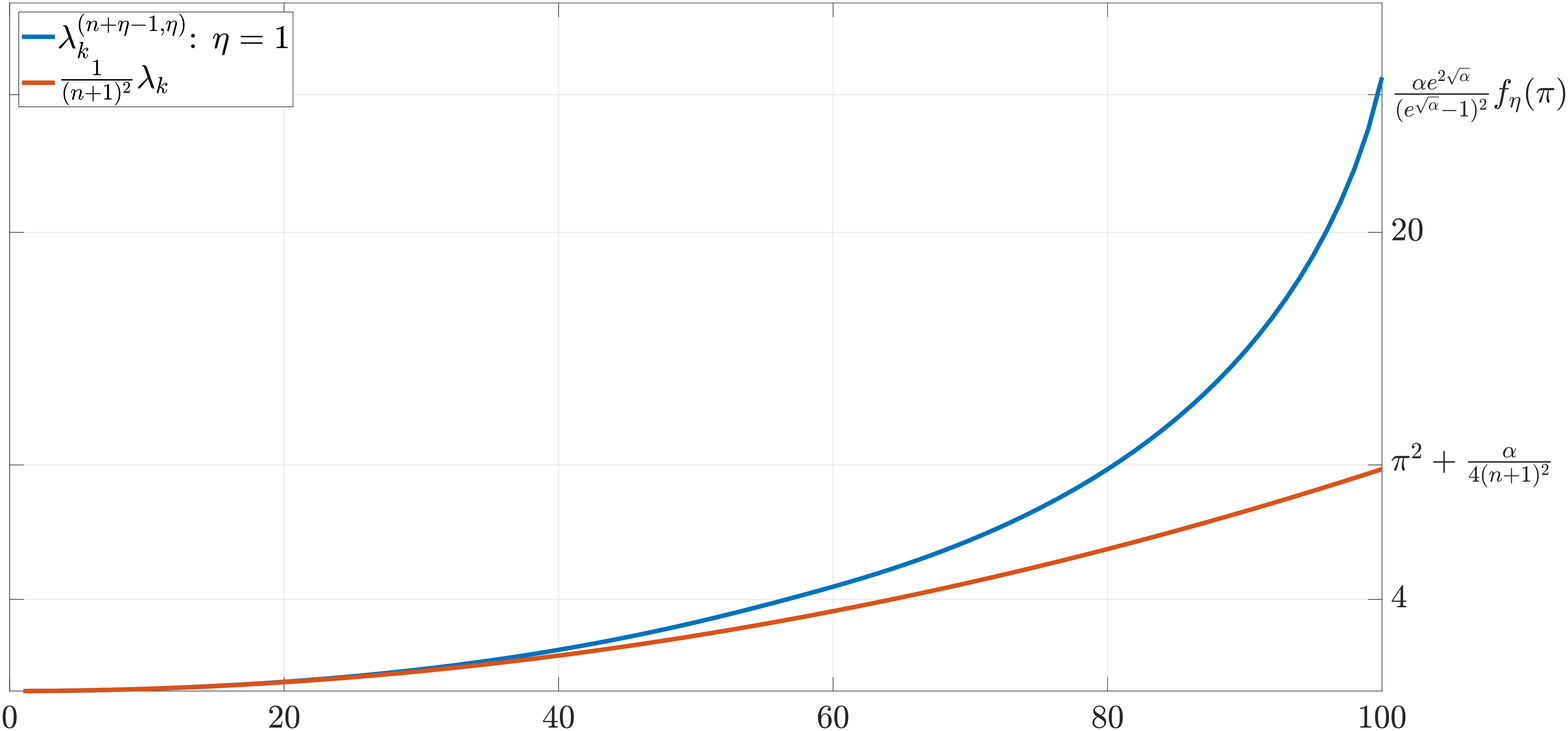}  
		\label{subfig:unif_IgA_eta_1}
	}
	\subfloat[Uniform IgA with $\eta=10$]{
		\centering
		\includegraphics[width=8cm]{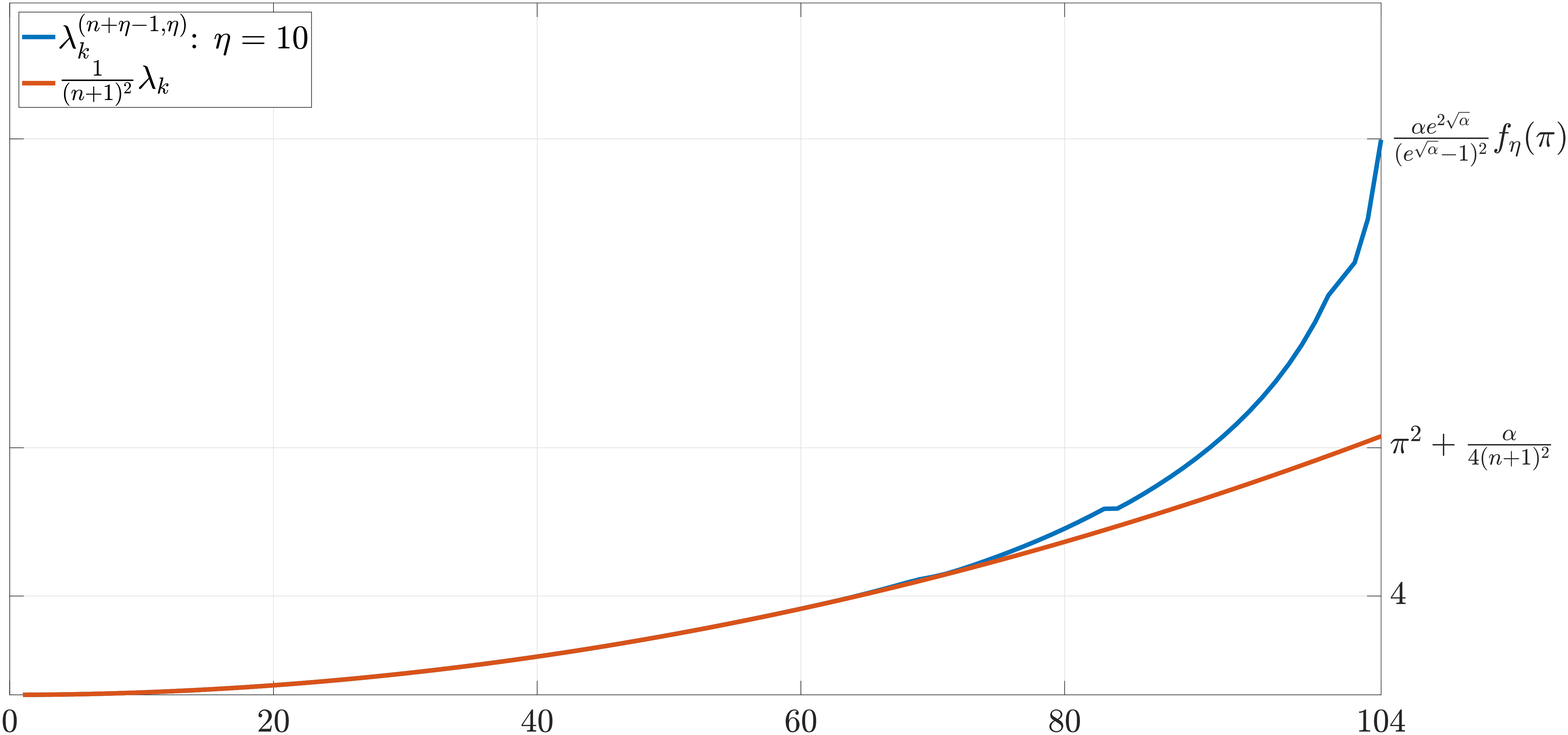}  
		\label{subfig:unif_IgA_eta_10}
	}
	
	\centering
	\subfloat[Non-uniform IgA with $\eta=1$]{
		\includegraphics[width=8cm]{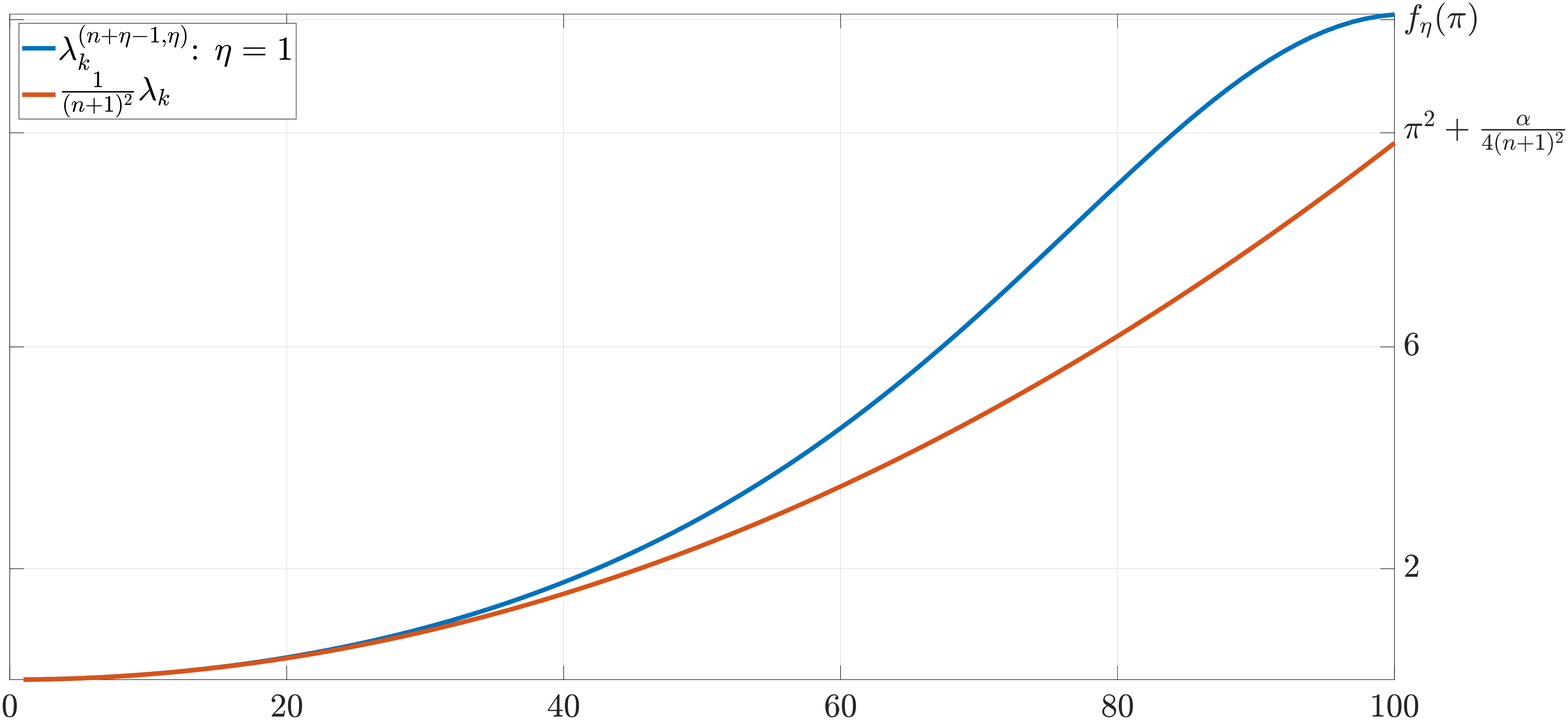}  
		\label{subfig:non_unif_IgA_eta_1}
	}
	\subfloat[Non-uniform IgA with $\eta=10$]{
		\centering
		\includegraphics[width=8cm]{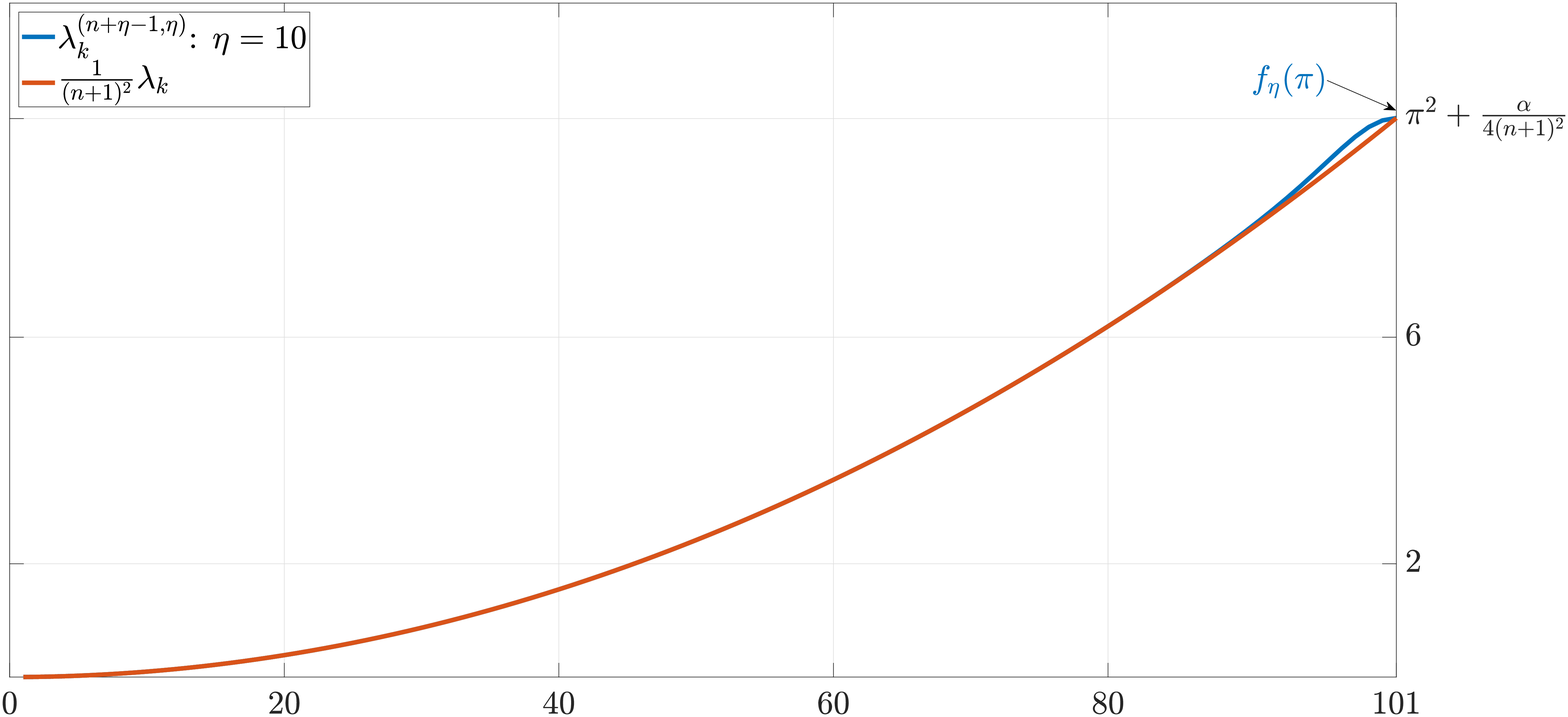}  
		\label{subfig:non_unif_IgA_eta_10}
	}
	\captionof{figure}{Graphic comparison between the eigenvalues distribution of the weighted discrete differential operators $\hat{\mathcal{L}}_{\textnormal{dir},\alpha \tau(x)^2}^{(n,\eta)}$ obtained by means of IgA discretization of order $\eta$ on uniform ($\tau(x)=x$) and non-uniform grids ($\tau$ as in \eqref{eq:L-transform1}). The parameters $\alpha$ and $n$ are fixed, with $\alpha=1$ and $n=10^2$, while $\eta$ changes. Let us observe that in figures \ref{subfig:non_unif_IgA_eta_1}, \ref{subfig:non_unif_IgA_eta_10}, i.e., in the case of IgA discretization on the non-uniform grid given by \eqref{eq:non_uniform_grid}, the graph of the eigenvalue distribution seems to converge uniformly to the graph of the exact eigenvalues $(n+1)^{-2}\lambda_k$, as $\eta$ increases. The same phenomenon does not happen in the case of IgA discretization on uniform grid, as it is clear from figures \ref{subfig:unif_IgA_eta_1},\ref{subfig:unif_IgA_eta_10}. Let us notice that we did not take in consideration the outliers, see Definition \ref{def:outliers}.}\label{fig:comparison_non_uniform_IgA}
\end{figure}

\subsection{$d$-dimensional Dirichlet Laplacian}\label{ssec:generalization}
The results of Section \ref{sec:asymptotic_distribution} can be generalized to study the maximum spectral relative error between more general linear self-adjoint differential operators and the numerical scheme implemented for their discretization. Let us sketch the ideas with a plain example.

Let $\Delta: W_0^{1,2}\left([0,1]^d\right) \to \textnormal{L}^2\left([0,1]^d\right)$ be the second order elliptic operator defined by the formal equation
$$
\Delta[u](\bfxx) := - \sum_{j=1}^d\frac{\partial^2 u}{\partial_{x^2_j}}(\bfxx), \qquad [0,1]^d \subset \R^d.
$$
It is well-known that it has empty essential spectrum and that 
\begin{align*}
&\lambda_k\left(\Delta\right) = \lambda_{k_1\cdots k_d}\left(\Delta\right) =\sum_{j=1}^d \pi^2k_j^2 \quad \mbox{for }k_1,\ldots,k_d\geq 1,\\
&\zeta(t) =\lim_{n \to \infty}\frac{\left|k=1,\ldots,n^d \, : \, \frac{\lambda_k\left(\Delta\right)}{(n+1)^2}\leq t\right|}{n^d} = c_d\left(\frac{t}{4\pi^2}\right)^{\frac{d}{2}}, \quad t \in [0,4\pi^2/(c_d)^{2/d}] ,
\end{align*}
where $c_d$ is the volume of the unit ball in $\R^d$, see \cite{Levendorskii}. Define
$$
Y^{(\bfnn)}:= (n+1)^{-2}\diag_{k=1,\ldots,d_\bfnn}\left\{\lambda_{k}\left(\Delta\right)\right\}, \qquad \bfnn=(\underbrace{n,\ldots,n}_{d-\mbox{times}}).
$$
Then it holds that $\left\{Y^{\bfnn}\right\}_\bfnn \sim_{\lambda} \zeta^*(x) =  4\pi^2\left(\frac{x}{c_d}\right)^{2/d}$, $x\in[0,1]$.

On the other hand, a discretization of the operator $\Delta$ by means of separation of variables and the classic equispaced $2d+1$-points FD scheme, leads to
$$
\Delta^{(\bfnn)}= (n+1)^2\sum_{k=1}^d \underbrace{I^{(n)}\otimes\cdots\otimes I^{(n)}}_{k-1}\otimes T^{(n)}\left(2-2\cos(\theta_k)\right)\otimes \underbrace{I^{(n)}\otimes\cdots\otimes I^{(n)}}_{d-k}, \qquad \theta_k \in [0,\pi],
$$
where $I^{(n)}$ is the identity matrix and $\otimes$ is the Kronecker product, and it holds 
$$
\left\{(n+1)^{-2}\Delta^{(\bfnn)}\right\}_\bfnn=\left\{\hat{\Delta}^{(\bfnn)}\right\}_\bfnn \sim_\lambda \omega\left(\bftheta\right)= \sum_{k=1}^d (2-2\cos(\theta_k)), \qquad \bftheta=(\theta_1,\ldots,\theta_d) \in [0,\pi]^d,
$$ 
see \cite[Chapter 7.3]{GS18}. Since the discretization scheme is convergent, for every fixed $k$ the local spectral relative error $\delta_k^{(\bfnn)}$ (see Definition \ref{def:max_spectral_err}) converges to zero as $\bfnn \to \infty$, and by \cite[Theorem 2.2]{Serra00} it holds that $\lambda_k \left(\Delta^{(\bfnn)}\right) \in R_{\omega}$ for every $k, \bfnn$. Moreover, it is immediate to check that both $\omega^*,\zeta^*$ are continuous, therefore we can apply Theorem \ref{thm:MSRE} to get the maximum spectral relative error: 
\begin{align*}
\mathcal{E}=\sup_{x\in(0,1]} \left| \frac{\omega^*(x)}{\zeta^*(x)}  -1\right|\geq \left| \frac{\omega^*(1)}{\zeta^*(1)}  -1\right|= \left|\frac{4d}{\frac{4\pi^2}{(c_d)^{2/d}}}  -1\right|>0.
\end{align*}

\section{Conclusions}\label{sec:conlcusions}

Given a differential operator $\mathcal{L}$ discretized by means of a numerical scheme, the knowledge of the spectral symbol $\omega$ provides a way to measure how far the discretization method is from a uniform approximation of all the eigenvalues of the original differential operator. Moreover, the symbol itself suggests a reparametrization of the uniform grid which, if coupled to an increasing refinement of the order of approximation of the method, it allows to obtain $\mathcal{E}=0$, see \eqref{max_rel_spectrum}. This is crucial in engineering applications. In light of this, it becomes a priority to devise new specific discretization schemes with
mesh-dependent order of approximation which guarantee a good balance between convergence to zero of the maximum relative spectral error and computational costs.

\appendix

\section{Auxiliary results}\label{sec:theory}
%\subsection{Proofs of results presented in Section \ref{sec:example}}\label{ssec:generalization_sec4}

\begin{proposition}\label{prop:non_good_approximation}
	Let us consider a Sturm-Liouville operator defined by the formal equation
	\begin{align*}
\mathcal{L}_{\textnormal{BCs},p(x),q(x),w(x)}[u](x):= \frac{1}{w(x)}\left[-\partial_x\left(p(x)\partial_xu(x)\right) +q(x)u(x)\right], \qquad x\in (a,b)\subset \R, 
	\end{align*}
	such that 
	\begin{enumerate}[(i)]
		\item $p, p',w,w',q,(pw)',(pw)'' \in C([a,b])$;
		\item $p,w>0$;
		%	\item $q \geq0$;
		\item regular BCs.
	\end{enumerate}
	Discretize the above self-adjoint linear differential operator $\mathcal{L}_{\textnormal{BCs},p(x),q(x),w(x)}$ by means of a numerical scheme, and let $\mathcal{L}_{\textnormal{BCs},p(x),q(x),w(x)}^{(n,\eta)}$ be the correspondent discrete operator, where $n$ is the mesh finesse parameter and $\eta$ is the order of approximation of the numerical scheme. Define $B=\int_a^b \sqrt{\frac{w(x)}{p(x)}}m(dx)$. If:
	\begin{enumerate}[(a)]
		\item for every fixed $k\in \N$, 
		\begin{equation*}
		\lim_{n \to \infty} \lambda_k\left( \mathcal{L}_{\textnormal{BCs},p(x),q(x),w(x)}^{(n,\eta)}\right) = \lambda_k\left(\mathcal{L}_{\textnormal{BCs},p(x),q(x),w(x)}\right);
		\end{equation*}
		\item there exists $\omega : [a,b]\times [-\pi,\pi] \to \R$, $\omega \in \textnormal{L}^1\left([a,b]\times [-\pi,\pi]\right)$ such that
		\begin{equation*}
		\left\{(d_n+1)^{-2}\mathcal{L}_{\textnormal{BCs},p(x),q(x),w(x)}^{(n,\eta)}\right\}_n=\left\{\hat{\mathcal{L}}_{\textnormal{BCs},p(x),q(x),w(x)}^{(n,\eta)}\right\}_n \sim_{\lambda}\omega(x,\theta) \quad (x,\theta) \in [a,b]\times [-\pi,\pi];
		\end{equation*}
		\item\label{item:tilde_omega_asymptotics} for every $\eta$, the monotone rearrangement $\omega^*_\eta$, as defined in \eqref{eq:rearrangment}, is such that
		\begin{equation*}
		\omega^*_\eta(x) \sim \frac{x^2\pi^2}{B^2}, \qquad \mbox{as } x\to 0.
		\end{equation*} 
	\end{enumerate}
	Then, for every fixed $k\in \N$ 
	$$
		\lim_{n \to \infty} \left|\frac{(d_n+1)^2\omega^*_\eta\left(\frac{k}{d_n+1}\right)}{\lambda_k\left(\mathcal{L}_{\textnormal{BCs},p(x),q(x),w(x)}^{(n,\eta)} \right)} -1 \right| =   	\lim_{n \to \infty} \left|\frac{(d_n+1)^2\omega^*_\eta\left(\frac{k}{d_n+1}\right)}{\lambda_k\left(\prescript{B}{0}{\mathcal{L}_{\textnormal{BCs},V}}\right)} -1 \right| =c_k\geq 0, \qquad \lim_{k \to \infty} c_k =0,
	$$
	with $c_k$ independent of $\eta$, and %The inequality is strict, i.e., $c_k >0$ for every $k$ such that
	%$$
%\lambda_k\left(\prescript{b}{a}{\mathcal{L}_{\textnormal{BCs},p(x),q(x),w(x)}} \right)=	\lambda_k\left(\prescript{B}{0}{\mathcal{L}_{\textnormal{BCs},V}}\right) \neq  \lambda_k\left(\prescript{B}{0}{\Delta_{\textnormal{dir}}}\right),
%	$$
	where $\prescript{B}{0}{\mathcal{L}_{\textnormal{BCs},V}}$ is the differential operator associated to the normal form of  $\prescript{b}{a}{\mathcal{L}_{\textnormal{BCs},p(x),q(x),w(x)}}$ by the Liouville transform $y(x):=\int_a^x\sqrt{\frac{w(t)}{p(t)}}m(dt)$. %and $\prescript{B}{0}{\Delta_{\textnormal{dir}}}$ is the (negative) Laplacian operator with Dirichlet BCs over the domain $[0,B]$.
	
	In particular,
	\begin{equation*}
	\lim_{n \to \infty}\analerr = c_k, 
	\end{equation*}
	where $\analerr$ is the analytic spectral error of Definition \ref{def:num_anal_error}.
\end{proposition}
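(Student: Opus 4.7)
The plan is to reduce the Sturm–Liouville problem to its Liouville normal form, where Weyl-type asymptotics for the eigenvalues are available, and then combine these asymptotics with the local behaviour of $\omega^*_\eta$ near $0$ provided by hypothesis~(\ref{item:tilde_omega_asymptotics}).

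First, I would use the Liouville transformation $y(x)=\int_a^x\sqrt{w(t)/p(t)}\,m(dt)$, which maps $[a,b]$ diffeomorphically onto $[0,B]$ and transforms $\prescript{b}{a}{\mathcal{L}_{\textnormal{BCs},p,q,w}}$ into a Schr\"odinger operator $\prescript{B}{0}{\mathcal{L}_{\textnormal{BCs},V}}[v](y)=-v''(y)+V(y)v(y)$ on $[0,B]$ with regular BCs, where $V$ is built from $p,q,w$ and their derivatives (the regularity assumptions (i)–(iii) guarantee $V\in C([0,B])$). Since the Liouville transform is an isospectral unitary equivalence, $\lambda_k(\prescript{b}{a}{\mathcal{L}_{\textnormal{BCs},p,q,w}})=\lambda_k(\prescript{B}{0}{\mathcal{L}_{\textnormal{BCs},V}})$ for every $k$, which already yields the second equality in the claim.

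Next, I would invoke the classical Weyl asymptotics for regular Sturm–Liouville problems in Schr\"odinger form: $\lambda_k(\prescript{B}{0}{\mathcal{L}_{\textnormal{BCs},V}})=\frac{k^2\pi^2}{B^2}+O(1)$ as $k\to\infty$, so that $\lambda_k(\mathcal{L})\cdot\frac{B^2}{k^2\pi^2}\to 1$ as $k\to\infty$. Meanwhile, for any \emph{fixed} $k$, the ratio $k/(d_n+1)\to 0$ as $n\to\infty$, so hypothesis~(\ref{item:tilde_omega_asymptotics}) gives
\begin{equation*}
(d_n+1)^2\,\omega^*_\eta\!\left(\tfrac{k}{d_n+1}\right) = (d_n+1)^2\cdot \tfrac{k^2\pi^2}{(d_n+1)^2 B^2}\bigl(1+o(1)\bigr)\;\longrightarrow\;\tfrac{k^2\pi^2}{B^2}.
\end{equation*}
Crucially, the right-hand side does not depend on $\eta$, because the leading-order behaviour of $\omega^*_\eta$ at $0$ is, by assumption, $x^2\pi^2/B^2$ for every $\eta$.

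Combining this limit with hypothesis~(a), namely $\lambda_k(\mathcal{L}^{(n,\eta)})\to\lambda_k(\mathcal{L})$, we obtain
\begin{equation*}
\lim_{n\to\infty}\left|\frac{(d_n+1)^2\omega^*_\eta\!\left(\frac{k}{d_n+1}\right)}{\lambda_k\!\left(\mathcal{L}^{(n,\eta)}_{\textnormal{BCs},p,q,w}\right)}-1\right| = \left|\frac{k^2\pi^2/B^2}{\lambda_k(\mathcal{L})}-1\right| =: c_k,
\end{equation*}
which is the asserted identity, with $c_k$ manifestly independent of $\eta$. The Weyl asymptotics established above then give $c_k\to 0$ as $k\to\infty$, and the final statement about $\analerr$ is immediate from Definition~\ref{def:num_anal_error}, after noting that $\lambda_k(\mathcal{L}^{(n',\eta)})\to\lambda_k(\mathcal{L})$ by~(a), so passing $n'\to\infty$ in the definition of $\analerr$ reproduces the ratio above. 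The main technical point, and the one requiring most care, is verifying that the hypotheses (i)–(iii) really are enough to push the Liouville transform through and to guarantee the Weyl remainder $O(1)$; the rest is a direct assembly of (a), (c) and the isospectrality.
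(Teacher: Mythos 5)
Your proposal is correct and follows essentially the same route as the paper's own proof: isospectrality via the Liouville transform, hypothesis (c) to evaluate $\lim_{n}(d_n+1)^2\omega^*_\eta\left(\tfrac{k}{d_n+1}\right)=\tfrac{k^2\pi^2}{B^2}$ (hence $\eta$-independence of $c_k$), hypothesis (a) for the denominator, and the classical asymptotics $\lambda_k\left(\prescript{B}{0}{\mathcal{L}_{\textnormal{BCs},V}}\right)\sim \tfrac{k^2\pi^2}{B^2}$ to conclude $c_k\to 0$. Your explicit formula $c_k=\left|\tfrac{k^2\pi^2/B^2}{\lambda_k(\mathcal{L})}-1\right|$ just makes precise what the paper leaves implicit.
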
  
\begin{proof}
	%By hypothesis, for every fixed $k$
	%$$
	%\lim_{n \to \infty}\lambda_k\left(\prescript{b}{a}{\mathcal{L}_{\textnormal{BCs},p(x),q(x),w(x)}^{(n,\eta)}}\right) = %\lambda_k\left(\prescript{b}{a}{\mathcal{L}_{\textnormal{BCs},p(x),q(x),w(x)}}\right), 
	%$$ 
	%where $\lambda_k\left(\prescript{b}{a}{\mathcal{L}_{\textnormal{BCs},p(x),q(x),w(x)}}\right)$ are the eigenvalues of the continuous %differential operator, and 
	%$$
	%\lambda_k\left(\prescript{b}{a}{\mathcal{L}_{\textnormal{BCs},p(x),q(x),w(x)}}\right) = %\lambda_k\left(\prescript{B}{0}{\mathcal{L}_{\textnormal{BCs},V(y)}}\right),
	%$$
	%where $\prescript{B}{0}{\mathcal{L}_{\textnormal{BCs},V(y)}}$ is the differential operator associated to the normal form  \eqref{eq:SLO_n1}-\eqref{eq:SLO_n3} through the Liouville transform. 
	By standard theory it holds that 
	$$
\lambda_k\left(\mathcal{L}_{\textnormal{BCs},p(x),q(x),w(x)} \right)=	\lambda_k\left(\prescript{B}{0}{\mathcal{L}_{\textnormal{BCs},V(y)}}\right) \qquad \mbox{and} \qquad	\lambda_k\left(\prescript{B}{0}{\mathcal{L}_{\textnormal{dir},V\equiv 0}}\right) =  \lambda_k\left(\prescript{B}{0}{\Delta_{\textnormal{dir}}}\right)=\frac{k^2\pi^2}{B^2}.
	$$
	Therefore, from item \eqref{item:tilde_omega_asymptotics}
	\begin{equation}
	\lim_{n \to \infty} (d_n+1)^2\omega^*_\eta\left(\frac{k}{d_n+1}\right) = \frac{k^2\pi^2}{B^2} = \lambda_k\left(\prescript{B}{0}{\Delta_{\textnormal{dir}}}\right).
	\end{equation}
	Then it is immediate to prove that if
	$$
	\lambda_k\left(\prescript{B}{0}{\mathcal{L}_{\textnormal{BCs},V(y)}}\right) \neq  \frac{k^2\pi^2}{B^2} = \lambda_k\left(\prescript{B}{0}{\Delta_{\textnormal{dir}}}\right),
	$$
	then
	$$
	\lim_{n \to \infty} \left|\frac{(d_n+1)^2\omega^*_\eta\left(\frac{k}{d_n+1}\right)}{\lambda_k\left(\mathcal{L}_{\textnormal{BCs},p(x),q(x),w(x)}^{(n,\eta)} \right)} -1 \right| = \lim_{n \to \infty} \left|\frac{(d_n+1)^2\omega^*_\eta\left(\frac{k}{d_n+1}\right)}{\lambda_k\left(\mathcal{L}_{\textnormal{BCs},p(x),q(x),w(x)} \right)} -1 \right|=c_k>0.
	$$
	Moreover,
	$$
\lim_{n \to \infty}\lambda_k\left(\mathcal{L}_{\textnormal{BCs},p(x),q(x),w(x)}^{(n,\eta)} \right)=\lambda_k\left(\mathcal{L}_{\textnormal{BCs},p(x),q(x),w(x)} \right)=	\lambda_k\left(\prescript{B}{0}{\mathcal{L}_{\textnormal{BCs},V(y)}}\right) \sim \frac{k^2\pi^2}{B^2} \quad \mbox{for } k \to \infty,
	$$
	and then $c_k \to 0$ as $k \to \infty$.
\end{proof}

\begin{corollary}\label{cor:item_c}
	If
	\begin{equation*}
	\omega_\eta (x,\theta)= \frac{p(x)}{w(x)(b-a)^2}f_\eta(\theta), \qquad (x,\theta) \in [a,b]\times [0,\pi],
	\end{equation*}
	with $f_\eta(\theta)$ nonnegative, nondecreasing and such that $f_\eta(\theta) \sim \theta^2$ as $\theta\to 0$,
	%\begin{equation*}
	%\left|\frac{f_\eta(\theta)}{\theta^2} - 1\right|= O\left(\theta^{2}\right),
	%\end{equation*}
	then item (\ref{item:tilde_omega_asymptotics}) of Proposition \ref{prop:non_good_approximation} is satisfied.
\end{corollary}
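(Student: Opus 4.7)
The plan is to derive the small-$t$ asymptotics of the distribution function $\phi_{\omega_\eta}$ defined in \eqref{eq:rearrangment2} and then invert it to obtain the asymptotics of $\omega_\eta^*$ near $0$, as required by item (\ref{item:tilde_omega_asymptotics}) of Proposition \ref{prop:non_good_approximation}.

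First, I would exploit the separated (tensor-like) structure of $\omega_\eta$. Since $f_\eta$ is nonnegative and nondecreasing with $f_\eta(\theta)\sim\theta^2$ as $\theta\to 0$, its generalized inverse $f_\eta^{-1}$ is well-defined near $0$ and satisfies $f_\eta^{-1}(s)\sim\sqrt{s}$ as $s\to 0^+$. Writing the sublevel set as
\begin{equation*}
\{(x,\theta)\in[a,b]\times[0,\pi]\,:\,\omega_\eta(x,\theta)\leq t\} = \left\{(x,\theta)\,:\,\theta \leq f_\eta^{-1}\!\left(\tfrac{w(x)(b-a)^2}{p(x)}\,t\right)\right\},
\end{equation*}
a direct application of Tonelli's theorem gives
\begin{equation*}
\phi_{\omega_\eta}(t) = \frac{1}{(b-a)\pi}\int_a^b f_\eta^{-1}\!\left(\tfrac{w(x)(b-a)^2}{p(x)}\,t\right) m(dx)
\end{equation*}
for $t>0$ small enough.

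Next, I would pass to the asymptotic limit $t\to 0^+$. Because $p,w\in C([a,b])$ are strictly positive and $[a,b]$ is compact, the quantity $w(x)(b-a)^2/p(x)$ is uniformly bounded above and below by positive constants on $[a,b]$. Therefore, for any $\varepsilon>0$ there exists $t_0>0$ such that for every $t\in(0,t_0)$ and every $x\in[a,b]$,
\begin{equation*}
(1-\varepsilon)\sqrt{\tfrac{w(x)(b-a)^2}{p(x)}\,t} \;\leq\; f_\eta^{-1}\!\left(\tfrac{w(x)(b-a)^2}{p(x)}\,t\right) \;\leq\; (1+\varepsilon)\sqrt{\tfrac{w(x)(b-a)^2}{p(x)}\,t}.
\end{equation*}
Integrating and recalling $B=\int_a^b\sqrt{w(x)/p(x)}\,m(dx)$, I obtain
\begin{equation*}
\phi_{\omega_\eta}(t) = \frac{\sqrt{t}}{\pi}\,B\,\bigl(1+o(1)\bigr),\qquad t\to 0^+.
\end{equation*}

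Finally, I would invert this asymptotic relation. Since the right-hand side is continuous and strictly increasing near $0$, so is $\phi_{\omega_\eta}$ on a right neighborhood of $0$; consequently $\omega_\eta^*$ coincides on a right neighborhood of $0$ with the classical inverse of $\phi_{\omega_\eta}$. Setting $x=\phi_{\omega_\eta}(t)$ and solving for $t$ in the asymptotic identity above yields $t\sim (x\pi/B)^2$, i.e.
\begin{equation*}
\omega_\eta^*(x) \sim \frac{x^2\pi^2}{B^2},\qquad x\to 0^+,
\end{equation*}
which is precisely item (\ref{item:tilde_omega_asymptotics}). The only delicate point is the uniformity (in $x\in[a,b]$) of the asymptotic $f_\eta^{-1}(s)\sim\sqrt{s}$ inside the integral, but this is handled automatically by the compactness of $[a,b]$ and the positive continuous bounds on $p/w$, which confine the argument of $f_\eta^{-1}$ to an arbitrarily small neighborhood of $0$ once $t$ is small enough.
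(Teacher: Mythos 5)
Your argument is in substance the same as the paper's: both compute $\phi_{\omega_\eta}$ from \eqref{eq:rearrangment2} by measuring the $\theta$-sublevel sets (your Tonelli/$f_\eta^{-1}$ formulation is just an explicit rewriting of the paper's sets $\Omega_y(t)$), both use the uniform positive bounds on $w/p$ over the compact interval to get the two-sided estimate $\frac{B}{\pi\sqrt{1+\epsilon}}\sqrt{t}\leq\phi_{\omega_\eta}(t)\leq\frac{B}{\pi\sqrt{1-\epsilon}}\sqrt{t}$ for small $t$, and both invert to obtain $\omega_\eta^*(x)\sim x^2\pi^2/B^2$. The one step you should tighten is the inversion: from the asymptotic equivalence $\phi_{\omega_\eta}(t)\sim\frac{B}{\pi}\sqrt{t}$ you cannot conclude that $\phi_{\omega_\eta}$ is continuous and strictly increasing on a right neighborhood of $0$, so you are not entitled to replace $\omega_\eta^*$ by a classical inverse there. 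Under the corollary's hypotheses $f_\eta$ is only nonnegative and nondecreasing; it may have flat pieces or jumps accumulating at $0$ while still satisfying $f_\eta(\theta)\sim\theta^2$, and these produce jumps, respectively flat stretches, of $\phi_{\omega_\eta}$ arbitrarily close to $t=0$. The fix is immediate and is what the paper does implicitly: work directly with the generalized inverse in \eqref{eq:rearrangment}, i.e.\ $\omega_\eta^*(x)=\inf\{t:\phi_{\omega_\eta}(t)>x\}$, and observe that the two-sided bound on $\phi_{\omega_\eta}$ transfers verbatim to the two-sided bound $(1-\epsilon)\frac{x^2\pi^2}{B^2}\leq\omega_\eta^*(x)\leq(1+\epsilon)\frac{x^2\pi^2}{B^2}$ for $x$ small enough, which gives item (\ref{item:tilde_omega_asymptotics}) without any monotonicity or continuity claim beyond what is automatic. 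With that adjustment your proof is complete and matches the paper's.
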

\begin{proof}
	From \eqref{eq:rearrangment2} and \eqref{eq:rearrangment}, for all $t \in \left[0, t_0\right]$, with
	$$
	t_0 = (b-a)^{-2}\max_{[a,b]}\left(w(y)/p(y)\right)\sup_{[0,\pi]}\left(f_\eta(\theta)\right), 
	$$
	we have that 
	\begin{equation*}
	\phi_{\omega_\eta} (t) = \frac{1}{\pi(b-a)} \int_a^b \left( \int_0^\pi \mathbbm{1}_{\Omega_y(t)} (\theta) d\theta\right)dy,  \quad \mbox{with }\Omega_y(t):=\left\{\theta \in [0,\pi] \, : \, 0\leq f_\eta(\theta)\leq \frac{(b-a)^2w(y)}{p(y)}t\right\}.
	\end{equation*}
	By the monotonicity of $f_\eta$, it holds that $\Omega_y(t)=[0,\theta_y(t)]$, $\theta_y(t):=\sup \{\theta \in [0,\pi] \, : \, f_\eta(\theta)\leq \frac{(b-a)^2w(y)}{p(y)}t\}$, and then $\theta_y(t) \to 0$ as $t \to 0$. So, by the boundedness of $w(y)/p(y)$, for every $\epsilon>0$ there exists $\delta_\epsilon>0$ independent of $y$ such that for every $t\in [0,\min\{t_0;\delta_\epsilon\}]$ then $(1-\epsilon)\theta^2< f_\eta(\theta)< (1+\epsilon)\theta^2$, and 
	\begin{equation*}
	\frac{1}{\pi(b-a)} \int_a^b m\left( \Omega_y^{+}(t) \right)m(dy) \leq \phi_{\omega_\eta} (t) \leq \frac{1}{\pi(b-a)} \int_a^b m\left( \Omega_y^{-}(t) \right)m(dy),
	\end{equation*}
	with
	\begin{equation*}
	\Omega_y^{+}(t):=\left\{\theta \in [0,\pi] \, : \, \theta^2\leq \frac{(b-a)^2}{1+\epsilon}\frac{w(y)}{p(y)}t\right\}, \quad \Omega_y^{-}(t):=\left\{\theta \in [0,\pi] \, : \, \theta^2\leq \frac{(b-a)^2}{1-\epsilon}\frac{w(y)}{p(y)}t\right\}.
	\end{equation*}
	Then it holds that
	\begin{equation*}
	\frac{B}{\pi\sqrt{1+\epsilon}}\sqrt{t} \leq \phi_{\omega_\eta}(t) \leq \frac{B}{\pi\sqrt{1-\epsilon}}\sqrt{t}, \qquad B=\int_a^b \sqrt{\frac{w(y)}{p(y)}}dy.
	\end{equation*}
	By definition \eqref{eq:rearrangment}, $t\to 0$ as $x\to0$ and then
	\begin{equation*}
	(1-\epsilon)\frac{x^2\pi^2}{B^2}\leq \omega^*_\eta (x) \leq (1+\epsilon)\frac{x^2\pi^2}{B^2} \qquad \mbox{for }x \mbox{ small enough},
	\end{equation*}
	and the thesis follows.
\end{proof}

\begin{remark}\label{rem:not_good_approximation}
	The matrix methods of subsections \ref{ssec:FD}, \ref{ssec:IsoG} satisfy the hypothesis of Proposition \ref{prop:non_good_approximation}, see theorems \ref{thm:FD_symbol}, \ref{thm:Galerkin_symbol} and corollaries \ref{cor:FD_uniform}, \ref{cor:Galerkin_uniform}. Therefore, in general, a uniform sampling of their spectral symbols does not provide an accurate approximation of the eigenvalues $\lambda_k\left( \mathcal{L}_{\textnormal{BCs},p(x),q(x),w(x)}^{(n,\eta)}\right)$ and $\lambda_k\left(\mathcal{L}_{\textnormal{BCs},p(x),q(x),w(x)}\right)$, in the sense of the relative error. See subsections \ref{ssec:example_uniform_3_points},\ref{ssec:FD_nonuniform} and \ref{ssec:galerkin} for numerical examples. On the other hand, 
	$$
	\left\|  \omega^*_\eta\left(\frac{k}{n+1}\right)  - \lambda_k\left( \hat{\mathcal{L}}_{\textnormal{BCs},p(x),q(x),w(x)}^{(n,\eta)}\right)\right\|_\infty \to 0 \qquad \mbox{as } n\to \infty,
	$$
	if we exclude the outliers, see Corollary \ref{cor:discrete_Weyl_law} and Figure \eqref{fig:eig_symbol_comparison}.
\end{remark}

\section{$(2\eta+1)$-points central FD discretization}\label{ssec:FD}
Let us consider the following one dimensional self-adjoint operator
\begin{align}
&\mathcal{L}_{\textnormal{dir},p(x)}[u](x):= -\left(p(x)u'(x)\right)', \qquad \textnormal{dom}\left(\mathcal{L}_{\textnormal{dir},p(x)}\right)=W^{1,2}_0((a,b)), \label{eq:SLO1}
%&\textnormal{dom}\left(\mathcal{L}\right)=W^{1,2}_0((a,b)), \label{eq:SLO2}%\left\{ u \in \textnormal{L}^2([a,b]) \, : \,  u' \in \textnormal{AC}((a,b)), \, \mathcal{L}[u] \in \textnormal{L}^2([a,b]), \,  u(a)=u(b)=0\right\},\label{eq:SLO2}
\end{align}
with $p\in C^2([a,b])$ and $p>0$ (see \cite{Davies,EM99}). About a general review of FD methods we refer to \cite{Smith85}. Fix $\eta,n\in \N$ and $\eta \geq 1$. Given a standard equispaced grid $\bfx=\left\{ x_j \right\}_{j=1-\eta}^{n+\eta}\subseteq \left[\bar{a},\bar{b}\right]$, with
$$
\bar{a}=a-(b-a)\frac{\eta-1}{n+1}<a,\quad \bar{b}=b + (b-a)\frac{n+\eta}{n+1}>b,\quad  x_j = a + (b-a)\frac{j}{n+1},
$$ 
let us consider a $C^1$-diffeomorphism $\tau : [a,b]\to [a,b]$ such that $\tau'(x)\neq 0$, $\tau(a)=a, \tau(b)=b$ and let us consider its piecewise $C^1$-extension $\bar{\tau}: \left[\bar{a}, \bar{b}\right] \to  \left[\bar{a}, \bar{b}\right]$ such that
\begin{equation}\label{eq:extending_tau}
\bar{\tau}(x) =\begin{cases}
x & \mbox{if } x \leq a,\\
\tau(x) &\mbox{if } x \in (a,b)\\
x & \mbox{if } x\geq b.
\end{cases}
\end{equation}
By means of the piecewise $C^1$-diffeomorphism $\bar{\tau}$ we have a new (extended) grid $\bar{\bfx}=\left\{ \bar{x}_j \right\}_{j=1-\eta}^{n+\eta}\subset [\bar{a},\bar{b}]$, non necessarily uniformly equispaced, with $\bar{x}_j = \bar{\tau}(x_j)$. Combining together the high-order central FD schemes presented in \cite{AS05,AS11,Li05}, we obtain the following matrix operator as approximation of \eqref{eq:SLO1}:

\begin{equation*}
\mathcal{L}^{(n,\eta)}_{\textnormal{dir},p(\bar{x})} = \begin{bmatrix}
l_{1,1} & \cdots & l_{1,1+\eta} & 0 & \cdots & 0\\
\vdots & l_{2,2} &  & l_{2,2+\eta}  &\cdots &0\\
l_{1+\eta,1}&  & \ddots & & \ddots & 0\\
& \ddots  &  & l_{n-\eta,n-\eta}& \ddots & l_{n-\eta,n}\\
0& & & & & \\
0&\cdots&0&l_{n,n-\eta}& \cdots & l_{n,n}
\end{bmatrix}\in \R^{n\times n} ,
\end{equation*}
where, if we define the $C^0$-extension of $p(x)$ to $[\bar{a},\bar{b}]$ as
\begin{equation*}
\bar{p}(x) = \begin{cases}
p(a) & \mbox{for } x\leq a,\\
p(x) & \mbox{for } x \in (a,b)\\
p(b) & \mbox{for } x\geq b,
\end{cases}
\end{equation*}
and the element $l_{i,j}$ as 
\begin{equation}\label{l_ij}
l_{i,j}=\frac{2\bar{p}\left(\frac{\bar{x}_j + \bar{x}_i}{2}\right)\sum_{\substack{m=i-\eta\\m\neq i,j}}^{i+\eta} \prod_{\substack{k=i-\eta\\k\neq i,j,m}}^{i+\eta}\left(\bar{x}_k - \bar{x}_i\right)}{\left(\bar{x}_j-\bar{x}_i\right)\prod_{\substack{k=i-\eta\\k\neq i,j}}^{i+\eta}\left(\bar{x}_k - \bar{x}_j\right)}, \qquad \mbox{for } i\neq j,\; \begin{cases}
i= 1,\ldots,n,\\
j=i-\eta,\ldots, i+\eta,
\end{cases}
\end{equation}
then the generic matrix element of $\mathcal{L}^{(n,\eta)}_{\textnormal{dir},p(\bar{x})}$ is given by
\begin{equation*}
\left(\mathcal{L}^{(n,\eta)}_{\textnormal{dir},p(\bar{x})}\right)_{i,j} = \begin{cases}l_{i,j} & \mbox{for }  i\neq j,\; |i-j|\leq\eta,\; i,j=1,\ldots,n,\\
-\sum_{\substack{k=i-\eta\\k\neq i}}^{i+\eta} l_{i,k} & \mbox{for } i=j,\; i=1,\ldots,n,\\
0 & \mbox{for } i\neq j,\; |i-j|>\eta,\; i,j=1,\ldots,n.
\end{cases}
\end{equation*}
The extended functions $\bar{\tau},\bar{p}$ on $[\bar{a},\bar{b}]\supset [a,b]$ serve to implement correctly the BCs.  With abuse of notation, we will call $\eta$ the \emph{order of approximation} of the central FD method. We have the following results.
\begin{theorem}\label{thm:FD_symbol}
	In the above assumptions, for $\eta\geq 1$ it holds that
	\begin{enumerate}[(i)]
		\item\label{item_spectral_conv_thm:FD_symbol} the eigenvalues $\lambda_k\left(\mathcal{L}^{(n,\eta)}_{\textnormal{dir},p(\bar{x})}\right)$ are real for every $k$ and
		\begin{equation*}
		\lim_{n \to \infty}\lambda_k\left(\mathcal{L}^{(n,\eta)}_{\textnormal{dir},p(\bar{x})}\right) = \lambda_k\left(\mathcal{L}_{\textnormal{dir},p(x)}\right) \qquad \mbox{for every fixed }k.
		\end{equation*}
		\item\label{item_spectral_symbol_thm:FD_symbol}
		\begin{equation}\label{eq:FD_symbol}
		\left\{ (n+1)^{-2}\mathcal{L}^{(n,\eta)}_{\textnormal{dir},p(\bar{x})} \right\}_n=\left\{ \hat{\mathcal{L}}^{(n,\eta)}_{\textnormal{dir},p(\bar{x})} \right\}_n \sim_{\lambda} \omega_{\eta} \left(x,\theta\right) \qquad (x,\theta) \in [a,b]\times[0,\pi], 
		\end{equation}
		where
		\begin{equation*}
		\omega_{\eta} \left(x,\theta\right)=\frac{p\left(\tau(x)\right)}{\left(\tau'(x)\right)^2(b-a)^2}f_{\eta}(\theta), 
		\end{equation*}
		and
		\begin{equation}\label{FD_coefficients}
		f_\eta (\theta) = d_{\eta,0} + 2\sum_{k=1}^\eta d_{\eta,k}\cos(k\theta), \qquad d_{\eta,k}= \begin{cases}
		(-1)^{k} \frac{\eta!\eta!}{(\eta-k)!(\eta+k)!}\frac{2}{k^2} & \mbox{for } k=1,\cdots \eta,\\
		-2\sum_{j=1}^\eta d_{\eta,j} &\mbox{for } k=0.
		\end{cases}
		\end{equation}
		\item\label{item_spectral_symbol_thm:no_outliers} $\lambda_k\left(\hat{\mathcal{L}}^{(n,\eta)}_{\textnormal{dir},p(\bar{x})}\right)\in R_{\omega_{\eta}}$ for every $k,n$.
	\end{enumerate}
\end{theorem}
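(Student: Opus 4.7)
The plan is to treat items \emph{(i)} and \emph{(iii)} as quick consequences of standard theory and to concentrate on item \emph{(ii)}. For \emph{(i)}, I will first observe that the matrix is real symmetric---the midpoint evaluation of $\bar p$ together with the symmetric index range $\{i-\eta,\ldots,i+\eta\}$ in \eqref{l_ij} forces $l_{i,j}=l_{j,i}$---so its spectrum is real. Convergence $\lambda_k\!\left(\mathcal{L}^{(n,\eta)}_{\textnormal{dir},p(\bar x)}\right)\to\lambda_k\!\left(\mathcal{L}_{\textnormal{dir},p(x)}\right)$ for each fixed $k$ will then follow from a classical consistency-plus-stability argument: Taylor expansion of \eqref{l_ij} around $x_i$ gives a local truncation error of order $2\eta$ on the interior, which, combined with the compactness of $\mathcal{L}_{\textnormal{dir},p(x)}^{-1}$ and the min--max principle, yields spectral convergence for each fixed eigenvalue. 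Item \emph{(iii)} will be a direct appeal to \cite[Theorem 2.2]{Serra00}, since $\hat{\mathcal L}^{(n,\eta)}_{\textnormal{dir},p(\bar x)}$ is Hermitian and belongs to the class of banded GLT sequences treated there.

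For item \emph{(ii)} I will first dispatch the uniform-grid case $\tau=\textnormal{id}$. Using the Lagrange-interpolation identity that underlies the stencil \eqref{l_ij}, the sum-of-products in the numerator combined with the product in the denominator simplifies, after multiplication by $(n+1)^{-2}$, to
\begin{equation*}
(n+1)^{-2}\,l_{i,j}\;=\;\frac{\bar p\!\bigl((x_i+x_j)/2\bigr)}{(b-a)^2}\;d_{\eta,\,j-i},\qquad |i-j|\leq \eta,
\end{equation*}
with the $d_{\eta,k}$ of \eqref{FD_coefficients} being exactly the standard $(2\eta+1)$-point central weights for the second derivative (the diagonal prescription in the definition of $\mathcal{L}^{(n,\eta)}_{\textnormal{dir},p(\bar x)}$ matches $d_{\eta,0}=-2\sum_{k=1}^\eta d_{\eta,k}$ automatically). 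The weighted matrix on the uniform grid thus equals $D^{(n)}\,T^{(n)}(f_\eta)$ plus a low-rank boundary correction, where $D^{(n)}$ samples $p(x)/(b-a)^2$. Applying the GLT axioms (Toeplitz axiom, diagonal sampling axiom, product rule, and invariance under zero-distributed perturbations), the sequence is GLT with symbol $p(x) f_\eta(\theta)/(b-a)^2$; realness of the symbol then upgrades GLT to $\sim_\lambda$.

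The non-uniform case will be obtained by perturbation. For interior indices $i=\eta+1,\ldots,n-\eta$, where $\bar\tau=\tau$ is smooth, the first-order Taylor expansion $\bar x_k-\bar x_i=\tau'(x_i)(x_k-x_i)+O(h^2)$ with $h=1/(n+1)$ factors $\tau'(x_i)$ out of \eqref{l_ij} with net power $-2$, producing
\begin{equation*}
(n+1)^{-2}\,l_{i,j}\;=\;\frac{p\!\bigl(\tau(x_i)\bigr)}{\bigl(\tau'(x_i)\bigr)^2(b-a)^2}\,d_{\eta,\,j-i}\;+\;o(1),
\end{equation*}
uniformly in $i,j$. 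This identifies the matrix with $D^{(n)}\!\bigl(p(\tau(\cdot))/(\tau'(\cdot))^2(b-a)^2\bigr)\,T^{(n)}(f_\eta)$ modulo a zero-distributed correction, and the GLT product rule then yields $\omega_\eta(x,\theta)=p(\tau(x))(\tau'(x))^{-2}(b-a)^{-2}f_\eta(\theta)$, i.e.\ \eqref{eq:FD_symbol}.

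The main obstacle will be twofold. First, the combinatorial simplification of the ratio of products in \eqref{l_ij} into the closed-form weights $d_{\eta,k}$ (and, under the diffeomorphism $\tau$, into the scaled weights) must be carried out carefully; I would do so via the representation of $d_{\eta,k}$ through divided differences of $x\mapsto x^{-1}$, which is the standard route to central FD weights. Second, the boundary behaviour is delicate: $\bar\tau$ is only piecewise $C^1$, so the Taylor expansion fails at $x=a,b$ and the identification above is invalid on $O(\eta)$ rows. This spoiled region contributes a rank-$O(\eta)$ correction independent of $n$, hence a zero-distributed perturbation that is absorbed into the GLT error. Checking, via the standard GLT criteria, that both the interior $O(h)$ Taylor remainder and this boundary correction are indeed zero-distributed constitutes the technical core of the argument.
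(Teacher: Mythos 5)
Your overall strategy for item \emph{(ii)} --- reduce to the constant-coefficient uniform-grid Toeplitz matrix $(b-a)^{-2}T^{(n)}(f_\eta)$, then recover the variable-coefficient, non-uniform case through diagonal sampling, the GLT product rule and zero-distributed perturbations --- is essentially the proof given in the paper, and your treatment of the $O(\eta)$ boundary rows (where $\bar\tau$ is only piecewise $C^1$) as a low-rank, hence zero-distributed, correction is also how the paper's extensions $\bar\tau,\bar p$ are meant to operate. Item \emph{(iii)} is likewise identical: a direct appeal to \cite[Theorem 2.2]{Serra00}.

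There is, however, a genuine error in your item \emph{(i)}: the matrix $\mathcal{L}^{(n,\eta)}_{\textnormal{dir},p(\bar x)}$ is \emph{not} symmetric on a non-uniform grid. In \eqref{l_ij} the sum over $m$ and the products over $k$ run over the window $\{i-\eta,\ldots,i+\eta\}$ centred at $i$, not over a set symmetric in $\{i,j\}$; exchanging $i$ and $j$ changes the window to $\{j-\eta,\ldots,j+\eta\}$, and already for $\eta=1$ one computes $l_{i,i+1}=l_{i+1,i}$ only when $\bar x_{i+1}-\bar x_{i-1}=\bar x_{i+2}-\bar x_i$, which fails for a genuinely non-uniform grid $\bar x_j=\bar\tau(x_j)$. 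This is precisely why the paper's proof of item \emph{(ii)} writes $\hat{\mathcal L}^{(n,\eta)}_{\textnormal{dir},p(\bar x)}=X_n+Y_n$ with an explicitly \emph{symmetrized} core $X_n$ (midpoint evaluations of both $\bar p$ and $\bar\tau'$) and a non-Hermitian correction $Y_n$ with $n^{-1}\|Y_n\|_1\to 0$, and why item \emph{(i)} is attributed to \cite{C69}, whose subject is \emph{nonselfadjoint} Sturm--Liouville discretizations. Consequently realness of the spectrum does not follow from symmetry, and your consistency-plus-min--max sketch, which presupposes self-adjointness of the discrete operator, does not apply as stated (the paper itself does not prove item \emph{(i)} but defers it to \cite{C69,Gary65,CFL}). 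The same oversight leaks into your item \emph{(ii)}: identifying the weighted matrix with $D^{(n)}T^{(n)}(f_\eta)$ modulo a zero-distributed term yields the GLT (hence singular-value) symbol, but upgrading $\sim_{\textnormal{GLT}}$ to $\sim_\lambda$ requires decomposing the matrix as a Hermitian part plus a perturbation bounded in spectral norm and $o(n)$ in trace norm, as in \cite[\textbf{GLT 2}]{GS17}; \textquotedblleft realness of the symbol\textquotedblright{} is not sufficient, and $D^{(n)}T^{(n)}(f_\eta)$ is itself not Hermitian. The fix is exactly the paper's construction of the symmetric $\hat{\mathfrak{L}}^{(n,\eta)}_{\textnormal{dir},p(\bar x)}$ together with the verification that the discrepancy is uniformly bounded with vanishing normalized trace norm.
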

\begin{remark}\label{rem:spatial_spectral_variables}
	The spectral symbol $\omega_\eta(x,\theta)$ consists of the product of two functions: 
	\begin{itemize}
		\item of the function $\frac{p\left(\tau(x)\right)}{\left(\tau'(x)\right)^2(b-a)^2}$ which consists of the diffusion coefficient $p(x)$ and the diffeomorphism $\tau(x)$, which are all intrinsic to the differential operator \eqref{eq:SLO1} itself and depend on the spatial variable $x$;
		\item of $f_\eta(\theta)$, which is intrinsic to the discretization method and depends on the spectral variable $\theta$.
	\end{itemize}
\end{remark}

\begin{proof}
	The proof of item \eqref{item_spectral_conv_thm:FD_symbol} is long and technical, and we avoid to present it here. Let us just mention that it can be proved by a straightforward generalization of standard techniques, see \cite[Theorem 1]{C69} and \cite{Gary65,CFL}. About item \eqref{item_spectral_symbol_thm:FD_symbol}, we recall that the \textquotedblleft hat\textquotedblright means that the matrix operator has been weighted by $(n+1)^2$. Let us preliminarily observe that in the case of $\tau(x)=x$ and $p(x)\equiv  1$ then $\hat{\mathcal{L}}^{(n,\eta)}_{\textnormal{dir}}$ is a symmetric Toeplitz matrix defined by
	$$
	\left(\hat{\mathcal{L}}^{(n,\eta)}_{\textnormal{dir}}\right)_{i,j} = \frac{d_{\eta,|i-j|}}{(b-a)^{2}} \quad \mbox{or equivalently}\quad \mathcal{L}^{(n,\eta)}_{\textnormal{dir}}= (b-a)^{-2}T_n\left(f_\eta\right) \quad \mbox{(see Section \ref{ssec:notations})},
	$$
	where $d_{\eta,k}$ are the coefficients of the trigonometric polynomial $f_\eta$ in \eqref{FD_coefficients}, see \cite[Corollary 2.2]{Li05} and \cite[Equation (27)]{KO99}. By standard results on the eigenvalues distribution of Toeplitz matrices (see for example \cite[\textbf{T 4} p. 168]{GS17}) it holds that 
	\begin{equation}\label{eq:1}
	\left\{\hat{\mathcal{L}}^{(n,\eta)}_{\textnormal{dir}}\right\}_n \sim_{\lambda} \frac{f_\eta (\theta)}{(b-a)^2}, \qquad \theta \in [0,\pi].
	\end{equation}
	The strategy to prove \eqref{item_spectral_symbol_thm:FD_symbol} is then the following:
	\begin{itemize}
		\item show that $\hat{\mathcal{L}}^{(n,\eta)}_{\textnormal{dir},p(\bar{x})} = X_n + Y_n$, with $X_n$ symmetric and such that $\|X_n\|,\|Y_n\|\leq C$, $n^{-1}\|Y_n\|_1\to 0$ as $n \to \infty$, where $\|\cdot\|$ and $\|\cdot\|_1$ are the spectral norm and the Schatten $1$-norm, respectively;
		\item show that $\{X_n\}_n \sim_{\textnormal{GLT}} \omega_{\eta} \left(x,\theta\right)$, see \cite[Definition 8.1]{GS17};
		\item Conclude that $\{\hat{\mathcal{L}}^{(n,\eta)}_{\textnormal{dir},p(\bar{x})}\}_n\sim_{\lambda} \omega_{\eta} \left(x,\theta\right)$ by \cite[\textbf{GLT 2} p. 170]{GS17}.
	\end{itemize} 
	Let us define $X_n=\hat{\mathfrak{L}}^{(n,\eta)}_{\textnormal{dir},p(\bar{x})}$, substituting the $l_{i,j}$ entries \eqref{l_ij} with 
	\begin{equation*}
	\mathfrak{l}_{i,j}:=\frac{\bar{p}\left(\frac{\bar{x}_j + \bar{x}_i}{2}\right)}{\left[\bar{\tau}'\left(\frac{x_j + x_i}{2}\right)\right]^2(b-a)^2}\cdot d_{\eta,|i-j|},
	\end{equation*}
	with the convention that in $a$ and $b$ we take the right and left limit of $\bar{\tau}'$, respectively. Then, $\hat{\mathfrak{L}}^{(n,\eta)}_{\textnormal{dir},p(\bar{x})}$ is symmetric and $\left\|\hat{\mathfrak{L}}^{(n,\eta)}_{\textnormal{dir},p(\bar{x})}\right\|\leq \max_{[a,b]}\left\{ |p(\tau(x))/\tau'(x)^2|\right\}\cdot\max_{[0,\pi]}\left\{|f_\eta (\theta)|\right\}$. Moreover, by the regularity of $p(x)$ and $\tau(x)$, it is not difficult to prove that
	\begin{equation*}
	\left(\hat{\mathcal{L}}^{(n,\eta)}_{\textnormal{dir},p(\bar{x})} - \hat{\mathfrak{L}}^{(n,\eta)}_{\textnormal{dir},p(\bar{x})}\right) = Y_n,
	\end{equation*} 
	such that 
	$$
	\|Y_n\|\leq c, \qquad n^{-1} \|Y_n\|_1 \to 0.
	$$
	If we finally show that $\{\hat{\mathfrak{L}}^{(n,\eta)}_{\textnormal{dir},p(\bar{x})}\}_n\sim_{\textnormal{GLT}}\omega_{\eta}(x,\theta)$, we can conclude. Define now
	$$
	S_n := \diag_{i=1,\ldots,n}\left\{\frac{p\left(\bar{x}_i\right)}{\left[\tau'\left(x_i\right)\right]^2}\right\}\hat{\mathcal{L}}^{(n,\eta)}_{\textnormal{dir}}.
	$$ 
	By \eqref{eq:1} and \cite[\textbf{GLT 3-4} p. 170]{GS17}, it holds that $\{S_n\}_n\sim_{\textnormal{GLT}} \omega_{\eta}(x,\theta)$. Again, by the regularity of $p$ and $\tau$, by direct calculation it is possible to show that $\left\|\hat{\mathfrak{L}}^{(n,\eta)}_{\textnormal{dir},p(\bar{x})} - S_n\right\|\to 0$ as $n\to \infty$, and therefore that $\{ \hat{\mathfrak{L}}^{(n,\eta)}_{\textnormal{dir},p(\bar{x})} - S_n \}_n\sim_{\textnormal{GLT}}\,0$ by \cite[\textbf{Z 2} p. 167]{GS17}. Then, by the GLT algebra \cite[\textbf{GLT 3-4} p. 170]{GS17} it follows that $\{\hat{\mathfrak{L}}^{(n,\eta)}_{\textnormal{dir},p(\bar{x})}\}_n\sim_{\textnormal{GLT}}\omega_{\eta}(x,\theta)$. Finally, item \eqref{item_spectral_symbol_thm:no_outliers} can be recovered by \cite[Theorem 2.2]{Serra00}.
\end{proof}

\begin{corollary}\label{cor:FD_uniform}
	For every fixed $\eta$, the function $f_\eta$ is differentiable, nonnegative, monotone increasing on $[0,\pi]$ and it holds that
	\begin{equation*}
	f_\eta(\theta)\sim \theta^2 \quad \mbox{as }\theta \to 0, \qquad \lim_{\eta \to \infty}\sup_{\theta \in [0,\pi]}\left|f_\eta(\theta) - \theta^2\right|=0.
	\end{equation*}
\end{corollary}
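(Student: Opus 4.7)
The plan is to reduce all four assertions to the single closed-form identity
\begin{equation*}
f_\eta(\theta) = \sum_{k=1}^\eta \frac{2}{k^2\binom{2k}{k}}\bigl(2\sin(\theta/2)\bigr)^{2k},
\end{equation*}
which exhibits $f_\eta$ as a polynomial in $s := 2\sin(\theta/2)$ with manifestly positive coefficients. Once this identity is established, everything follows at once: $f_\eta$ is a trigonometric polynomial and hence smooth; every summand is nonnegative on $[0,\pi]$, so $f_\eta\geq 0$; termwise differentiation yields
\begin{equation*}
f_\eta'(\theta) = \cos(\theta/2)\sum_{k=1}^\eta \frac{4}{k\binom{2k}{k}}\bigl(2\sin(\theta/2)\bigr)^{2k-1}\geq 0 \quad \text{on } [0,\pi],
\end{equation*}
giving monotonicity; and the leading $k=1$ summand equals $(2\sin(\theta/2))^2 = \theta^2 - \theta^4/12 + \cdots$ while the remaining summands contribute only $O(\theta^{2\eta+2})$, so $f_\eta(\theta)\sim \theta^2$ as $\theta\to 0$.

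To establish the identity I would invoke the classical MacLaurin expansion
\begin{equation*}
\bigl(2\arcsin(y/2)\bigr)^2 = \sum_{k=1}^\infty \frac{2}{k^2\binom{2k}{k}}\,y^{2k}, \qquad |y|\leq 2,
\end{equation*}
which converges absolutely on the closed disk since, by Stirling, the general term at $y=2$ is of order $k^{-3/2}$. Substituting $y = 2\sin(\theta/2)$, so that $2\arcsin(y/2) = \theta$ for $\theta\in[0,\pi]$, yields $\theta^2 = \sum_{k=1}^\infty \frac{2}{k^2\binom{2k}{k}}(2\sin(\theta/2))^{2k}$. Let $\tilde{f}_\eta$ denote the partial sum up to $k=\eta$. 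Since $(2\sin(\theta/2))^{2k} = (2 - 2\cos\theta)^k$, $\tilde{f}_\eta$ is a trigonometric polynomial of degree $\eta$ of the form $\tilde{d}_0 + 2\sum_{k=1}^\eta \tilde{d}_k\cos(k\theta)$, it vanishes at $\theta=0$, and the Taylor tail yields $\tilde{f}_\eta(\theta) - \theta^2 = O(\theta^{2\eta+2})$ as $\theta\to 0$. On the other hand, the coefficients $d_{\eta,0},\ldots,d_{\eta,\eta}$ in \eqref{FD_coefficients} are precisely the unique solution of the $(\eta+1)\times(\eta+1)$ linear system enforcing the same accuracy condition $f_\eta(\theta) = \theta^2 + O(\theta^{2\eta+2})$ at $\theta=0$; that system is of Vandermonde type in the distinct squares $1,4,\ldots,\eta^2$ and hence nonsingular, so $f_\eta \equiv \tilde{f}_\eta$.

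Uniform convergence then follows from the identity itself, since the difference is the nonnegative series tail
\begin{equation*}
\theta^2 - f_\eta(\theta) = \sum_{k=\eta+1}^\infty \frac{2}{k^2\binom{2k}{k}}\bigl(2\sin(\theta/2)\bigr)^{2k} \leq \sum_{k=\eta+1}^\infty \frac{2\cdot 4^k}{k^2\binom{2k}{k}},
\end{equation*}
and the right-hand side is the tail of the same convergent series used above, so it tends to $0$ as $\eta\to\infty$ uniformly in $\theta\in[0,\pi]$. The principal technical obstacle in this plan is the uniqueness step identifying $f_\eta$ with $\tilde{f}_\eta$: one must justify that the moment-matching conditions pin down the FD coefficients uniquely, which reduces to nonsingularity of the Vandermonde determinant in $k^2$. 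Everything else amounts to routine Taylor analysis and series manipulation.
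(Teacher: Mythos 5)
Your closed-form identity $f_\eta(\theta)=\sum_{k=1}^\eta\frac{2}{k^2\binom{2k}{k}}\bigl(2\sin(\theta/2)\bigr)^{2k}$ is correct (it reproduces $2-2\cos\theta$ for $\eta=1$ and $\tfrac52-\tfrac83\cos\theta+\tfrac16\cos2\theta$ for $\eta=2$), and your route is genuinely different from the paper's. The paper treats the four properties separately: $f_\eta\sim\theta^2$ by a direct Taylor/binomial computation on the coefficients \eqref{FD_coefficients}; nonnegativity and monotonicity by writing $f_\eta'(\theta)=2\sum_{k=1}^\eta k|d_{\eta,k}|\sin(k(\pi-\theta))$ and invoking the Askey--Steinig theorem on positive sine sums \cite{AS74}; and uniform convergence from $d_{\eta,k}\to(-1)^k 2/k^2$ together with the Fourier expansion of $\theta^2$ on $[0,\pi]$ (with the final uniformity step stated rather tersely). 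Once your identity is in hand, all four assertions follow elementarily, the appeal to \cite{AS74} disappears, and you get the sharper quantitative statement $0\le\theta^2-f_\eta(\theta)\le\sum_{k>\eta}\frac{2\cdot 4^k}{k^2\binom{2k}{k}}$, i.e.\ monotone convergence from below with an explicit rate --- a real gain over the paper's argument. (One small slip: for $\eta\ge2$ the summands with $k=2,\dots,\eta$ contribute $O(\theta^4)$, not $O(\theta^{2\eta+2})$; what is $O(\theta^{2\eta+2})$ is the difference $\theta^2-f_\eta(\theta)$. Since only $o(\theta^2)$ is needed for $f_\eta\sim\theta^2$, this is harmless.)

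The step that still needs to be nailed down is not the one you flag. The Vandermonde uniqueness is the easy half and you handle it correctly; the under-justified half is the assertion that the explicit coefficients \eqref{FD_coefficients} satisfy the accuracy condition $f_\eta(\theta)=\theta^2+O(\theta^{2\eta+2})$. In the paper these coefficients are given by a closed formula, not defined as solutions of the moment system, so you must either cite \cite{Li05} (Corollary 2.2) and \cite{KO99}, where exactly these coefficients are derived from the Taylor order conditions, or verify the conditions directly: $\sum_{k=1}^\eta d_{\eta,k}k^2=-1$ and $\sum_{k=1}^\eta d_{\eta,k}k^{2m}=0$ for $m=2,\dots,\eta$. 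The first is precisely the computation the paper performs in its own proof of the $\sim\theta^2$ claim; the others reduce, after inserting the formula for $d_{\eta,k}$, to $\sum_{k=-\eta}^{\eta}(-1)^k\binom{2\eta}{\eta+k}k^{2m-2}=0$ for $m=2,\dots,\eta$, which holds because a $2\eta$-th order finite difference annihilates every polynomial of degree less than $2\eta$. With that supplement (or citation) your identification $f_\eta\equiv\tilde f_\eta$ is complete and the whole argument goes through.
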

\begin{proof}
	$f_\eta(\theta)$ is obviously $C^\infty([0,\pi])$. Let us begin to prove that $f_\eta(\theta)\sim \theta^2$ as $\theta \to 0$ for every fixed $\eta$, and that it is monotone nonnegative on $[0,\pi]$. By the Taylor expansion at $\theta=0$ we get
\begin{align*}
f_\eta(\theta) %&= d_{\eta,0} + 2 \sum_{k=1}^\eta d_{\eta,k}\cos(k\theta)\\
&= d_{\eta,0} + \sum_{\substack{k=-\eta\\k\neq 0}}^\eta d_{\eta,k}\cos(k\theta)\\
&= d_{\eta,0} + \sum_{\substack{k=-\eta\\k\neq 0}}^\eta d_{\eta,k}\left(1 - \frac{(k\theta)^2}{2}\right) + o(\theta^3)\\
&=  -\theta^2\sum_{\substack{k=-\eta\\k\neq 0}}^\eta  (-1)^{k} \frac{\eta!\eta!}{(\eta-k)!(\eta+k)!} + o(\theta^3)\\
&= -\theta^2 \left[(-1)^\eta\frac{\eta!\eta!}{(2\eta)!}\sum_{\substack{m=0\\m\neq \eta}}^{2\eta} (-1)^{m}\binom{2\eta}{m}\right] + o(\theta^3)\\
&= \theta^2 + o(\theta^3).
\end{align*} 
Moreover, let us observe that
\begin{equation*}
f_\eta(\theta)=  d_{\eta,0}+ 2 \sum_{k=1}^\eta \left|d_{\eta,k}\right|\cos(k(-\theta+\pi)),\quad
f_\eta'(\theta)=  2 \sum_{k=1}^\eta k\left |d_{\eta,k}\right|\sin(k(-\theta+\pi)).
\end{equation*}
Define then 
$$
g (\psi) = \sum_{k=1}^\eta a_{k} \sin(k\psi), \qquad \mbox{with }\psi=-\theta+\pi \in [0,\pi], \qquad \mbox{and } a_{k} = 2k\left|d_{\eta,k}\right|.
$$
It is immediate to check that $a_1 \geq a_2 \geq \ldots \geq a_n > 0$ and that
$$
(2k)a_{2k} \leq (2k-1)a_{2k-1} \quad \forall k \geq 1.
$$	
By \cite[Theorem 1]{AS74} we can conclude that $g(\psi)>0$ on $(0,\pi)$ and then $f_\eta'(\theta)>0$ on $(0,\pi)$. Since $f_\eta(0)=0$, we deduce that $f_\eta(\theta)\geq 0$ on $[0,\pi]$. 

The second part of the thesis is an immediate consequence of identities \eqref{FD_coefficients}. Indeed, since
%\begin{equation*}
%\lim_{\eta \to \infty}\sum_{k=1}^\eta \left|d_{\eta,k} \right|\leq \sum_{k=1}^\eta\frac{2}{k^2}\leq \frac{\pi^2}{3} \qquad \forall \eta,k\geq 1,
%\end{equation*}
%and
\begin{equation*}
\lim_{\eta \to \infty} d_{\eta,k}=(-1)^k \frac{2}{k^2} \quad\mbox{ and }\quad
\lim_{\eta \to \infty}\sum_{k=1}^{\eta} (-1)^k\left(\frac{2}{k^2} - \left|d_{\eta,k} \right|\right)=0,
\end{equation*}
we conclude that
$$
-\lim_{\eta\to \infty} 2\sum_{k=1}^\eta d_{\eta,k} = -2\sum_{k=1}^\infty (-1)^{k}\frac{2}{k^2}= \frac{\pi^2}{3}.
$$
For the same reasons, for every fixed $\theta \in [0,\pi]$ it holds that
\begin{equation*}
\lim_{\eta\to \infty} f_\eta(\theta) = \frac{\pi^2}{3} + 4\sum_{k=1}^\infty \frac{(-1)^k}{k^2}\cos(k\theta)= \theta^2,
\end{equation*}
being $\left\{\pi^2/3\right\}\cup \left\{(-1)^k4/k^2\right\}_{k\geq1}$ the Fourier coefficients of $\theta^2$ on $[0,\pi]$, and the convergence is then uniform.
\end{proof}

\section{Isogeometric Galerkin discretization by B-splines of degree $\eta$ and smoothness $C^{\eta-1}$}\label{ssec:IsoG}

For a general review of the IgA discretization method we refer the reader to \cite{CHB,HER}, so we skip all the introductions and we present directly some known results.

\begin{theorem}\label{thm:Galerkin_symbol}
	Let $(a,b)$ be discretized by a uniform mesh $\left\{ x_j \right\}_{j=1}^n$ of step-size $(n+1)^{-1}$ and let $\tau : [a,b]\to [a,b]$ a $C^1$-diffeomorphism such that $\tau(a)=a,\tau(b)=b$ and $\tau'(x)\neq 0$ for every $x \in [a,b]$. Let $\bar{x}:=\tau(x)$, $\eta\geq 1$, and let us indicate with $\mathcal{L}^{(n,\eta)}_{\textnormal{dir},p(\bar{x})}$ the discrete operator obtained from \eqref{eq:SLO1} by an IgA discretization scheme with B-splines of degree $\eta$ and  smoothness $C^{\eta-1}$. Then
	\begin{enumerate}[(i)]
		\item\label{spect_conv_IgA} the eigenvalues $\lambda_k\left(\mathcal{L}^{(n,\eta)}_{\textnormal{dir},p(\bar{x})}\right)$ are real for every fixed $k$ and
		$$
		\lim_{n \to \infty}\lambda_k\left(\mathcal{L}^{(n,\eta)}_{\textnormal{dir},p(\bar{x})}\right)= \lambda_k\left(\mathcal{L}_{\textnormal{dir},p(x)}\right);
		$$
		\item\label{symb_IgA} it holds that
		$$ 
		\left\{(n+1)^{-2}\mathcal{L}^{(n,\eta)}_{\textnormal{dir},p(\bar{x})}\right\}_n=\left\{\hat{\mathcal{L}}^{(n,\eta)}_{\textnormal{dir},p(\bar{x})}\right\}_n\sim_{\lambda}\omega_{\eta}(x,\theta), \qquad (x,\theta) \in [a,b]\times [0,\pi],
		$$ 
		where
		\begin{align}
		\omega_\eta( x,\theta)& =\frac{p(\tau(x))}{\left(\tau'(x)\right)^2(b-a)^2}\,f_\eta(\theta),\label{e-symbol}
		\end{align}
	\end{enumerate}
	For  example, for $\eta=1,2$, $f_\eta(\theta)$ has the following analytical expressions
	\begin{equation*}
	f_1(\theta)=\frac{6(1-\cos\theta)}{2+\cos\theta},\qquad
	f_2(\theta)=\frac{20(3-2\cos\theta-\cos(2\theta))}{33+26\cos\theta+\cos(2\theta)}.%\\[3pt]
%	f_3(\theta)&=\frac{42(40-15\cos\theta-24\cos(2\theta)-\cos(3\theta))}{1208+1191\cos\theta+120\cos(2\theta)+\cos(3\theta)},\\[3pt]
%	f_4(\theta)&=\frac{72(1225-154\cos\theta-952\cos(2\theta)-118\cos(3\theta)-\cos(4\theta))}{78095+88234\cos\theta+14608\cos(2\theta)+502\cos(3\theta)+\cos(4\theta)}.
	\end{equation*}
\end{theorem}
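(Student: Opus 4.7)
The plan is to mirror the strategy used in the proof of Theorem \ref{thm:FD_symbol}, replacing the finite-difference stencil by the stiffness/mass pair of the IgA Galerkin discretization. For item \eqref{spect_conv_IgA}, I would invoke standard Ritz--Galerkin spectral convergence theory: $\mathcal{L}_{\textnormal{dir},p(x)}$ is self-adjoint with compact resolvent on $W^{1,2}_0((a,b))$, the B-spline spaces $V_n$ of degree $\eta$ and smoothness $C^{\eta-1}$ are nested and dense in $W^{1,2}_0((a,b))$ as $n\to\infty$, and the matrix $\mathcal{L}^{(n,\eta)}_{\textnormal{dir},p(\bar{x})}=M_n^{-1}K_n$ (with $K_n$ the stiffness and $M_n$ the mass matrix of the pulled-back bilinear form) is similar to the symmetric matrix $M_n^{-1/2}K_nM_n^{-1/2}$; hence its spectrum is real, and the min--max principle yields $\lambda_k\bigl(\mathcal{L}^{(n,\eta)}_{\textnormal{dir},p(\bar{x})}\bigr)\to\lambda_k\bigl(\mathcal{L}_{\textnormal{dir},p(x)}\bigr)$ for every fixed $k$.

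For item \eqref{symb_IgA}, I would first handle the reference case $p\equiv 1$, $\tau(x)=x$. Here $K_n$ and $M_n$ differ from the Toeplitz matrices $T_n(e_\eta)$ and $T_n(h_\eta)$ only through an $O(\eta)$-rank correction localised at the boundary, coming from the truncated boundary B-splines; the generating symbols $e_\eta$ and $h_\eta$ are trigonometric polynomials obtained directly from the Fourier coefficients of the cardinal B-spline of degree $\eta$ and of its derivative, and defining $f_\eta:=(b-a)^{2}\,e_\eta/h_\eta$ one recovers the closed-form expressions stated for $\eta=1,2$. Since bounded-rank corrections are GLT zero-distributed (\cite[\textbf{Z 2} p.~167]{GS17}), the GLT algebra (\cite[\textbf{T 4}, \textbf{GLT 3-4} p.~170]{GS17}) then gives $\{\hat{\mathcal{L}}^{(n,\eta)}_{\textnormal{dir}}\}_n\sim_{\textnormal{GLT}}(b-a)^{-2}f_\eta(\theta)$ in the reference case.

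To introduce the variable coefficient and the diffeomorphism, set $c(x):=p(\tau(x))/(\tau'(x))^2$, which is continuous on $[a,b]$, and $D_n:=\diag_{i=1,\ldots,d_n}\{c(x_i)\}$. Exploiting the local support of B-splines (each row of the stiffness matrix has only $O(\eta)$ nonzero entries, all indexed by knots within distance $O(n^{-1})$ of $x_i$) together with the uniform continuity of $c$, I would decompose
$$
\hat{\mathcal{L}}^{(n,\eta)}_{\textnormal{dir},p(\bar{x})}=D_n\hat{\mathcal{L}}^{(n,\eta)}_{\textnormal{dir}}+Z_n,\qquad \|Z_n\|\leq C,\quad d_n^{-1}\|Z_n\|_1\to 0,
$$
so that $\{Z_n\}_n\sim_{\textnormal{GLT}} 0$. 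The GLT algebra then yields $\{\hat{\mathcal{L}}^{(n,\eta)}_{\textnormal{dir},p(\bar{x})}\}_n\sim_{\textnormal{GLT}}\omega_\eta(x,\theta)$, from which the eigenvalue distribution \eqref{e-symbol} follows. The main obstacle is the Schatten $1$-norm estimate on $Z_n$: one must verify that replacing the continuous coefficient $c(x)$ by its pointwise sampling at nodes incurs only an $O(n^{-1})$ entrywise error over a banded matrix with $O(d_n)$ nonzero entries, so that $\|Z_n\|_1=O(1)$ and hence $d_n^{-1}\|Z_n\|_1=o(1)$; this is where the $C^1$-regularity of $\tau$ and the continuity of $p$ are essential.
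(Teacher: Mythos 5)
Be aware first that the paper does not actually prove this theorem: item (\ref{spect_conv_IgA}) is delegated to \cite{BBCHS06,PDC} and item (\ref{symb_IgA}) to \cite[Theorem 10.15]{GS17}, so your sketch is effectively a reconstruction of the cited literature proof, patterned on the paper's FD argument (Theorem \ref{thm:FD_symbol}). Much of it is sound: the generalized eigenproblem $K_nu=\lambda M_nu$, realness via similarity of $M_n^{-1}K_n$ with $M_n^{-1/2}K_nM_n^{-1/2}$, and Babu\v{s}ka--Osborn/min--max convergence for fixed $k$ (note that nestedness of the spline spaces for consecutive $n$ is generally false, but also unnecessary; density plus the approximation property suffices). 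However, two steps in item (\ref{symb_IgA}) have genuine gaps.

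First, your decomposition $\hat{\mathcal{L}}^{(n,\eta)}_{\textnormal{dir},p(\bar{x})}=D_n\hat{\mathcal{L}}^{(n,\eta)}_{\textnormal{dir}}+Z_n$, justified by an entrywise $O(n^{-1})$ estimate over a banded matrix, cannot be run as stated: the Galerkin matrix is $M_n^{-1}K_n$, and while $K_n$ and $M_n$ are banded, $M_n^{-1}$ is a full matrix (its entries only decay exponentially away from the diagonal), so the count ``$O(d_n)$ nonzero entries, each $O(n^{-1})$'' is not available for $Z_n$. The standard repair --- and the route of \cite[Theorem 10.15]{GS17} --- is to prove separately that the suitably scaled stiffness and mass sequences are GLT, with symbols proportional to $\frac{p(\tau(x))}{\tau'(x)}e_\eta(\theta)$ and $\tau'(x)h_\eta(\theta)$ respectively (there your diagonal-sampling plus low-rank boundary-correction argument does work, since both matrices are banded), and then invoke closure of the GLT class under products and inversion (legitimate because the mass symbol $\tau'(x)h_\eta(\theta)$ is nonzero a.e.) to conclude $\left\{(n+1)^{-2}M_n^{-1}K_n\right\}_n\sim_{\textnormal{GLT}}\omega_\eta$. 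Second, from $\sim_{\textnormal{GLT}}$ you pass directly to the eigenvalue distribution, but the GLT axioms yield $\sim_\lambda$ only for Hermitian (or Hermitian-plus-small-perturbation) sequences, and $M_n^{-1}K_n$ is not symmetric. One must route the distribution statement through the symmetrized sequence $M_n^{-1/2}K_nM_n^{-1/2}$, which is similar to $M_n^{-1}K_n$ and hence has the same spectrum, and show it carries the same symbol (e.g.\ using that continuous functions of Hermitian GLT sequences, such as $M_n^{-1/2}$, are again GLT with the composed symbol). You used the similarity only to get realness in item (\ref{spect_conv_IgA}); it is also needed, together with this extra GLT argument, to make the distribution claim in item (\ref{symb_IgA}) rigorous.
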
%
\begin{proof}
	For item \eqref{spect_conv_IgA} see \cite{BBCHS06,PDC}. For item \eqref{symb_IgA}, see \cite[Theorem 10.15]{GS17}.
\end{proof}

About the spectral symbol $\omega_{\eta}$ see again Remark \ref{rem:spatial_spectral_variables}. We have an analogue of Corollary \ref{cor:FD_uniform}.
%\begin{corollary}\label{cor:independence_Galerkin_symbol_BCs}
%The spectral symbol $\omega_{\eta}$ from Theorem \ref{thm:Galerkin_symbol} does not depend on the BCs of Problem \eqref{eq:S-L_general_eig_prob2} and so we have that
%$$ 
%\left\{(n+1)^{-2}\mathcal{L}^{(n+\eta-1,\eta)}_{\textnormal{BCs},p(x),w(x)}\right\}_n\sim_{\lambda}\omega_{\eta}(x,\theta), \qquad (x,\theta) \in [0,1]\times [0,\pi],
%$$ 
%for any separated BCs.
%\end{corollary}
%\begin{proof}
%See \cite[Theorem 10.6]{GS17}.
%\end{proof}
\begin{corollary}\label{cor:Galerkin_uniform}
	For every fixed $\eta$,	the function $f_\eta$ is differentiable, nonnegative, monotone increasing on $[0,\pi]$ and it holds that
	\begin{equation*}
	f_\eta(\theta)\sim \theta^2 \quad \mbox{as }\theta\to 0, \qquad \lim_{\eta \to \infty}\sup_{\theta \in [0,\pi]}\left|f_\eta(\theta) - \theta^2\right|=0.
	\end{equation*}
\end{corollary}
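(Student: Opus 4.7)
The plan is the following. Write $f_\eta(\theta) = h_\eta(\theta)/g_\eta(\theta)$, where $g_\eta$ and $h_\eta$ are the cosine-polynomial symbols of, respectively, the B-spline mass and stiffness Toeplitz blocks produced by the IgA discretization, as explicitly computed in \cite[Chapter 10]{GS17}. Classical properties of cardinal B-splines yield $g_\eta>0$ on $[-\pi,\pi]$ (the B-spline basis is a Riesz basis in $L^2$, hence its Gramian symbol is strictly positive) and $h_\eta\geq 0$ with $h_\eta(0)=0$ (the stiffness block is SPSD, with the constants as a null mode). Differentiability and nonnegativity of $f_\eta$ on $[0,\pi]$ follow immediately.

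The local asymptotic $f_\eta(\theta)\sim\theta^2$ as $\theta\to 0$ is then a direct Taylor computation: the cosine parity of both $h_\eta$ and $g_\eta$ gives $h_\eta'(0)=g_\eta'(0)=0$, and the consistency of the IgA scheme with $-\partial_{xx}$ forces the normalization $h_\eta''(0)=2g_\eta(0)$, whence $f_\eta(\theta)=\theta^2+O(\theta^4)$. Monotonicity on $[0,\pi]$ reduces to checking the sign of $h_\eta'\,g_\eta-h_\eta\,g_\eta'$ via the cosine expansions of $g_\eta$ and $h_\eta$, whose Fourier coefficients equal (up to sign) the integer samples of $\phi_\eta*\phi_\eta$ and $\phi_\eta'*\phi_\eta'$, with $\phi_\eta$ the cardinal B-spline of degree $\eta$. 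In the same spirit of Corollary \ref{cor:FD_uniform}, this reduces to a sine-series positivity statement that can be handled via \cite[Theorem 1]{AS74}.

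The main obstacle is the uniform limit $\sup_{\theta\in[0,\pi]}|f_\eta(\theta)-\theta^2|\to 0$. The plan is to exploit the Poisson-summation representation, which, writing $s(\xi):=\sin(\xi/2)/(\xi/2)$, reads
\begin{equation*}
f_\eta(\theta)\;=\;\frac{\sum_{k\in\mathbb Z}(\theta+2\pi k)^2\,|s(\theta+2\pi k)|^{2(\eta+1)}}{\sum_{k\in\mathbb Z}|s(\theta+2\pi k)|^{2(\eta+1)}},
\end{equation*}
and exhibits $f_\eta(\theta)$ as a weighted average of $\xi^2$ over the aliasing lattice $\theta+2\pi\mathbb Z$. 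Since $|s(\xi)|<1$ for $|\xi|>\pi$, for any fixed $\varepsilon>0$ the $|k|\geq 1$ contributions are geometrically suppressed in $\eta$ by factors of order $(|s(\pi)|/|s(\theta)|)^{2(\eta+1)}$, uniformly in $\theta\in[\varepsilon,\pi]$, producing $f_\eta\to\theta^2$ uniformly on $[\varepsilon,\pi]$. The residual interval $[0,\varepsilon]$ is then closed by monotonicity: $0\leq f_\eta(\theta)\leq f_\eta(\varepsilon)$ and $f_\eta(\varepsilon)\to\varepsilon^2$, so $\sup_{[0,\varepsilon]}|f_\eta-\theta^2|\leq 3\varepsilon^2$ for $\eta$ sufficiently large, and letting $\varepsilon\to 0$ concludes the proof.
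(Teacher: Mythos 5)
The paper does not prove this corollary at all: it simply cites \cite[Theorems 1--2 and Lemma 1]{EFGMSS18}. Your alias (Poisson-summation) representation of $f_\eta$ as the ratio of the stiffness and mass symbols is correct (one checks it reproduces $f_1(\theta)=\tfrac{6(1-\cos\theta)}{2+\cos\theta}$), and it does give nonnegativity, the lower bound $f_\eta(\theta)\geq\theta^2$ on $[0,\pi]$, and the local asymptotics $f_\eta(\theta)\sim\theta^2$ essentially for free. However, there are two genuine gaps. First, the claimed uniform geometric suppression of the $|k|\geq 1$ aliases on $[\varepsilon,\pi]$ is false for $k=-1$ near $\theta=\pi$: writing $s(\xi)=\sin(\xi/2)/(\xi/2)$ one has $s(2\pi-\theta)/s(\theta)=\theta/(2\pi-\theta)$, which tends to $1$ as $\theta\to\pi$, so the factor $\left(\theta/(2\pi-\theta)\right)^{2(\eta+1)}$ is not uniformly small on $[\varepsilon,\pi]$ (indeed at $\theta=\pi$ the aliases $\pm\pi$ carry equal weight). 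The conclusion survives only because the value discrepancy of that alias, $(2\pi-\theta)^2-\theta^2=4\pi(\pi-\theta)$, vanishes exactly where the suppression fails; you must make this cancellation explicit, e.g.\ by bounding $\sup_{\theta\in[0,\pi]}(\pi-\theta)\bigl(\theta/(2\pi-\theta)\bigr)^{2(\eta+1)}\to 0$. (Conversely, on $[0,\varepsilon]$ the suppression argument does work directly, so you do not actually need monotonicity there.)

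Second, and more seriously, the monotonicity of $f_\eta$ is asserted rather than proved. Reducing it to the sign of $h_\eta' g_\eta-h_\eta g_\eta'$ is fine, but the claim that the resulting sine polynomial satisfies the coefficient conditions of \cite[Theorem 1]{AS74} (decreasing positive coefficients with $2k\,a_{2k}\leq(2k-1)a_{2k-1}$) is not checked and is not evident: unlike in Corollary \ref{cor:FD_uniform}, here the coefficients come from a \emph{product} of two cosine polynomials built from samples of $\phi_\eta*\phi_\eta$ and $\phi_\eta'*\phi_\eta'$, and no structure guaranteeing the Askey--Steinig hypotheses is exhibited. This is precisely the nontrivial content of \cite[Theorem 2]{EFGMSS18}, whose proof does not go this route; as written, your argument for monotonicity is a plan, not a proof. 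Everything else (differentiability, positivity of the mass symbol via the Riesz-basis property, the normalization $h_\eta''(0)=2g_\eta(0)$) is sound.
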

\begin{proof}
	See \cite[Theorem 1, Theorem 2 and Lemma 1]{EFGMSS18}.
\end{proof}

With abuse of notation, we will call $\eta$ the \emph{order of approximation} of the IgA method.

\end{document}